\numberwithin{equation}{section}
\newtheorem{thm}{Theorem}[section]
\newtheorem{prop}[thm]{Proposition}
\newtheorem{lemma}[thm]{Lemma}
\newtheorem{cor}[thm]{Corollary}
\theoremstyle{definition}
\newtheorem{defn}[thm]{Definition}
\theoremstyle{remark}
\newtheorem{rmk}[thm]{Remark}
\newtheorem{ex}[thm]{Example}
\newcommand{\C}{\mathbb{C}}
\newcommand{\N}{\mathbb{N}}
\newcommand{\T}{\mathbb{T}}
\newcommand{\Z}{\mathbb{Z}}
\newcommand{\F}{\mathbb{F}}
\newcommand{\tL}{\tilde{\Lambda}}
\newcommand{\lsp}{\operatorname{\mathrm{span}}}
\newcommand{\rank}{\operatorname{\mathrm{rank}}}
\DeclareMathOperator{\KP}{KP}
\DeclareMathOperator{\End}{End}
\newcommand{\red}{{\operatorname{\mathrm{red}}}}
\newcommand{\blue}{{\operatorname{\mathrm{blue}}}}
\def\Ff{\mathcal{F}}
\def\cycle{{\operatorname{\mathrm{cycle}}}}
\def\cycle{{{i}}}
\title[Kumjian-Pask algebras of locally convex higher-rank graphs]{Kumjian-Pask algebras \\ of locally convex higher-rank graphs}
\author{Lisa Orloff Clark}
\author{Claire Flynn}
\author{Astrid an Huef}
\address{Lisa Orloff Clark, Claire Flynn, Astrid an Huef: Department of Mathematics and Statistics, 
University of Otago, P.O. Box 56, Dunedin 9054, New Zealand.}
\thanks{The first author was supported by a University of Otago research grant and the second 
author was supported by a grant from the Division of Sciences at the University of Otago.}
\date{18 April, 2013}
\begin{document}

\begin{abstract} 
\end{abstract}

\subjclass[2000]{16W50}

\keywords{Kumjian-Pask algebra, higher-rank graph, Leavitt path algebra, Cuntz-Krieger algebra, graph algebra}

\begin{abstract}  The Kumjian-Pask algebra of a higher-rank graph generalises the Leavitt path algebra of a directed 
graph.  We extend the definition of Kumjian-Pask algebra to row-finite higher-rank graphs $\Lambda$ with sources which satisfy a 
local-convexity condition. After proving  versions of the graded-uniqueness theorem and the Cuntz-Krieger uniqueness theorem,
we study the Kumjian-Pask algebra of rank-$2$ Bratteli diagrams by studying certain finite subgraphs which are locally convex. 
 We show that the desourcification procedure of Farthing and Webster yields a  row-finite higher-rank graph $\tL$  without sources
 such that the Kumjian-Pask algebras of $\tL$ and $\Lambda$ are Morita equivalent.  We then use the Morita equivalence to study 
the ideal structure of the Kumjian-Pask algebra of $\Lambda$  by pulling the appropriate results across the equivalence.
\end{abstract}

\maketitle

\section{Introduction}
The Kumjian-Pask algebras were introduced in \cite{ACaHR} as higher-rank analogues of
the Leavitt path algebras associated to a directed graph $E$. Since they were introduced in \cite{AA1} and \cite{AMP}, 
the Leavitt path algebras have attracted a lot of attention,  see, for example, \cite{AA2, AA3, AB, AG, AGP, G, GR, Smith, T1, T}.  
A $k$-graph is a category $\Lambda$ with a degree functor $d:\Lambda\to\N^k$ which generalise the path category of $E$ and the 
length function $\lambda\mapsto|\lambda|$ on the paths $\lambda$ of $E$, respectively. Thus we think of the set $\Lambda^0$ of 
objects as vertices and of the morphisms  $\lambda\in\Lambda$ as paths of ``shape'' or degree $d(\lambda)$, and demand that paths 
factor uniquely: if $d(\lambda)=m+n$ in $\N^k$, then there exist unique $\mu,\nu\in\Lambda$ with $d(\mu)=m$ and $d(\nu)=n$ such 
that $\lambda=\mu\nu$.

Let $R$ be a commutative ring with $1$ and $\Lambda$ a row-finite $k$-graph without sources. The authors of \cite{ACaHR} construct 
a graded algebra $\KP_R(\Lambda)$, called the Kumjian-Pask algebra, which is universal for so-called Kumjian-Pask families.  
If $k=1$ then $\KP_R(\Lambda)$ is isomorphic to the Leavitt path algebra $L_R(E)$, 
where $E$ is the directed graph with vertices the objects of $\Lambda$ and edges the paths of degree 1.

Here we define a Kumjian-Pask algebra for row-finite $k$-graphs which may have sources but are ``locally convex'', a condition 
which restricts the types of sources that can occur.  We were motivated by the  study of $C^*$-algebras and Kumjian-Pask  
algebras of ``rank-$2$ Bratteli diagrams'' in \cite{PRRS, ACaHR}.  These Bratteli diagrams are $2$-graphs $\Lambda$ without 
sources, 
but their Kumjian-Pask algebras  can  profitably be studied by looking at finite subgraphs $\Lambda_N$ which have sources but are 
locally convex.  In \cite{ACaHR}, $\KP_{\C}(\Lambda)$ was analysed by embedding it inside  $C^*(\Lambda)$, and then using the 
$C^*$-subalgebras $C^*(\Lambda_N)$ of the subgraphs to deduce results about $\KP_\C(\Lambda)$. A sample theorem obtained 
in this way illustrates how the dichotomy for Leavitt path algebras of \cite[Theorem~4.5]{AA3} does not hold for Kumjian-Pask algebras.  
In particular, \cite[Theorem~7.10]{ACaHR} gives a class $\mathcal C$ of  rank-$2$ Bratteli diagrams such that 
for each $\Lambda \in \mathcal C$,  $\KP_\C(\Lambda)$ is 
simple but is neither purely infinite nor locally matricial.  A key performance indicator for our new definition of the
 Kumjian-Pask algebra was an extension of \cite[Theorem~7.10]{ACaHR} to arbitrary fields, and this is achieved in 
Theorem~\ref{thm-point} below.   For more motivation,  the K-theory of the $C^*$-algebras of these rank-$2$ Bratteli 
diagrams was computed in \cite{PRRS} as a direct limit of the K-groups of suitable $C^*(\Lambda_N)$, and a similar 
approach should work to compute the algebraic  K-theory of $\KP_R(\Lambda)$.

Now let $\Lambda$ be a locally convex, row-finite $k$-graph.   After finding the appropriate notion of Kumjian-Pask 
family of $\Lambda$ in this setting, we have obtained a new Kumjian-Pask algebra $\KP_R(\Lambda)$ with a very 
satisfactory theory: $\KP_R(\Lambda)$ is generated by a universal
Kumjian-Pask family $(p,s)$ and the properties of $(p,s)$ ensure that $\KP_R(\Lambda)=\lsp\{s_\lambda s_{\mu^*}:\lambda,\mu\in \Lambda\}$ (see \S\ref{sec-family}),   
and there are versions of both the graded-uniqueness and the Cuntz-Krieger uniqueness theorems (see \S\ref{sec-uniquenessthms}).  
Then, several applications of the graded-uniqueness theorem shows that for $\Lambda$ in the class $\mathcal C$ mentioned above, $\KP_R(\Lambda)$ is neither purely infinite nor locally matricial (see \S\ref{sec-Bratteli}).

There is a construction  by Farthing \cite{F} and Webster \cite{W}, called the desourcification of $\Lambda$,
 which yields a row-finite $k$-graph $\tL$ without sources such that the $C^*$-algebras $C^*(\tL)$ and $C^*(\Lambda)$
 are Morita equivalent.  In \S\ref{sec-Morita}, we show that $\KP_R(\tL)$ and $\KP_R(\Lambda)$ are Morita equivalent
 as well.  This result is new even when $k=1$ and  $\Lambda$ is the path category of a row-finite directed graph.  
In \S\S\ref{sec-simplicity}-\ref{sec-ideals} we  study the ideal structure of $\KP_R(\Lambda)$ by pulling the 
relevant results for $\KP_R(\tL)$ from \cite{ACaHR} across the Morita equivalence. Thus we obtain graph-theoretic 
characterisations of basic simplicity and simplicity, and show that there is a lattice isomorphism between the 
graded basic ideals of $\KP_R(\Lambda)$ and the saturated, hereditary subsets of $\Lambda^0$.

\section{Preliminaries}
We write $\N$ for the set of non-negative integers. We view $\N$ as a category with one object.  Fix $k\in \N \setminus \{0\}$.  
We often write $n \in \N^k$ as $(n_1, \dots, n_k)$, and say $m \leq n $ in $\N^k$ if and only if $m_i \leq n_i$ for
all $1\leq i \leq k$.   We use  $e_i$ for the usual basis elements in $\N^k$, so that $e_i$ is $1$ in the $i$th coordinate and $0$ in the others.  We denote the join and meet in $\N^k$ by $\vee$ and $\wedge$ respectively.

A \emph{$k$-graph} $(\Lambda, d)$  is a countable category $\Lambda$
with a functor $d: \Lambda \to \N^k$, called the \emph{degree map},  satisfying the \emph{factorisation property}:
if $d(\lambda) = m + n$ for some $m,n\in\N^k$, then there exist unique  $\mu$ and $\nu$ in $\Lambda$ such that
$d(\mu)=m, d(\nu)=n$ and $\lambda =\mu\nu$.  In this case, we often write $\lambda(0,m)$ for $\mu$. 

We denote the set of objects in $\Lambda$ by $\Lambda^0$ and use the factorisation property to identify  the morphisms $d^{-1}(0)$ of degree $0$ and  $\Lambda^0$.  We write $r$ and $s$  for the domain and codomain maps from $\Lambda$ to $\Lambda^0$.  
The path category associated to a directed graph is a $1$-graph, and motivated by this we call $r$ and $s$ the range and source maps, and elements of $\Lambda$ and $\Lambda^0$ paths and vertices, respectively.
For  $v \in \Lambda^0$ and $m \in \N^k$, we write 
\begin{align*}\Lambda^m &:= \{\lambda \in \Lambda : d(\lambda) = m\} \quad\text{and}\quad \Lambda^{\neq 0}:= \Lambda \setminus \Lambda^0,\\
 v \Lambda &:= \{\lambda \in \Lambda : r(\lambda) = v\} \quad\text{(this is denoted  $\Lambda(v)$ in \cite{RSY03})},\\
v \Lambda^m &:= \Lambda^m \cap v\Lambda.
\end{align*}

\begin{ex}
Fix   $m \in (\N \cup \{\infty\})^k$ and define 
 \[\Omega_{k,m}:= \{(p,q) \in \N^k \times \N^k :  p \leq q \leq m \}.
 \]
This is a category with objects
\[\Omega_{k,m}^0 =\{p \in \N^k \mid p \leq m\},\] 
and range and source maps $r(p,q)= p$ and $s(p,q) =q$.
Paths $(p,q)$ and $(r,s)$ are
composable if and only if $q=r$, and then $(p,q)(q,s) = (p,s)$.
With
$d:\Omega_{k,m} \to \N^k$ defined by $d((p,q)) = q-p$, the pair $(\Omega_{k,m}, d)$
is  a $k$-graph.  
We  write  $\Omega_k$ when  $m$ is infinite in every coordinate.
\end{ex}

Let $\Lambda$ be a $k$-graph.  
Then $\Lambda$ is \emph{row-finite} if $v\Lambda^n$ is finite for every $v\in\Lambda^0$ and $n\in\N^k$.   
A vertex $v\in\Lambda^0$ is a \emph{source} if there exists $n\in\N^k$ such that $v\Lambda^n=\emptyset$, 
that is, $v$ receives no paths of degree $n$.    We say  $\Lambda$ is \emph{locally convex}\label{lc} if for every $v\in \Lambda^0$, $1 \leq i,j \leq k$ with $i \neq j$,  $ \lambda \in v\Lambda^{e_i}$ and $\mu \in v\Lambda^{e_j}$, the sets
$s(\lambda)\Lambda^{e_j}$ and $s(\mu)\Lambda^{e_i}$ are nonempty \cite[Definition~3.10]{RSY03}.
Thus if $\Lambda$ has no sources, then $\Lambda$ is locally convex.  In this paper we only consider locally convex, row-finite $k$-graphs.

\subsection*{Paths, infinite paths and boundary paths}    
 The Cuntz-Krieger relation (KP4) for $k$-graphs without sources (see Section~\ref{sec-family}) involves the 
sets $v\Lambda^n$ of paths of degree $n$ with 
range $v$. When $\Lambda$ has sources, $v\Lambda^n$ could be empty.  The technical innovation in \cite{RSY03}
 is to introduce the set $\Lambda^{\leq n}$ consisting of paths $\lambda$ with $d(\lambda)\leq n$ which cannot be extended to paths $\lambda\mu$ with $d(\lambda)<d(\lambda\mu)\leq n$. Thus
\begin{equation*}
\Lambda^{\leq n}:=\{\lambda\in\Lambda: d(\lambda)\leq n, \text{\ and\ }d(\lambda)_i<n_i \text{\ implies\ }s(\lambda)\Lambda^{e_i}=\emptyset\},
\end{equation*}
 and then  $v\Lambda^{\leq n} := v\Lambda \cap \Lambda^{\leq n}$ for $v \in \Lambda^0$
is always nonempty.
For example, if $n=e_i$ for some $1\leq i \leq k$, then 
\[
v\Lambda^{\leq e_i} = \begin{cases} v\Lambda^{e_i} & \text{ if }v\Lambda^{e_i} \neq \emptyset;\\
 \{v\} & \text{ otherwise.}\end{cases}
\]

A $k$-graph morphism is a degree-preserving functor.  An \emph{infinite path} in a $k$-graph $\Lambda$ is a $k$-graph morphism $x:\Omega_k\to \Lambda$.
In a graph with sources, not every finite path is contained in an infinite path, and 
another technical innovation of \cite{RSY03} is to replace the space $\Lambda^\infty$
of infinite paths with a space of so-called  boundary paths.

Let $\Lambda$ be  a locally convex, row-finite $k$-graph and  $m \in (\N \cup \{\infty\})^k$.  In Definition~3.14 of \cite{RSY03}, a graph morphism
$x: \Omega_{k,m} \to \Lambda$ is defined to be a \emph{boundary path} of degree $m$  
if 
\begin{equation}
 \label{eq:boundarypath}
v \in  \Omega_{k,m}^0 \text{ and }  v\Omega_{k,m}^{\leq e_i} = \{v\} \text{ imply }
x(v)\Lambda^{\leq e_i} = \{x(v)\}.
\end{equation}  
Thus a  boundary path  maps sources to sources, and every infinite path is a boundary path.  We denote the set of boundary paths by $\Lambda^{\leq \infty}$. If $\Lambda$ has no sources, then $\Lambda^{\leq\infty}=\Lambda^\infty$. 
Since we identify the object $n\in \Omega_{k,m}$ with the identity morphism $(n,n)$ at $n$, we write $x(n)$ for the vertex $x(n,n)$. Then the range of a boundary path $x$ is the vertex $r(x):=x(0)$. We set  $v\Lambda^{\leq \infty}:=\Lambda^{\leq \infty}\cap r^{-1}(v)$ and $v\Lambda^{\infty}:=\Lambda^{\infty}\cap r^{-1}(v)$.  A boundary path $x$ is completely determined by the set of paths $\{x(0,n): n\leq d(x)\}$,  hence can be composed with finite paths, and there is a converse factorisation property.  We denote by $\sigma$ the partially-defined shift map on $\Lambda^{\leq \infty}$ which
 is defined by $\sigma^m(x)=x(m,\infty)$ for $x\in \Lambda^{\leq \infty}$ when $m\leq d(x)$.

\section{Kumjian-Pask $\Lambda$-families}\label{sec-family} Throughout this section, $\Lambda$ is a row-finite $k$-graph 
and $R$ is a commutative ring with $1$.
Define  $G(\Lambda):= \{\lambda^* : \lambda \in \Lambda\}$, and call each
$\lambda^*$  a \emph{ghost path}. If $v\in\Lambda^0$, then we identify $v$ and $v^*$. 
We extend the degree functor $d$ and the range and source maps $r$ and $s$ to $G(\Lambda)$ by
\[
 d(\lambda^*) = -d(\lambda),\quad r(\lambda^*) = s(\lambda) \quad\text{ and }\quad s(\lambda^*)=r(\lambda).
\]
We extend the factorisation property to the ghost paths by setting $(\mu\lambda)^* = \lambda^*\mu^*$.  We denote by $G(\Lambda^{\neq 0})$ the set of ghost paths that are not vertices.

Let $\Lambda$ be a row-finite $k$-graph without sources.  Recall from \cite[Definition~3.1]{ACaHR} that a
 \emph{Kumjian-Pask $\Lambda$-family $(P,S)$ \label{oldKPrelations}
in an $R$-algebra $A$} consists of two functions $P:\Lambda^0\to A$ and 
$S:\Lambda^{\not=0}\cup G(\Lambda^{\not=0})\to A$ such that\label{KPwithoutsources}
\begin{enumerate}
\item[(KP1)] $\{P_v:v\in \Lambda^0\}$ is an  orthogonal set of idempotents in the sense 
that $P_vP_w=\delta_{v,w}P_v$,
\item[(KP2)] for all $\lambda, \mu\in\Lambda^{\neq 0}$ with $r(\mu) = s(\lambda)$, we have
\[
S_{\lambda}S_{\mu} = S_{\lambda\mu}, \; S_{\mu^*}S_{\lambda^*} = S_{(\lambda\mu)^*}, \;
 P_{r(\lambda)}S_{\lambda} = S_{\lambda} = S_{\lambda}P_{s(\lambda)}, \;
  P_{s(\lambda)}S_{\lambda^*} = S_{\lambda^*} = S_{\lambda^*}P_{r(\lambda)},
\]
\item[(KP3)] for all $\lambda, \mu \in\Lambda^{\neq 0}$ with $d(\lambda) = d(\mu)$, we have
\[
S_{\lambda^*}S_{\mu} = \delta_{\lambda,\mu}P_{s(\lambda)},
\]
\item[(KP4)] for all $v\in\Lambda^0$ and all $n\in {\mathbb N}^k\setminus \{0\}$, we have
\[
P_v = \sum_{\lambda\in v\Lambda^n} S_{\lambda}S_{\lambda^*}.
\]
\end{enumerate} 

The sum in (KP4) is finite because $\Lambda$ is row-finite. 
The  relations (KP1)--(KP4) were obtained in \cite[\S3]{ACaHR} by adding to the usual 
Cuntz-Krieger relations from \cite{KumPas00}.  A Cuntz-Krieger $\Lambda$-family $(P,S)$  
in the algebra $B(H)$  of linear bounded operators on a Hilbert space $H$ consists of an
 orthogonal  set $\{P_v\}$ of projections  and a set  $\{S_\lambda\}$  of partial isometries 
$S_\lambda$ with initial projection $P_{s(\lambda)}$, satisfying weaker versions of  (KP1)--(KP4).
The geometric structure of $B(H)$ is rich enough  to yield (KP1)--(KP4) as given above, where 
$S_{\lambda^*}$  is the Hilbert space adjoint of $S_\lambda$. For example, the Cuntz-Krieger relation 
corresponding to 
(KP2) is just $S_{\lambda}S_{\mu}= S_{\lambda\mu}$; the rest of (KP2) comes for free.   
See \S3 of \cite{ACaHR} for more detail. \label{page5}

An important consequence of the  Kumjian-Pask relations is that the algebra generated by a 
Kumjian-Pask $\Lambda$-family $(P,S)$ is $\lsp_R\{S_\lambda S_{\mu^*}:\lambda,\mu\in \Lambda\}$, 
where we use the convention that $S_v:=P_v$ and $S_{v^*}:=P_{v}$ for $v\in \Lambda^0$. When  $\Lambda$
 has no sources, this follows from \cite[Lemma~3.3]{ACaHR}, which says that if $n\geq d(\lambda), d(\mu)$, then 
\begin{equation}\label{eq-span}
S_{\lambda^*}S_{\mu} = \sum_{d(\lambda\alpha)=n,\;\lambda\alpha = \mu\beta}S_{\alpha}S_{\beta^*}.
\end{equation}
Definition~\ref{KPsources} below gives a notion of Kumjian-Pask family which applies to $k$-graphs 
$\Lambda$ with sources. It is based on the approach by Raeburn, Sims and Yeend in \cite{RSY03} for 
Cuntz-Krieger $\Lambda$-families. The purpose of the new relations (KP3$'$) and (KP4$'$) is to ensure 
that we obtain a version of \eqref{eq-span}. 

\begin{defn}\label{KPsources}
Let $\Lambda$ be a row-finite $k$-graph (possibly with sources).
A Kumjian-Pask $\Lambda$-family $(P,S)$ 
in an $R$-algebra $A$ consists of two functions $P:\Lambda^0 \to A$ and 
$S:\Lambda^{\neq0}\cup G(\Lambda^{\neq 0}) \to A$ such that (KP1) and (KP2) hold, and 
\begin{enumerate}
\item[(KP3$'$)] for all $n \in \N^k\setminus\{0\}$ and 
$\lambda, \mu \in \Lambda^{\leq n}$, we have \[S_{\lambda^{*}}S_\mu=\delta_{\lambda,\mu}P_{s(\lambda)};\]
\item[(KP4$'$)] for all $v \in \Lambda^0$ and $n \in \N^k\setminus \{0\}$,  
\begin{equation*}
P_v=\sum_{\lambda \in v\Lambda^{\leq n}}S_\lambda S_{\lambda^{*}}.
\end{equation*}
\end{enumerate}
\end{defn}

Since (KP4$'$) is  the same as the fourth Cuntz-Krieger relation
of \cite[Definition~3.3]{RSY03},
we get the following.

\begin{lemma}[\text{\cite[Proposition~3.11]{RSY03}}]\label{lem-kp4alt}
Let $\Lambda$ be a locally convex, row-finite $k$-graph.  Then (KP4$'$) 
holds at $v\in\Lambda^0$ if and only if, for $1\leq i\leq k$ with
$v\Lambda^{e_i} \neq \emptyset$, $P_v=\sum_{\lambda \in v\Lambda^{e_i}}S_\lambda S_{\lambda^{*}}$.
\end{lemma}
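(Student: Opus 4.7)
The proof splits into a straightforward direction and an inductive converse. The forward direction is immediate: if (KP4$'$) holds at $v$ for every $n \in \N^k \setminus \{0\}$ and $v\Lambda^{e_i} \neq \emptyset$, specialise to $n = e_i$ and recall from the displayed formula that $v\Lambda^{\leq e_i} = v\Lambda^{e_i}$ in this case.

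For the converse, I would fix $v \in \Lambda^0$ such that $P_v = \sum_{\lambda \in v\Lambda^{e_i}} S_\lambda S_{\lambda^*}$ whenever $v\Lambda^{e_i} \neq \emptyset$, and prove (KP4$'$) at $v$ by induction on $|n| := n_1 + \cdots + n_k$. For the base case $n = e_i$ the ``nonempty'' case is hypothesis, while if $v\Lambda^{e_i} = \emptyset$ then $v\Lambda^{\leq e_i} = \{v\}$, so the sum collapses to $S_v S_{v^*} = P_v$ by our convention. For the inductive step, write $n = n' + e_i$ with $n_i \geq 1$, apply the hypothesis to rewrite $P_v$ as a sum over $v\Lambda^{\leq n'}$, use (KP2) to slip $P_{s(\lambda)}$ between $S_\lambda$ and $S_{\lambda^*}$, and then expand each $P_{s(\lambda)}$ using the $e_i$-version of the base case at $s(\lambda)$. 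A second application of (KP2) converts each resulting product into $S_{\lambda\mu}S_{(\lambda\mu)^*}$ (with the convention $\lambda s(\lambda) = \lambda$) and yields
\[
P_v = \sum_{\lambda \in v\Lambda^{\leq n'}}\ \sum_{\mu \in s(\lambda)\Lambda^{\leq e_i}} S_{\lambda\mu} S_{(\lambda\mu)^*}.
\]

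The main obstacle, and the step where local convexity is essential, is verifying that the map $(\lambda,\mu) \mapsto \lambda\mu$ is a bijection from the indexing set above onto $v\Lambda^{\leq n}$. Surjectivity with uniqueness is handled by inspecting an $\alpha \in v\Lambda^{\leq n}$: if $d(\alpha)_i = n_i$ factor out its last $e_i$-coordinate; if $d(\alpha)_i < n_i$, then $s(\alpha)\Lambda^{e_i} = \emptyset$ by definition of $\Lambda^{\leq n}$, and the pair is $(\alpha, s(\alpha))$. For the forward check that $\lambda\mu \in v\Lambda^{\leq n}$, the delicate subcase is $d(\mu) = e_i$ and an index $j \neq i$ with $d(\lambda\mu)_j < n_j$: this forces $s(\lambda)\Lambda^{e_j} = \emptyset$ from $\lambda \in v\Lambda^{\leq n'}$, and we must deduce $s(\mu)\Lambda^{e_j} = \emptyset$. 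Here I would use the factorisation property combined with local convexity: any $\nu \in s(\mu)\Lambda^{e_j}$ would make $\mu\nu$ a path of degree $e_i + e_j$ at $s(\lambda)$, which factors through some element of $s(\lambda)\Lambda^{e_j}$, contradicting its emptiness.

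Once the bijection is in place, the inductive step reads $P_v = \sum_{\alpha \in v\Lambda^{\leq n}} S_\alpha S_{\alpha^*}$, completing the induction. The whole argument is essentially the translation of \cite[Proposition~3.11]{RSY03} from Cuntz-Krieger families to Kumjian-Pask families, valid because (KP4$'$) has the same form as the fourth Cuntz-Krieger relation and (KP2) supplies the multiplicative/compatibility properties used at each step.
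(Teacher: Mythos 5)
Your overall strategy---induction on $|n|$ after re-indexing $v\Lambda^{\leq n}$ by pairs $(\lambda,\mu)$ with $\lambda\in v\Lambda^{\leq n-e_i}$ and $\mu\in s(\lambda)\Lambda^{\leq e_i}$---is the right one; it is essentially the standard proof of \cite[Proposition~3.11]{RSY03}, which the paper does not reprove but simply cites after observing that (KP4$'$) is the fourth Cuntz--Krieger relation. However, there is a genuine gap, and it sits exactly where local convexity has to be used. In your surjectivity step you factor $\alpha\in v\Lambda^{\leq n}$ with $d(\alpha)_i=n_i$ as $\lambda\mu$ with $d(\mu)=e_i$, but you never verify that the truncation $\lambda$ lies in $v\Lambda^{\leq n-e_i}$. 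For $j\neq i$ with $d(\lambda)_j<n_j$, membership of $\alpha$ in $\Lambda^{\leq n}$ only gives $s(\mu)\Lambda^{e_j}=\emptyset$, whereas you need $s(\lambda)\Lambda^{e_j}=\emptyset$; passing from the one to the other is precisely an application of local convexity at $s(\lambda)$ (to $\mu\in s(\lambda)\Lambda^{e_i}$ and a putative element of $s(\lambda)\Lambda^{e_j}$), and it is not automatic. For instance, in the non-locally-convex $2$-graph with vertices $v,w,u$ and exactly one morphism $\lambda_0$ of degree $e_1$ with $r(\lambda_0)=v$, $s(\lambda_0)=w$, and one morphism $\mu_0$ of degree $e_2$ with $r(\mu_0)=v$, $s(\mu_0)=u$ (no morphisms of nonzero mixed degree, so the factorisation property is vacuous), the path $\mu_0$ lies in $v\Lambda^{\leq(1,1)}$ but its truncation $v$ is not in $v\Lambda^{\leq(1,0)}$, so the map $(\lambda,\mu)\mapsto\lambda\mu$ is not onto $v\Lambda^{\leq(1,1)}$. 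By contrast, the subcase you single out as ``where local convexity is essential''---that $\lambda\mu\in v\Lambda^{\leq n}$---is settled by your own argument using only the factorisation property; local convexity is never actually invoked there, which should have been a warning that the hypothesis had not yet entered the proof.

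A second, smaller point concerns quantifiers: as you set up the converse (the $e_i$-relations assumed only at the fixed vertex $v$), the inductive step that expands $P_{s(\lambda)}$ ``using the base case at $s(\lambda)$'' is not available, since $s(\lambda)\neq v$ in general, and the per-vertex biconditional is in fact false. Already for the $1$-graph with edges $e$ ($r(e)=v$, $s(e)=w$) and $f$ ($r(f)=w$, $s(f)=u$), the shift operators on the free module whose basis is all paths other than the vertex path $v$ satisfy (KP1)--(KP3$'$) and $P_v=S_eS_{e^*}$, yet $S_{ef}S_{(ef)^*}$ annihilates the basis element $e$ while $P_v$ does not, so (KP4$'$) fails at $v$ for $n=2$. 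The lemma, like \cite[Proposition~3.11]{RSY03} and every application of it in this paper, must be read with the $e_i$-relations imposed at every vertex; with that reading, and with the missing truncation argument supplied as above, your induction does prove the statement.
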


The next lemma gives a version of  \eqref{eq-span}.

\begin{prop}
\label{prop:lspfamily}
Let $\Lambda$ be a locally convex, row-finite $k$-graph, $(P,S)$
a Kumjian-Pask $\Lambda$-family in an $R$-algebra $A$, and $\lambda,\mu \in \Lambda$.
If $n\in \N^k$  such that $d(\lambda),d(\mu) \leq n$, 
then 
\[
S_{\lambda^{*}}S_\mu=\sum_{\lambda\alpha=\mu\beta, 
\lambda\alpha \in \Lambda^{\leq n}} S_\alpha S_{\beta^{*}}.
\]
\end{prop}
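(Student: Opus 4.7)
The plan is to reduce $S_{\lambda^{*}}S_\mu$ to a sum over common extensions in $\Lambda^{\leq n}$ by iterating (KP4$'$) and then collapsing cross terms with (KP3$'$).

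Step 1 (vertex reduction). Applying (KP2) gives $S_{\lambda^{*}} = S_{\lambda^{*}}P_{r(\lambda)}$ and $S_\mu = P_{r(\mu)}S_\mu$, so both sides vanish when $r(\lambda) \neq r(\mu)$; on the right, any composable pair $\lambda\alpha = \mu\beta$ already forces $r(\lambda) = r(\mu)$. I may therefore assume $r(\lambda)=r(\mu)$.

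Step 2 (auxiliary identity). I would first prove that for every $\eta \in \Lambda$ with $d(\eta) \leq n$,
\[
S_\eta S_{\eta^{*}} = \sum_{\eta\gamma \in \Lambda^{\leq n}} S_{\eta\gamma}S_{(\eta\gamma)^{*}}.
\]
When $d(\eta)=n$ the sum reduces to the single term with $\gamma = s(\eta)$, because $\eta$ lies in $\Lambda^{\leq d(\eta)}$ automatically. Otherwise $n - d(\eta) \neq 0$, so (KP4$'$) applies at $s(\eta)$ and (KP2) gives
\[
S_\eta S_{\eta^{*}} = S_\eta P_{s(\eta)} S_{\eta^{*}} = \sum_{\tau \in s(\eta)\Lambda^{\leq n-d(\eta)}} S_{\eta\tau}S_{(\eta\tau)^{*}}.
\]
It remains to match the index set with $\{\gamma : \eta\gamma \in \Lambda^{\leq n}\}$: the condition $d(\tau)_i < (n-d(\eta))_i$ is equivalent to $d(\eta\tau)_i < n_i$, and $s(\eta\tau) = s(\tau)$, so the defining source-emptiness conditions for $\Lambda^{\leq n-d(\eta)}$ at $s(\eta)$ and for $\Lambda^{\leq n}$ at $r(\eta)$ agree along extensions of $\eta$.

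Step 3 (main computation). Since $\lambda$ is trivially in $\Lambda^{\leq d(\lambda)}$, (KP3$'$) (or (KP1) when $d(\lambda)=0$) gives $S_{\lambda^{*}}S_\lambda = P_{s(\lambda)}$, and likewise $S_{\mu^{*}}S_\mu = P_{s(\mu)}$. Rewriting
\[
S_{\lambda^{*}}S_\mu = P_{s(\lambda)} S_{\lambda^{*}}S_\mu P_{s(\mu)} = S_{\lambda^{*}}\bigl(S_\lambda S_{\lambda^{*}}\bigr)\bigl(S_\mu S_{\mu^{*}}\bigr)S_\mu
\]
and expanding both middle factors via Step 2 yields a double sum whose cross terms $S_{(\lambda\alpha)^{*}}S_{\mu\beta}$ lie in the scope of (KP3$'$) at degree $n$ and therefore collapse to $\delta_{\lambda\alpha,\mu\beta}P_{s(\lambda\alpha)}$. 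Factoring $S_{\lambda\alpha}=S_\lambda S_\alpha$ and $S_{(\mu\beta)^{*}}=S_{\beta^{*}}S_{\mu^{*}}$ and absorbing the resulting vertex projections with (KP2) reduces each surviving summand $S_{\lambda^{*}}S_\lambda S_\alpha P_{s(\lambda\alpha)} S_{\beta^{*}}S_{\mu^{*}}S_\mu$ to $S_\alpha S_{\beta^{*}}$, giving the claimed formula.

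The main obstacle is the bijection in Step 2 between $s(\eta)\Lambda^{\leq n-d(\eta)}$ and $\{\gamma : \eta\gamma \in \Lambda^{\leq n}\}$; once that is in hand, everything else is routine manipulation of the Kumjian-Pask relations.
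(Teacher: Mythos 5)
Your proof is correct and follows essentially the same route as the paper's: both insert the (KP4$'$) resolutions of $P_{s(\lambda)}$ and $P_{s(\mu)}$ at degrees $n-d(\lambda)$ and $n-d(\mu)$ and then collapse the cross terms $S_{(\lambda\alpha)^*}S_{\mu\beta}$ with (KP3$'$). The only cosmetic difference is that where the paper cites \cite[Lemma~3.6]{RSY03} to identify $s(\lambda)\Lambda^{\leq n-d(\lambda)}$ with $\{\alpha:\lambda\alpha\in\Lambda^{\leq n}\}$, you verify this identification (correctly) directly from the definition of $\Lambda^{\leq n}$, and you handle the degenerate case $d(\lambda)=n$ of (KP4$'$) explicitly.
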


\begin{proof}
Fix $n\in \N^k$  such that $d(\lambda),d(\mu) \leq n$. Then
\begin{align*}
S_{\lambda^{*}}S_\mu &= (P_{s(\lambda)}S_{\lambda^{*}})(S_\mu P_{s(\mu)})
 \quad  \text{ by (KP2)}\\
&=\Big( \sum_{\alpha \in s(\lambda)\Lambda^{\leq n-d(\lambda)}}
 S_\alpha S_{\alpha^{*}}\Big)S_{\lambda^{*}}S_\mu \Big( \sum_{\beta 
\in s(\mu)\Lambda^{\leq n-d(\mu)}} S_\beta S_{\beta^{*}}\Big) \quad \text{ by (KP4$'$)}\\
&= \sum_{\alpha \in s(\lambda)\Lambda^{\leq n-d(\lambda)}} \sum_{\beta \in s(\mu)\Lambda^{\leq n-d(\mu)}} 
S_\alpha S_{(\lambda\alpha)^{*}}
S_{\mu\beta} S_{\beta^{*}} \quad \text{ by (KP2)}
\\
&= \sum_{\alpha \in s(\lambda)\Lambda^{\leq n-d(\lambda)}} 
\sum_{\beta \in s(\mu)\Lambda^{\leq n-d(\mu)},\lambda\alpha=\mu\beta}S_\alpha P_{s(\mu\beta)} S_{\beta^{*}}
\intertext{by applying  (KP3$'$) to each summand. By unique factorisation, for each $\alpha$ there is just one $\beta$ of the given degree such that $\lambda\alpha=\mu\beta$, and the sums collapse to}
&= \sum_{\alpha \in s(\lambda)\Lambda^{\leq n-d(\lambda)}, \lambda\alpha=\mu\beta} 
S_\alpha  S_{\beta^{*}}
\end{align*}
This proves the lemma after noting that the purely graph-theoretic result \cite[Lemma~3.6]{RSY03} says that composing $\lambda$ with $\alpha\in s(\lambda)\Lambda^{\leq n-d(\lambda)}$ gives the path $\lambda\alpha\in\Lambda^{\leq n}$.
\end{proof}

\begin{cor}\label{cor-span} Let $\Lambda$ be a locally convex, row-finite $k$-graph and $(P,S)$
a Kumjian-Pask $\Lambda$-family in an $R$-algebra $A$. The subalgebra generated by $(P,S)$ is
$\lsp\{S_\alpha S_{\beta^*}:\alpha,\beta\in\Lambda, s(\alpha)=s(\beta)\}$.
\end{cor}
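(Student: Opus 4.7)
The plan is to prove two containments. For the easy direction (the span sits inside the subalgebra generated by $(P,S)$), I observe that, with the convention $S_v=S_{v^*}=P_v$, one has $P_v=S_vS_{v^*}$; moreover, for $\lambda\in\Lambda^{\neq 0}$, (KP2) gives $S_\lambda=S_\lambda S_{s(\lambda)^*}$ and $S_{\lambda^*}=S_{s(\lambda)}S_{\lambda^*}$. In each expression the source-matching condition $s(\alpha)=s(\beta)$ holds, so the span contains all the generators of the subalgebra.

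For the reverse direction, it suffices to show that the set $\{S_\alpha S_{\beta^*}:s(\alpha)=s(\beta)\}$ is closed under multiplication, up to $R$-linear combinations. Take $\alpha,\beta,\gamma,\delta\in\Lambda$ with $s(\alpha)=s(\beta)$ and $s(\gamma)=s(\delta)$, and bracket the product as $S_\alpha(S_{\beta^*}S_\gamma)S_{\delta^*}$. Choose $n\in\N^k\setminus\{0\}$ with $n\geq d(\beta)\vee d(\gamma)$, and apply Proposition~\ref{prop:lspfamily} to the middle factor to obtain
\[
S_{\beta^*}S_\gamma = \sum_{\beta\alpha'=\gamma\beta',\ \beta\alpha'\in\Lambda^{\leq n}} S_{\alpha'}S_{\beta'^*}.
\]
For each index in the sum, $r(\alpha')=s(\beta)=s(\alpha)$ and $r(\beta')=s(\gamma)=s(\delta)$, so the compositions $\alpha\alpha'$ and $\delta\beta'$ are well-defined paths in $\Lambda$, and (KP2) collapses each outer summand to $S_{\alpha\alpha'}S_{(\delta\beta')^*}$. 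The equality $\beta\alpha'=\gamma\beta'$ forces $s(\alpha')=s(\beta')$, whence $s(\alpha\alpha')=s(\delta\beta')$, so every resulting summand again lies in the spanning set.

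The only obstacle is the routine bookkeeping: propagating source-matching through the expansion, and ensuring that the $n$ fed into Proposition~\ref{prop:lspfamily} is nonzero so that (KP4$'$) is truly available in its invocation. Both are easy to arrange by taking $n=(d(\beta)\vee d(\gamma))\vee e_1$. Combining the two containments then identifies the subalgebra generated by $(P,S)$ with $\lsp\{S_\alpha S_{\beta^*}:\alpha,\beta\in\Lambda,\ s(\alpha)=s(\beta)\}$.
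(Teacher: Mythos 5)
Your proof is correct and essentially the same as the paper's, which likewise deduces the result from Proposition~\ref{prop:lspfamily} (applied to the middle factor $S_{\beta^*}S_\gamma$) together with (KP2) to recombine the outer factors, the paper merely observing in addition that $S_\alpha S_{\beta^*}=0$ whenever $s(\alpha)\neq s(\beta)$ by (KP1)--(KP2) rather than propagating the source-matching condition as you do. One small mislabel: your verification that the generators lie in the span belongs to the containment of the subalgebra in the span (together with closure under multiplication), not to the genuinely easy containment of the span in the subalgebra, which is immediate since each $S_\alpha S_{\beta^*}$ is a product of elements of the family.
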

\begin{proof} We have $S_\alpha S_{\beta^*}=S_\alpha P_{s(\alpha)}P_{s(\beta)}S_{\beta^*}$
 by (KP2), so $S_\alpha S_{\beta^*}=0$ unless $s(\alpha)=s(\beta)$ by (KP1).  The result now 
follows from Proposition~\ref{prop:lspfamily} and (KP2).
\end{proof}

The set of \emph{minimal common  extensions of $\lambda,\mu\in\Lambda$} is
\[
\Lambda^{\min}(\lambda,\mu):=\{ (\alpha,\beta): \lambda\alpha=\mu\beta, d(\lambda\alpha)=d(\lambda)\vee d(\mu)\}.
\]
\label{Lambdamin}

\begin{cor}\label{cor-altKP3'}
Let $\Lambda$ be a locally convex, row-finite $k$-graph and $(P,S)$
a  family in an $R$-algebra $A$ satisfying (KP1), (KP2) and (KP4$'$). Then (KP3$'$) holds if and 
only if, for all  $\lambda,\mu \in \Lambda$, 
\begin{equation}\label{KP3'alt}
S_{\lambda^{*}}S_\mu=\sum_{(\alpha,\beta)\in\Lambda^{\min}(\lambda,\mu)}
S_\alpha S_{\beta^{*}}.
\end{equation}
\end{cor}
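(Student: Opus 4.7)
The plan is to prove the equivalence in two stages, using Proposition~\ref{prop:lspfamily} directly for the forward implication and a careful degree-analysis for the reverse.

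For the forward direction, I would assume (KP3$'$) and apply Proposition~\ref{prop:lspfamily} with the specific choice $n:=d(\lambda)\vee d(\mu)$, which satisfies the hypothesis $d(\lambda),d(\mu)\leq n$. This gives
\[
S_{\lambda^{*}}S_\mu=\sum_{\lambda\alpha=\mu\beta,\;\lambda\alpha\in\Lambda^{\leq n}}S_\alpha S_{\beta^{*}},
\]
and the task reduces to showing that the index set here coincides with $\Lambda^{\min}(\lambda,\mu)$. For inclusion ``$\subseteq$'', observe that if $\lambda\alpha=\mu\beta$ then $d(\lambda\alpha)\geq d(\lambda)$ and $d(\lambda\alpha)=d(\mu\beta)\geq d(\mu)$, so $d(\lambda\alpha)\geq d(\lambda)\vee d(\mu)=n$; combined with $\lambda\alpha\in\Lambda^{\leq n}$ this forces $d(\lambda\alpha)=n$. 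For ``$\supseteq$'', if $(\alpha,\beta)\in\Lambda^{\min}(\lambda,\mu)$, then $d(\lambda\alpha)=n$, so the defining condition of $\Lambda^{\leq n}$ is vacuous and $\lambda\alpha\in\Lambda^{\leq n}$.

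For the reverse direction, I would assume \eqref{KP3'alt} and verify (KP3$'$) directly. Fix $n\in\N^k\setminus\{0\}$ and $\lambda,\mu\in\Lambda^{\leq n}$. The crux is to analyse $\Lambda^{\min}(\lambda,\mu)$ under the $\Lambda^{\leq n}$ hypothesis and show it collapses: if $(\alpha,\beta)\in\Lambda^{\min}(\lambda,\mu)$ and some coordinate $d(\alpha)_i>0$, then the initial segment of $\alpha$ of degree $e_i$ witnesses $s(\lambda)\Lambda^{e_i}\neq\emptyset$, so the definition of $\Lambda^{\leq n}$ forces $d(\lambda)_i=n_i$; but then $d(\lambda\alpha)_i>n_i\geq (d(\lambda)\vee d(\mu))_i$, contradicting $d(\lambda\alpha)=d(\lambda)\vee d(\mu)$. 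Hence $\alpha=s(\lambda)$ and similarly $\beta=s(\mu)$, which forces $\lambda=\lambda\alpha=\mu\beta=\mu$. Thus $\Lambda^{\min}(\lambda,\mu)=\emptyset$ when $\lambda\neq\mu$, and $\Lambda^{\min}(\lambda,\lambda)=\{(s(\lambda),s(\lambda))\}$. Plugging into \eqref{KP3'alt} gives $S_{\lambda^{*}}S_\mu=0$ if $\lambda\neq\mu$ and $S_{\lambda^{*}}S_\lambda=S_{s(\lambda)}S_{s(\lambda)^{*}}=P_{s(\lambda)}$ (using the conventions $S_v=P_v=S_{v^{*}}$ together with (KP1)).

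The main obstacle is the reverse direction: one has to extract from the membership $\lambda,\mu\in\Lambda^{\leq n}$ enough information to eliminate all non-trivial minimal common extensions, which is exactly where the defining source-condition in $\Lambda^{\leq n}$ is used. The forward direction is essentially bookkeeping on top of Proposition~\ref{prop:lspfamily}.
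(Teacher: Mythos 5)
Your proposal is correct and follows essentially the same route as the paper: the forward direction is Proposition~\ref{prop:lspfamily} applied with $n=d(\lambda)\vee d(\mu)$ (you just make explicit the identification of the index set with $\Lambda^{\min}(\lambda,\mu)$, which the paper treats as immediate), and the reverse direction uses the source condition built into $\Lambda^{\leq n}$ to collapse $\Lambda^{\min}(\lambda,\mu)$. Your reverse argument is a uniform coordinate-wise version of the paper's two-case analysis ($d(\lambda)=d(\mu)$ versus $d(\lambda)\neq d(\mu)$), but it rests on the same key observation and is complete as written.
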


\begin{proof} Suppose (KP3$'$) holds. Then $(P,S)$ is a Kumjian-Pask $\Lambda$-family. 
Let $\lambda,\mu \in \Lambda$ and apply Proposition~\ref{prop:lspfamily} with 
$n=d(\lambda)\vee d(\mu)$ to get 
$S_{\lambda^{*}}S_\mu=\sum_{(\alpha,\beta)\in\Lambda^{\min}(\lambda,\mu)}
S_\alpha S_{\beta^{*}}$. 

Conversely, suppose that for all  $\lambda,\mu \in \Lambda$, \eqref{KP3'alt} holds. 
Fix $n\in\N^k\setminus\{0\}$ and let $\lambda,\mu\in\Lambda^{\leq n}$. Note 
that $d(\lambda)\vee d(\mu)\leq n$. First suppose that $d(\lambda)=d(\mu)$. Then
\[
\Lambda^{\min}(\lambda,\mu)=\begin{cases} 
\{(s(\lambda), s(\lambda))\}&\text{if $\lambda=\mu$;}
\\
\emptyset&\text{else,} \end{cases}
\] 
and \eqref{KP3'alt} gives $S_{\lambda^{*}}S_\mu=\delta_{\lambda,\mu}P_{s(\lambda)}$. 
Second, suppose $d(\lambda)\neq d(\mu)$. Then at least one of $\lambda,\mu$ has degree
 less than $n$, say $d(\lambda)<n$. But $\lambda\in\Lambda^{\leq n}$, and so there is
 no $\alpha$ such that $d(\lambda)<d(\lambda\alpha)\leq n$. Now
\begin{align*}
\Lambda^{\min}(\lambda,\mu)&=\{(s(\lambda),\beta):\lambda=\mu\beta, d(\lambda)=d(\lambda)\vee d(\mu)\}\\
&=\{(s(\lambda),\beta):\lambda=\mu\beta, d(\mu)< d(\lambda)\}
\end{align*}
since $d(\lambda)\neq d(\mu)$. But $\mu\in\Lambda^{\leq n}$ too, so $\Lambda^{\min}(\lambda,\mu)=\emptyset$. 
By \eqref{KP3'alt}
$S_{\lambda^*}S_\mu=0$, and $\lambda\neq \mu$ implies that $S_{\lambda^{*}}S_\mu=\delta_{\lambda,\mu}P_{s(\lambda)}$.
Thus in either case, $S_{\lambda^{*}}S_\mu=\delta_{\lambda,\mu}P_{s(\lambda)}$ 
as required.
\end{proof}

In order to demonstrate the existence of a nonzero Kumjian-Pask $\Lambda$-family 
we need to impose the
``local convexity'' condition from \cite{RSY03}.

\begin{prop}
 \label{prop:nonzerofam}
Let $\Lambda$ be a locally convex, row-finite $k$-graph.  Then there exist an $R$-algebra $A$ and a
Kumjian-Pask $\Lambda$-family $(P,S)$ in  $A$ with  $S_{\lambda}, S_{\lambda^*}, P_v  \neq 0$ for all $v\in \Lambda^0$ and $\lambda\in\Lambda$.  In particular, for every 
$r \in R\setminus\{0\}$ and $v\in\Lambda^0$, we have $rP_v \neq 0$.
\end{prop}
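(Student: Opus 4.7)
The plan is to construct a nonzero Kumjian-Pask $\Lambda$-family acting on the free $R$-module spanned by the boundary paths of $\Lambda$. Let $F$ be the free $R$-module with basis $\{\delta_x:x\in\Lambda^{\leq\infty}\}$ and set $A:=\End_R(F)$. Define $R$-linear operators on $F$ by
\begin{align*}
P_v\delta_x&=\delta_{v,r(x)}\,\delta_x,\\
S_\lambda\delta_x&=\delta_{s(\lambda),r(x)}\,\delta_{\lambda x},\\
S_{\lambda^*}\delta_x&=\begin{cases}\delta_{\sigma^{d(\lambda)}(x)}&\text{if $d(\lambda)\leq d(x)$ and $x(0,d(\lambda))=\lambda$,}\\ 0&\text{otherwise,}\end{cases}
\end{align*}
where $\lambda x$ denotes the boundary path produced by composing the finite path $\lambda$ with $x$ when $s(\lambda)=r(x)$, via the converse factorisation property recalled in \S2.

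I would then check (KP1)--(KP2) by direct evaluation on basis vectors, using functoriality of composition, the identity $\sigma^{d(\lambda)}(\lambda x)=x$, and $(\mu\lambda)^*=\lambda^*\mu^*$. For (KP3$'$), if $\lambda,\mu\in\Lambda^{\leq n}$ and $\lambda=\mu$, a direct calculation gives $S_{\lambda^*}S_\lambda\delta_x=P_{s(\lambda)}\delta_x$. If instead $\lambda\neq\mu$, any nonzero term $S_{\lambda^*}S_\mu\delta_x$ would force a common extension $\lambda\alpha=\mu\beta$ with $d(\lambda\alpha)\leq d(\lambda)\vee d(\mu)\leq n$; but whenever $\lambda\in\Lambda^{\leq n}$ and $d(\lambda)_i<n_i$ there are no edges out of $s(\lambda)$ in direction $e_i$, so $\alpha$ and $\beta$ must be vertices, which forces $\lambda=\mu$ and gives a contradiction.

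The central and most delicate step is (KP4$'$): for each $v\in\Lambda^0$, $n\in\N^k\setminus\{0\}$ and $x\in v\Lambda^{\leq\infty}$, one must show that there is exactly one $\lambda\in v\Lambda^{\leq n}$ with $d(\lambda)\leq d(x)$ and $x(0,d(\lambda))=\lambda$; for this unique $\lambda$ one gets $S_\lambda S_{\lambda^*}\delta_x=\delta_x$, and every other term in the sum vanishes. Existence and uniqueness of this $\Lambda^{\leq n}$-prefix is the main obstacle: it is a purely graph-theoretic statement about boundary paths that uses both local convexity and the boundary-path defining condition \eqref{eq:boundarypath}, paralleling the argument of \cite[Proposition~3.11]{RSY03}.

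Finally, for the non-vanishing assertions I would invoke the fact (established in \cite{RSY03} using local convexity) that $v\Lambda^{\leq\infty}\neq\emptyset$ for every $v\in\Lambda^0$. Picking any $x\in v\Lambda^{\leq\infty}$ gives $rP_v\delta_x=r\delta_x\neq0$ for each $r\in R\setminus\{0\}$, so $rP_v\neq0$; and picking $x'\in s(\lambda)\Lambda^{\leq\infty}$ yields $S_\lambda\delta_{x'}=\delta_{\lambda x'}\neq0$ together with $S_{\lambda^*}\delta_{\lambda x'}=\delta_{x'}\neq0$, finishing the proposition.
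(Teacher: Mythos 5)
Your construction is exactly the paper's: the boundary-path representation on the free $R$-module $\F_R(\Lambda^{\leq\infty})$, with the same formulas for $P_v$, $S_\lambda$, $S_{\mu^*}$, the same style of argument for (KP1)--(KP3$'$), and the same use of $v\Lambda^{\leq\infty}\neq\emptyset$ for the non-vanishing claims. The one place you diverge is (KP4$'$): you propose to verify it for arbitrary $n\in\N^k\setminus\{0\}$, which requires the statement you flag as ``the main obstacle'' --- that every $x\in v\Lambda^{\leq\infty}$ has a unique prefix in $v\Lambda^{\leq n}$ --- and you leave that unproved. The paper avoids this entirely by invoking Lemma~\ref{lem-kp4alt} (which is \cite[Proposition~3.11]{RSY03}, and is where local convexity enters) to reduce (KP4$'$) to the case $n=e_i$ with $v\Lambda^{e_i}\neq\emptyset$; there the prefix is just $x(0,e_i)$, which exists because the boundary-path condition \eqref{eq:boundarypath} forces $d(x)_i\geq 1$ whenever $v\Lambda^{e_i}\neq\emptyset$, and uniqueness is trivial. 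Your unproved claim is in fact true and not hard to close: the candidate prefix is $\lambda=x(0,n\wedge d(x))$, which lies in $\Lambda^{\leq n}$ because $d(\lambda)_i<n_i$ forces $(n\wedge d(x))_i=d(x)_i$, so \eqref{eq:boundarypath} gives $s(\lambda)\Lambda^{e_i}=\emptyset$; uniqueness follows by the same no-extension argument you used for (KP3$'$), since two $\Lambda^{\leq n}$-prefixes of $x$ admit a common extension of degree at most $n$. So either supply that short argument or, more economically, follow the paper and route the verification through Lemma~\ref{lem-kp4alt}; as submitted, the (KP4$'$) step is acknowledged but not actually established.
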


\begin{proof}
We modify the construction of the infinite-path representation of \cite{ACaHR} and instead
build a `boundary-path representation'.   Let $\F_R(\Lambda^{\leq \infty})$ be the free $R$-module
on $\Lambda^{\leq \infty}$.
For each $v \in \Lambda^0$ and $\lambda, \mu \in \Lambda^{\leq \infty}$,
define functions $f_v,f_{\lambda}$ and $f_{\mu}:\Lambda^{\leq \infty} \to \F_R(\Lambda^{\leq \infty})$ by
\begin{align*} f_v(x)&=
     \begin{cases}
      x & \text{if $r(x)=v$;}\\
      0 & \text{otherwise,}
     \end{cases}\\
f_\lambda(x)&=
     \begin{cases}
      \lambda x & \text{if $r(x)=s(\lambda)$;}\\
      0 & \text{ otherwise,}
     \end{cases}\\
f_{\mu^{*}}(x)&=
     \begin{cases}
      y & \text{if $x=\mu y$ for some $y\in\Lambda^{\leq \infty}$;}\\
      0 & \text{ otherwise.}
     \end{cases}
\end{align*}
By the universal property of free modules, there exist nonzero  
$P_v, S_{\lambda}, S_{\mu^{*}} \in \End(\F_R(\Lambda^{\leq \infty}))$ 
extending $f_v$, $f_{\lambda}$ and $f_{\mu^{*}}$. Note that $rP_v \neq 0$ for every $r \in R\setminus\{0\}$.

We claim that $(P,S)$ is a Kumjian-Pask $\Lambda$-family in
the $R$-algebra 
$\End(\F_R(\Lambda^{\leq \infty}))$.  Relations (KP1) and (KP2) are
straight-forward to check.  To see (KP3$'$), fix $n \in \N^k\setminus \{0\}$,
$\lambda, \mu \in \Lambda^{\leq n}$ and $x \in \Lambda^{\leq \infty}$. If $r(\mu)\neq r(\lambda)$, then both
 $S_{\lambda^*}S_{\mu}=S_{\lambda^*}P_{r(\lambda)}P_{r(\mu)}S_{\mu}$ and $\delta_{\lambda,\mu} P_{s(\lambda)}$ are $0$. So we may assume $r(\mu)=r(\lambda)$.
Notice that
\begin{equation*}
 S_{\lambda^*}S_{\mu}(x) 
= \begin{cases}
     S_{\lambda^*}(\mu x) & \text{if } x(0)=s(\mu) \text{ and $\mu x=\lambda y$ for some $y\in\Lambda^{\leq\infty}$}
;\\
0&\text{otherwise}.
   \end{cases}
\end{equation*}

Since $\lambda,\mu \in r(\lambda)\Lambda^{\leq n}$, 
$(\mu x)(0,d(\lambda))=\lambda$ implies
either $\lambda = \mu \lambda'$ or $\mu= \lambda \mu'$ for
some $\lambda',\mu' \in \Lambda$.  But then $\lambda = \mu$ by
the definition of $\Lambda^{\leq n}$.  Hence $(\mu x)(0,d(\lambda))=\lambda$ 
if and only if $\mu = \lambda$.
Thus 
\begin{align*}
 S_{\lambda^*}S_{\mu}(x) 
&= \begin{cases}
     x & \text{if } x(0)=s(\mu) \text{ and }
 \lambda = \mu;\\
0&\text{otherwise},
   \end{cases}
\\
&= \delta_{\lambda,\mu}P_{s(\lambda)}(x)
\end{align*}
and (KP3$'$) holds.

For (KP4$'$),  fix  $v\in\Lambda^0$ and $1\leq i\leq k$ with
$v\Lambda^{e_i} \neq \emptyset$. Since
$\Lambda$ is locally convex, it suffices to show that
 $P_v=\sum_{\lambda \in v\Lambda^{e_i}}S_\lambda S_{\lambda^{*}}$ 
by Lemma~\ref{lem-kp4alt}.   Let $x \in \Lambda^{\leq \infty}$.
Then
\begin{align*}
 \sum_{\lambda \in v\Lambda^{e_i}}S_{\lambda} S_{\lambda^{*}} (x)
= \sum_{\lambda \in v\Lambda^{e_i}} \delta_{\lambda,x(0,e_i)}x
&= \begin{cases}
    x& \text{if } r(x) =v;\\
0&\text{otherwise},
   \end{cases}\\
&= P_v(x).\qedhere
\end{align*}
\end{proof}

 We are now ready to show that there is an $R$-algebra which is ``universal for Kumjian-Pask 
$\Lambda$-families''; the proof is very similar to the one for $k$-graphs without sources \cite[Theorem~3.4]{ACaHR}, so we will just give an outline addressing the main points. 
  This $R$-algebra is graded over $\Z^k$, and to see  that the graded subgroups  have a nice description uses Proposition~\ref{prop:lspfamily}; so we will need to check carefully that the argument used when $\Lambda$ has no sources still works when $\Lambda^n$  is replaced by $\Lambda^{\leq n}$.  We will follow the convention of \cite{ACaHR} and use lower-case letters for universal Kumjian-Pask families.

\begin{thm}\label{thm-KP}
Let $\Lambda$ be a locally convex, row-finite $k$-graph.
\begin{enumerate}
\item\label{item-a}
There is an $R$-algebra $\KP_R (\Lambda)$, generated by a Kumjian-Pask 
$\Lambda$-family $(p,s)$, such that
if $(Q,T)$ is a Kumjian-Pask $\Lambda$-family in an $R$-algebra $A$, 
then there exists a unique $R$-algebra homomorphism $\pi_{Q,T}: \KP_R (\Lambda) \to A$ such that 
$\pi_{Q,T} \circ p=Q$ and $\pi_{Q,T} \circ s=T$.  For every 
$r \in R\setminus\{0\}$ and $v\in\Lambda^0$, we have $rp_v \neq 0$.
\item\label{item-b} The subsets
\[\KP_R(\Lambda)_n := \lsp\{s_\alpha s_{\beta^{*}}: d(\alpha)-d(\beta)=n\}\] 
form a $\Z^k$-grading of $\KP_R(\Lambda)$.
\end{enumerate} 
\end{thm}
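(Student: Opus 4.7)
The plan is to construct $\KP_R(\Lambda)$ as a quotient of a free $R$-algebra by the ideal generated by the Kumjian-Pask relations; the universal property will then be built in, nontriviality will follow from Proposition~\ref{prop:nonzerofam}, and the $\Z^k$-grading will come from observing that the defining relations are homogeneous in a natural grading of the free algebra.

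\emph{For part (a)}, I would take $X:=\Lambda^0\cup\Lambda^{\neq 0}\cup G(\Lambda^{\neq 0})$, form the free associative $R$-algebra $F_R(X)$, and let $I$ be the two-sided ideal generated by the elements expressing (KP1), (KP2), (KP3$'$) and (KP4$'$). Set $\KP_R(\Lambda):=F_R(X)/I$ and let $(p,s)$ be the images of the canonical generators. Given any Kumjian-Pask $\Lambda$-family $(Q,T)$ in an $R$-algebra $A$, the assignment $v\mapsto Q_v$, $\lambda\mapsto T_\lambda$, $\mu^{*}\mapsto T_{\mu^{*}}$ extends uniquely to an $R$-algebra homomorphism $F_R(X)\to A$ that kills $I$ (because $(Q,T)$ satisfies the relations), and therefore descends to a unique $\pi_{Q,T}:\KP_R(\Lambda)\to A$ with the required properties. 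For the nontriviality statement, I would apply this universal property to the boundary-path representation $(P,S)$ produced in Proposition~\ref{prop:nonzerofam}: if $rp_v=0$ in $\KP_R(\Lambda)$, then $rP_v=\pi_{P,S}(rp_v)=0$, contradicting that proposition.

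\emph{For part (b)}, I would equip $F_R(X)$ with the $\Z^k$-grading where each $v\in\Lambda^0$ has degree $0$, each $\lambda\in\Lambda^{\neq 0}$ has degree $d(\lambda)$, and each $\mu^{*}\in G(\Lambda^{\neq 0})$ has degree $-d(\mu)$. Every generator of $I$ is then homogeneous: (KP1) sits in degree $0$; the four lines in (KP2) sit in degrees $d(\lambda)+d(\mu)$, $-d(\lambda)-d(\mu)$, $d(\lambda)$ and $-d(\lambda)$ respectively; each element $s_{\lambda^{*}}s_\mu-\delta_{\lambda,\mu}p_{s(\lambda)}$ coming from (KP3$'$) either reduces to the single homogeneous element $s_{\lambda^{*}}s_\mu$ of degree $d(\mu)-d(\lambda)$ (when $\lambda\neq\mu$) or has both terms in degree $0$ (when $\lambda=\mu$, in which case $d(\lambda)=d(\mu)$); and in (KP4$'$) every summand $s_\lambda s_{\lambda^{*}}$ has degree $d(\lambda)-d(\lambda)=0$, matching $p_v$. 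Thus $I$ is a graded ideal and $\KP_R(\Lambda)=F_R(X)/I$ inherits a $\Z^k$-grading. Since $s_\alpha s_{\beta^{*}}$ is homogeneous of degree $d(\alpha)-d(\beta)$, Corollary~\ref{cor-span} identifies the degree-$n$ component as $\lsp\{s_\alpha s_{\beta^{*}}:d(\alpha)-d(\beta)=n\}$, as claimed.

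The main subtlety worth flagging is the homogeneity check for (KP4$'$): the summation ranges over $v\Lambda^{\leq n}$, whose elements may have \emph{different} degrees in $\N^k$, so one must notice that each $s_\lambda s_{\lambda^{*}}$ individually sits in grading-degree $0$ rather than try to read a common degree off the indexing set. Once this is observed, the existence of the grading follows by construction from graded ideals in graded algebras, and no auxiliary coaction or gauge-type argument is needed.
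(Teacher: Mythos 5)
Your proposal is correct, and part (a) is essentially the paper's argument: the same free algebra on words in $X=\Lambda^0\cup\Lambda^{\neq 0}\cup G(\Lambda^{\neq 0})$ modulo the ideal $I$ generated by the relations, with uniqueness of $\pi_{Q,T}$ from the generators and nontriviality of $rp_v$ pulled back through $\pi_{P,S}$ from the boundary-path representation of Proposition~\ref{prop:nonzerofam} (just make sure your ``free associative algebra'' is the non-unital one spanned by the nonempty words, as in the paper; otherwise the universal property as stated and Corollary~\ref{cor-span} need adjusting). In part (b) you and the paper both grade the free algebra, check that the generators of $I$ are homogeneous --- including the same observation about (KP4$'$), namely that each summand $\lambda\lambda^*$ with $\lambda\in v\Lambda^{\leq n}$ has degree $0$ even though the paths $\lambda$ need not share a degree --- and conclude that the quotient is graded by the subspaces $(\F_R(w(X))/I)_n$. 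Where you genuinely diverge is in identifying these components with $\KP_R(\Lambda)_n=\lsp\{s_\alpha s_{\beta^*}:d(\alpha)-d(\beta)=n\}$: the paper proves the inclusion $(\F_R(w(X))/I)_n\subseteq\KP_R(\Lambda)_n$ by an induction on word length (the lemma immediately following the theorem, modelled on \cite[Lemma~3.5]{ACaHR}), repeatedly invoking Proposition~\ref{prop:lspfamily} to rewrite subwords of the form $\lambda^*\mu$, whereas you obtain it abstractly: by Corollary~\ref{cor-span} every element is a finite sum of the elements $s_\alpha s_{\beta^*}$, each homogeneous of degree $d(\alpha)-d(\beta)$ in the quotient grading, so comparing homogeneous components of such a sum with an element of $(\F_R(w(X))/I)_n$ gives the reverse inclusion, and multiplicativity of the claimed grading is then inherited from the quotient grading. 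Both routes rest on Proposition~\ref{prop:lspfamily} (yours via Corollary~\ref{cor-span}); yours is shorter and replaces the induction by one line of linear algebra, while the paper's induction yields the slightly finer statement that $q(w)\in\KP_R(\Lambda)_{d(w)}$ for every word $w$. I would only ask that you spell out that component-comparison step explicitly rather than leaving it at ``Corollary~\ref{cor-span} identifies the degree-$n$ component''.
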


\begin{proof}
Let $X:=\Lambda^0 \cup \Lambda^{\neq 0} \cup G(\Lambda^{\neq 0})$ and $\F_R(w(X))$ be the free algebra 
on the set $w(X)$ of words on $X$. Let $I$ be the ideal of $\F_R(w(X))$ generated by elements from the sets:
\begin{enumerate}
\item[(i)] $\{vw-\delta_{v,w}v: v,w \in \Lambda^0\}$,
\item[(ii)] $\{\lambda-\mu\nu, \lambda^{*} -\nu^{*}\mu^{*}: \lambda,\mu,\nu \in \Lambda^{\neq 0}
\text{ an } \lambda = \mu\nu\}$,
\item[(iii)] $\{\lambda-r(\lambda)\lambda ,\lambda - \lambda s(\lambda),
\lambda^{*}-s(\lambda)\lambda^{*},\lambda^{*}-\lambda^{*}r(\lambda)
:\lambda \in \Lambda^{\neq 0}\}$,
\item[(iv)] $\{\lambda^{*}\mu-\delta_{\lambda,\mu}s(\lambda):
\lambda, \mu \in \Lambda^{\leq n}, n \in \N^k \setminus \{0\}\}$,
\item[(v)] $\{v-\sum_{\lambda \in v\Lambda^{\leq n}} \lambda \lambda^{*}: 
v \in \Lambda^0 \text{ and }n \in \N^k \setminus \{0\}\}$.
\end{enumerate}
Set $\KP_R(\Lambda):=\F_R(w(X))/I$, and write $q: \F_R(w(X))\to \F_R(w(X))/I$ for the quotient map. Define $p:\Lambda^0 \to \KP_R(\Lambda)$ by $p_v=q(v)$, and  
$s: \Lambda^{\neq 0} \cup G(\Lambda^{\neq 0}) \to KP_R(\Lambda)$ by 
$s_\lambda=q(\lambda)$ and $s_{\lambda^{*}}=q(\lambda^{*})$. 
Then $(p,s)$ is a Kumjian-Pask $\Lambda$-family in the $R$-algebra $\KP_R(\Lambda)$. 

Now let $(Q,T)$ be a Kumjian-Pask $\Lambda$-family in an $R$-algebra $A$. 
Define $f:X \to A$ by $f(v)=Q_v$, $f(\lambda)=T_\lambda$ and $f(\lambda^{*})=T_{\lambda^{*}}$. 
The universal property of $\F_R(w(X))$  gives a unique $R$-algebra homomorphism $\psi: \F_R(w(X)) \to A$ such 
that $\psi|_X=f$. Since $(Q,T)$ is a Kumjian-Pask family,  $I \subseteq \ker(\psi)$. Thus there exists a unique  
$R$-algebra homomorphism $\pi_{Q,T}:\KP_R(\Lambda) \to A$  such that $\pi_{Q,T}\circ q=\psi$. It follows that 
$\pi_{Q,T} \circ p=Q$ and $\pi_{Q,T} \circ s=T$. 

Now fix $r \in R\setminus\{0\}$ and $v\in\Lambda^0$. If  $rp_v$ were zero then $r\pi_{Q,T}(p_v)=rQ_v$ would be 
zero for every Kumjian-Pask $\Lambda$-family $(Q,T)$.  But this is not the case for the Kumjian-Pask family of Proposition~\ref{prop:nonzerofam}.  
Thus $rp_v\neq 0$. This completes the proof of~\eqref{item-a}.

For~\eqref{item-b},  extend the degree map to   words on $X$ by setting $d:w(X) \to \Z^k$
 by $d(w)=\sum_{i=1}^{|w|} d(w_i)$.
By \cite[Proposition~2.7]{ACaHR},   $\F_R(w(X))$ is graded over $\Z^k$ by
the subgroups
\[\F_R(w(X))_n:=\Big\{ \sum_{w \in w(X)} r_w w: r_w \neq 0 \implies d(w)=n \Big\}.\]

We claim that the ideal $I$ defined in the proof of~\eqref{item-a} is graded.  For this, it suffices to see that $I$ is generated by homogeneous elements, that is, elements in $\F_R(w(X))_n$ for some $n\in \Z^k$.  The generators of $I$ in (i) are a linear combination of words of degree $0$, hence are homogeneous of degree $0$. If $\lambda=\mu\nu$ in $\Lambda$ then  $\lambda-\mu\nu$ is a linear combination of words of degree $d(\lambda)$, so  all the generators in (ii) are homogeneous. Also, $\lambda-r(\lambda)\lambda$ is homogeneous of degree $\lambda$, and similarly  all the generators in (iii) are homogeneous of some degree\footnote{This has caused some confusion before: for example in \cite[Proof of Proposition~4.7]{T}, $e-r(e)e$ for an edge $e$ in a graph is claimed to be $0$-graded whereas it is $1$-graded.}.  The elements in  (iv) are either of the form  $\lambda^*\lambda-s(\lambda)$ or of the form $\lambda^*\mu$; the former is homogeneous of degree $0$ and the latter is homogeneous of degree $d(\mu)-d(
\lambda)$. A word $\lambda\lambda^*$ has degree $0$, and hence the generators in (v) are homogeneous of degree $0$.
Thus $I$ is a graded ideal.

Since $I$ is graded,  the quotient $\KP_R(\Lambda)$ of
$\F_R(w(X))$ by $I$ is graded by the subgroups 
\[
(\F_R(w(X))/I)_n:=\lsp \{ q(w):w\in w(X),  d(w)=n \}.
\]
By Corollary~\ref{cor-span}, $\KP_R(\Lambda)=\lsp\{s_\alpha s_{\beta^*}:\alpha,\beta\in\Lambda, s(\alpha)=s(\beta)\}$. We need to show that 
\[
\KP_R(\Lambda)_n :=  \lsp\{s_\alpha s_{\beta^{*}}: d(\alpha)-d(\beta)=n\}=(\F_R(w(X))/I)_n.
\]
First, fix $s_\lambda s_{\mu^{*}} \in \{s_\alpha s_{\beta^{*}}:d(\alpha)-d(\beta)=n\}$. 
Then $s_\lambda s_{\mu^{*}}=q(\lambda)q(\mu^*)=q(\lambda\mu^{*})$,
 and  $d(\lambda\mu^{*})=d(\lambda)-d(\mu)=n$. Thus
$s_\lambda s_\mu^{*} \in \{q(w):d(w)=n, w \in w(X)\}\in (\F_R(w(X))/I)_n$. 

 That $ (\F_R(w(X))/I)_n \subseteq \KP_R(\Lambda)_n$ follows  immediately from the next lemma; the proof is very similar to that of \cite[Lemma~3.5]{ACaHR}, but we need to check replacing $\Lambda^n$  by $\Lambda^{\leq n}$ in the argument does not cause problems.  
\end{proof}

\begin{lemma}Let $X:=\Lambda^0 \cup \Lambda^{\neq 0} \cup G(\Lambda^{\neq 0})$ and  
$q:\F_R(w(X)) \to  \KP_R(\Lambda) $ be the quotient map.
If $w\in w(X)$, then $q(w)\in \KP_R(\Lambda)_{d(w)}$. 
\end{lemma}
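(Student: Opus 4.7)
The plan is to prove this by induction on the word length $|w|$, using that $\KP_R(\Lambda) = \lsp\{s_\alpha s_{\beta^*} : s(\alpha) = s(\beta)\}$ from Corollary~\ref{cor-span} together with Proposition~\ref{prop:lspfamily} to handle the inductive step.

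For the base case $|w|=1$, so $w \in X$, I would split into three cases. If $w = v \in \Lambda^0$, then $q(w) = p_v = s_v s_{v^*}$ (using the convention $s_v = p_v = s_{v^*}$), and $d(v)-d(v)=0=d(w)$. If $w = \lambda \in \Lambda^{\neq 0}$, then by (KP2), $q(w) = s_\lambda = s_\lambda s_{s(\lambda)^*}$, which lies in $\KP_R(\Lambda)_{d(\lambda)}$. The ghost-path case is symmetric.

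For the inductive step, write $w = w' y$ with $|w'| = |w|-1$ and $y \in X$. By the inductive hypothesis, $q(w') \in \KP_R(\Lambda)_{d(w')}$, so it suffices to show that right-multiplication by $q(y)$ carries each spanning element $s_\alpha s_{\beta^*}$ of degree $n := d(\alpha)-d(\beta)$ into $\KP_R(\Lambda)_{n+d(y)}$. Again split on $y$:
\begin{itemize}
\item If $y = v \in \Lambda^0$, then $s_\alpha s_{\beta^*} p_v = \delta_{s(\beta),v}\, s_\alpha s_{\beta^*}$ by (KP1) and (KP2); since $d(v)=0$, the degree is unchanged.
\item If $y = \lambda^* \in G(\Lambda^{\neq 0})$, then by (KP2), $s_{\beta^*} s_{\lambda^*} = s_{(\lambda\beta)^*}$ when $r(\beta) = s(\lambda)$ and is zero otherwise (absorbed via vertex projections). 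The result has degree $d(\alpha) - d(\lambda) - d(\beta) = n + d(\lambda^*)$, as needed.
\item If $y = \lambda \in \Lambda^{\neq 0}$, pick $m = d(\beta) \vee d(\lambda)$ and apply Proposition~\ref{prop:lspfamily} to obtain
\[
s_{\beta^*} s_\lambda = \sum_{\substack{\beta\alpha' = \lambda\beta'\\ \beta\alpha' \in \Lambda^{\leq m}}} s_{\alpha'} s_{\beta'^*}.
\]
In each summand, $r(\alpha') = s(\beta) = s(\alpha)$, so $s_\alpha s_{\alpha'} = s_{\alpha\alpha'}$ by (KP2), and the factorisation property gives $d(\alpha') - d(\beta') = d(\lambda) - d(\beta)$. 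Therefore each resulting term $s_{\alpha\alpha'} s_{\beta'^*}$ has degree $d(\alpha) + d(\lambda) - d(\beta) - d(\beta)+d(\beta) = n + d(\lambda)$, which is the desired total degree.
\end{itemize}

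The main obstacle is the case $y \in \Lambda^{\neq 0}$, since this is the only case where the product $s_\alpha s_{\beta^*} s_\lambda$ does not simplify by (KP2) alone and one must genuinely invoke Proposition~\ref{prop:lspfamily} to rewrite $s_{\beta^*} s_\lambda$. The delicate point is verifying that each term of the resulting finite sum still lies in the single homogeneous component $\KP_R(\Lambda)_{n+d(\lambda)}$, which follows from the identity $d(\beta)+d(\alpha')=d(\lambda)+d(\beta')$ forced by the factorisation property. Everything else is routine bookkeeping with (KP1)--(KP2) and the convention $s_v = p_v$.
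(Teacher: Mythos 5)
Your proof is correct, and it takes a mildly different route from the paper's. The paper also inducts on $|w|$, but its inductive step works at the level of words: it collapses an adjacent composable pair $\lambda\mu$ (or $\mu^*\lambda^*$) to shorten the word, and when no such pair exists it applies Proposition~\ref{prop:lspfamily} to a subword $\lambda^*\mu$ and then argues that each resulting word of the \emph{same} length now contains a collapsible pair (this is why the paper treats $|w|=2$ as a separate base case and needs the ``since $|w|\geq 3$'' remark). You instead peel off the last letter $y$, invoke the inductive hypothesis to write $q(w')$ as a combination of spanning elements $s_\alpha s_{\beta^*}$ with $d(\alpha)-d(\beta)=d(w')$, and check that right multiplication by $q(y)$ shifts the homogeneous component by exactly $d(y)$; the only nontrivial case, $y\in\Lambda^{\neq 0}$, is handled by Proposition~\ref{prop:lspfamily} just as in the paper, with the degree bookkeeping forced by $d(\beta)+d(\alpha')=d(\lambda)+d(\beta')$. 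Your version avoids the ``length does not decrease'' subtlety altogether, at the cost of working with the span description of the graded pieces rather than with words; both are legitimate, and the key input (Proposition~\ref{prop:lspfamily}, plus (KP1)--(KP2) and the convention $s_v=p_v$) is the same. Two small slips, neither of which affects the argument: in the case $y=v\in\Lambda^0$ the correct identity is $s_{\beta^*}p_v=\delta_{r(\beta),v}\,s_{\beta^*}$ (the relevant vertex is $s(\beta^*)=r(\beta)$, not $s(\beta)$), and the degree computation in the last bullet should simply read $d(\alpha)+d(\alpha')-d(\beta')=d(\alpha)+d(\lambda)-d(\beta)=n+d(\lambda)$.
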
 

\begin{proof} The proof is by induction on $|w|$. We treat the cases $|w|=1,2$ separately. Recall that by our convention  $s_v:=p_v$ and $s_{v^*}:=p_v$ for $v\in \Lambda^0$.

 If $|w|=1$ there are two possibilities.   If $w=\lambda$ for some $\lambda\in \Lambda$, then $q(w)=s_\lambda=s_\lambda s_{s(\lambda)^*}$ and $d(\lambda)-d(s(\lambda))=d(\lambda)$, and so 
$q(w)\in \KP_R(\Lambda)_{d(w)}$.  Otherwise,  if $w=\lambda^*$,  then $q(w)=s_{\lambda^*}=s_{s(\lambda)}s_{\lambda^*}$ and $d(s(\lambda))-d(\lambda)=d(\lambda^*)$, so $q(w)\in \KP_R(\Lambda)_{d(w)}$. 

 If $|w|=2$ there are four possibilities: $w=\lambda\mu^*,  \lambda\mu, \mu^*\lambda^*$  or $\lambda^*\mu$. The first three possibilities are quickly dealt with since
 \begin{align*}
& q(\lambda\mu^*)=s_\lambda s_{\mu^*}\text{\ and \ } d(\lambda)-d(\mu)=d(\lambda\mu^*),\\
& q(\lambda\mu)=s_{\lambda\mu} s_{s(\mu)^*}\text{\ and \ } d(\lambda\mu)-d(s(\mu))=d(\lambda\mu),\\
& q(\mu^*\lambda^*)=s_{s(\mu)}s_{(\lambda\mu)^*}\text{\ and \ } d(s(\mu))-d((\lambda\mu)^*)=d(\mu^*\lambda^*).
 \end{align*}
 So suppose $w=\lambda^*\mu$. Let $m=d(\mu)\vee d(\lambda)$.  By Proposition~\ref{prop:lspfamily} we have
 \[
 q(\lambda^*\mu)=s_{\lambda^{*}}s_\mu=\sum_{\lambda\alpha=\mu\beta, 
\lambda\alpha \in \Lambda^{\leq m}} s_\alpha s_{\beta^{*}}.
 \]
In each summand, $\lambda\alpha=\mu\beta$ implies $d(w)=d(\mu)-d(\lambda)=d(\alpha)-d(\beta)$, so $q(w)\in \KP_R(\Lambda)_{d(w)}$ as needed. 

Now let $n\geq 2$ and suppose that $q(y)\in  \KP_R(\Lambda)_{d(y)}$ for every word $y$ with $|y|\leq n$. Let $w$ be a word with $|w|=n+1$ and $q(w)\not=0$. If $w$ contains a subword $w_iw_{i+1}=\lambda\mu$, then  $\lambda$ and $\mu$ are composable in $\Lambda$ since otherwise $q(\lambda\mu)=0$. Let $w'$ be the word obtained from $w$ by replacing $w_iw_{i+1}$ with the single path $\lambda\mu$. Then
\[
q(w)=s_{w_1}\cdots s_{w_{i-1}}s_{\lambda}s_{\mu}s_{w_{i+2}}\cdots s_{w_{n+1}}
=s_{w_1}\cdots s_{w_{i-1}}s_{\lambda\mu}s_{w_{i+2}}\cdots s_{w_{n+1}}=q(w').
\]
Since $|w'|=n$ and $d(w')=d(w)$, the inductive hypothesis implies that $q(w)\in \KP_R(\Lambda)_{d(w)}$. A similar argument shows that $q(w)\in \KP_R(\Lambda)_{d(w)}$ whenever $w$ contains a subword $w_iw_{i+1}=\lambda^*\mu^*$.

If $w$ contains no subword of the form $\lambda\mu$ or $\lambda^*\mu^*$, then, since $|w|\geq 3$, it must have a subword of the form $\lambda^*\mu$. By Proposition~\ref{prop:lspfamily} we write $q(w)$ as a sum of terms $q(y^i)$ with $|y^i|=n+1$ and $d(y^i)=d(w)$. Since $|w|\geq 3$, each nonzero summand $q(y^i)$ contains a factor of the form $s_{\beta^*}s_{\gamma^*}$ or one of the form $s_{\delta}s_\alpha$, and the argument above shows that every $q(y^i)\in \KP_R(\Lambda)_{d(w)}$. Thus  $q(w)\in \KP_R(\Lambda)_{d(w)}$ as well.
\end{proof}

\section{The uniqueness theorems}\label{sec-uniquenessthms}
Throughout this section, $\Lambda$ is a locally convex, row-finite $k$-graph, and $R$ is a commutative ring with $1$.

There are two uniqueness theorems in the theory of Kumjian-Pask algebras.  The graded-uniqueness theorem has no hypotheses on the graph, so 
applies very generally.  The Cuntz-Krieger uniqueness theorem assumes the graph satisfies an ``aperiodicity'' condition. 
The proofs of both theorems are straightforward once key helper-results (Lemma~\ref{lem:AaHRS4.3} and Proposition~\ref{prop:aperiodic}) have been established.

\begin{thm}[The graded-uniqueness theorem]\label{gut} Let $\Lambda$ be a locally convex, row-finite $k$-graph. 
  Suppose that $A$ is a $\Z^k$-graded $R$-algebra and 
$\phi:\KP_R(\Lambda) \to A$ is a graded $R$-algebra 
homomorphism. If $\phi(rp_v)\neq 0$ for all $r \in R \setminus \{0\}$ and $v \in \Lambda^0$, 
then $\phi$ is injective.
\end{thm}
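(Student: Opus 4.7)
The plan is to follow the standard strategy for graded-uniqueness theorems for graph-like algebras: reduce to checking injectivity on a single homogeneous component, use a normal-form argument to ensure any nonzero homogeneous element contains a surviving ``diagonal'' term after refinement, and then compress to a nonzero multiple of some vertex projection which the hypothesis forbids to vanish under $\phi$.

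\textbf{Step 1: reduction to the homogeneous case.} Since $\phi$ is $\Z^k$-graded, $\ker\phi$ is a graded ideal, so every element of $\ker\phi$ is a sum of its homogeneous components, each of which also lies in $\ker\phi$. It therefore suffices to prove that $a\in\KP_R(\Lambda)_n\cap\ker\phi$ implies $a=0$. By Theorem~\ref{thm-KP}(b) and Corollary~\ref{cor-span}, write $a=\sum_{i=1}^N r_i\,s_{\alpha_i}s_{\beta_i^{*}}$ with $s(\alpha_i)=s(\beta_i)$ and $d(\alpha_i)-d(\beta_i)=n$, and fix $M\in\N^k$ with $M\geq d(\alpha_i)$ and $M\geq d(\beta_i)$ for every $i$.

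\textbf{Step 2: normal form.} For each $i$, insert the refinement $p_{s(\alpha_i)}=\sum_{\eta\in s(\alpha_i)\Lambda^{\leq M-d(\alpha_i)}}s_\eta s_{\eta^{*}}$ supplied by (KP4$'$) between $s_{\alpha_i}$ and $s_{\beta_i^{*}}$; by (KP2), the $i$-th summand becomes $\sum_\eta r_i\,s_{\alpha_i\eta}s_{(\beta_i\eta)^{*}}$, and \cite[Lemma~3.6]{RSY03} (invoked in the proof of Proposition~\ref{prop:lspfamily}) ensures $\alpha_i\eta,\beta_i\eta\in\Lambda^{\leq M}$. Regrouping yields
\[
a=\sum_{(\mu,\nu)}r'_{(\mu,\nu)}\,s_\mu s_{\nu^{*}},
\]
a sum over pairwise distinct pairs $(\mu,\nu)\in\Lambda^{\leq M}\times\Lambda^{\leq M}$ with $s(\mu)=s(\nu)$ and $d(\mu)-d(\nu)=n$. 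The helper Lemma~\ref{lem:AaHRS4.3} (the main technical step, whose proof is the principal obstacle) is designed to guarantee that if $a\neq 0$, then at least one coefficient $r'_{(\mu,\nu)}$ is nonzero. Local convexity is crucial here: Lemma~\ref{lem-kp4alt} ensures the refinement sums are nonempty, and (KP3$'$) makes the collection $\{s_\mu s_{\nu^{*}}\}$ over distinct admissible pairs sufficiently ``orthogonal'' to preclude unintended cancellation after regrouping.

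\textbf{Step 3: isolate a vertex projection and contradict.} Assume $a\neq 0$ and pick a pair $(\mu_1,\nu_1)$ with $r'_1:=r'_{(\mu_1,\nu_1)}\neq 0$. Because every $\mu,\nu$ in the sum lies in $\Lambda^{\leq M}$, two applications of (KP3$'$)---one on each side---give $s_{\mu_1^{*}}s_\mu s_{\nu^{*}}s_{\nu_1}=\delta_{(\mu,\nu),(\mu_1,\nu_1)}\,p_{s(\mu_1)}$, so
\[
s_{\mu_1^{*}}\,a\,s_{\nu_1}=r'_1\,p_{s(\mu_1)}.
\]
Applying $\phi$ and using $\phi(a)=0$ forces $\phi(r'_1 p_{s(\mu_1)})=\phi(s_{\mu_1^{*}})\phi(a)\phi(s_{\nu_1})=0$, contradicting the hypothesis that $\phi(rp_v)\neq 0$ for every $r\in R\setminus\{0\}$ and $v\in\Lambda^0$. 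Hence $a=0$, and $\phi$ is injective. The whole argument collapses to the classical proof for $k$-graphs without sources once Step~2 is in hand; the only new ingredient is the systematic replacement of $\Lambda^n$ by $\Lambda^{\leq n}$ and the careful bookkeeping required by local convexity.
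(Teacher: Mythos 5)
Your overall strategy (reduce to a homogeneous element, compress to a nonzero multiple of a vertex idempotent, and invoke the hypothesis on $\phi(rp_v)$) can be made to work, but Step~2 as written contains a genuine error. You claim that after inserting $p_{s(\alpha_i)}=\sum_{\eta\in s(\alpha_i)\Lambda^{\leq M-d(\alpha_i)}}s_\eta s_{\eta^*}$ you obtain a normal form in which \emph{both} legs lie in $\Lambda^{\leq M}$, citing \cite[Lemma~3.6]{RSY03}. That lemma only gives $\alpha_i\eta\in\Lambda^{\leq M}$, because $\eta$ ranges over $s(\alpha_i)\Lambda^{\leq M-d(\alpha_i)}$, not over $s(\beta_i)\Lambda^{\leq M-d(\beta_i)}$; since $a$ is homogeneous of degree $n\neq 0$ we have $d(\alpha_i)\neq d(\beta_i)$, and $\beta_i\eta$ need satisfy neither the degree bound nor the maximality condition defining $\Lambda^{\leq M}$. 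In fact the symmetric normal form you want is impossible in general: if $\Lambda$ has no sources then $\Lambda^{\leq M}=\Lambda^M$, so any element written as $\sum r\,s_\mu s_{\nu^*}$ with $\mu,\nu\in\Lambda^{\leq M}$ is necessarily $0$-graded, whereas, e.g., $s_e$ for an edge $e$ is homogeneous of degree $e_1\neq 0$. Consequently the two-sided application of (KP3$'$) in Step~3, which rests on both legs being in $\Lambda^{\leq M}$, is unjustified as stated. (Your gloss on Lemma~\ref{lem:AaHRS4.3} is also off: that lemma is not about coefficients surviving the regrouping--that is automatic for a nonzero element--but about the compression $s_{\mu^*}as_\nu$ having nonzero $0$-graded component $r_{\mu,\nu}p_{s(\mu)}$.)

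The repair is short, and it is worth seeing how it differs from the paper. The paper's normal form (Lemma~\ref{lem:11}) only puts the \emph{right} legs into $\Lambda^{\leq n}$, accepts the surviving cross terms $s_{\mu^*}s_\alpha$ with $d(\alpha)\neq d(\mu)$ in Lemma~\ref{lem:AaHRS4.3}, and then uses gradedness of $\phi$ to extract the nonzero $0$-graded component of $\phi(s_{\mu^*}as_\nu)$; no homogeneous reduction is made. Your route can be fixed instead as follows: after the homogeneous reduction, take the paper's one-sided normal form; applying (KP3$'$) on the right kills all terms with $\nu\neq\nu_1$, and for the survivors homogeneity forces $d(\mu)-d(\nu_1)=n=d(\mu_1)-d(\nu_1)$, hence $d(\mu)=d(\mu_1)$; since any two paths of equal degree $d$ automatically lie in $\Lambda^{\leq d}$, (KP3$'$) then applies on the left and yields $s_{\mu_1^*}as_{\nu_1}=r'_1p_{s(\mu_1)}$. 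With that substitution for your Step~2/3 the argument is correct, and it uses the gradedness of $\phi$ only once (to reduce to homogeneous elements), whereas the paper uses it at the end to isolate the $0$-graded component.
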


\begin{thm}[The Cuntz-Krieger uniqueness theorem]\label{ckthm}
Let $\Lambda$ be a locally convex, row-finite $k$-graph satisfying the aperiodicity condition 
\begin{equation}\label{eq:aperiodic}
\text{for every $v \in \Lambda^0$, there exists $x\in v\Lambda^{\leq \infty}$
such that $\alpha \neq \beta$  implies $\alpha x \neq \beta x$.}
\end{equation}
Let $\phi:\KP_R(\Lambda) \to A$ 
be an $R$-algebra homomorphism into an $R$-algebra $A$. If $\phi(rp_v) \neq 0$ for all $r \in R \setminus \{0\}$ and 
$v \in \Lambda^0$, then $\phi$ is injective.
\end{thm}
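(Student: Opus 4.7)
The plan is to argue by contrapositive and to leverage the already-announced helper Proposition~\ref{prop:aperiodic}. I expect the content of that helper, in analogy with the reduction lemma used in the sourceless setting of \cite{ACaHR}, to be the following: under the aperiodicity condition \eqref{eq:aperiodic}, for every nonzero $a \in \KP_R(\Lambda)$ there exist $\mu,\nu \in \Lambda$, $r \in R\setminus\{0\}$ and $v \in \Lambda^0$ such that $s_{\mu^*} a s_\nu = r p_v$. Once this reduction is in hand, the uniqueness theorem drops out in a few lines and does not require any further graph-theoretic input.

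First I would reformulate the conclusion as $\ker\phi = \{0\}$ and argue by contradiction: suppose there exists a nonzero $a \in \ker\phi$. Applying Proposition~\ref{prop:aperiodic} produces $\mu,\nu \in \Lambda$, $r \in R\setminus\{0\}$ and $v\in \Lambda^0$ with $s_{\mu^*} a s_\nu = rp_v$. Because $\ker\phi$ is a two-sided ideal of $\KP_R(\Lambda)$, it contains $s_{\mu^*} a s_\nu = rp_v$; thus $\phi(rp_v)=0$, directly contradicting the hypothesis that $\phi(rp_v)\neq 0$ for every $r \in R\setminus\{0\}$ and every $v \in \Lambda^0$. This finishes the proof modulo the helper.

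The real obstacle, therefore, is the proof of Proposition~\ref{prop:aperiodic} itself, and I expect it to proceed as follows. Given $a \neq 0$, use Corollary~\ref{cor-span} to write $a=\sum_{i=1}^N r_i\, s_{\alpha_i} s_{\beta_i^*}$ with distinct pairs $(\alpha_i,\beta_i)$ satisfying $s(\alpha_i)=s(\beta_i)$ and all $r_i \in R\setminus\{0\}$. Pick $n \in \N^k$ dominating every $d(\alpha_i)$ and $d(\beta_i)$ and invoke Proposition~\ref{prop:lspfamily} together with (KP4$'$) to rewrite $a=\sum_j r_j'\, s_{\gamma_j} s_{\delta_j^*}$ with $\gamma_j,\delta_j \in \Lambda^{\leq n}$. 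Fix a source vertex $v$ appearing in a surviving summand and use \eqref{eq:aperiodic} to choose an aperiodic boundary path $x \in v\Lambda^{\leq\infty}$ that separates all pairs of distinct paths into $v$ occurring in the expansion. Initial segments of $x$ then allow one to left-multiply by a ghost $s_{\mu^*}$ and right-multiply by a path $s_\nu$ so that (KP3$'$) annihilates every off-diagonal summand. The subtle step, and where aperiodicity is used non-trivially, is arranging that at least one diagonal contribution survives as a nonzero scalar multiple of a single $p_v$; this will likely require a careful induction on the number of distinct source vertices appearing in the reduced expansion, paralleling the standard argument in the $C^*$-algebraic literature and in \cite{ACaHR}.
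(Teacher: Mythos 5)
Your deduction of Theorem~\ref{ckthm} from the helper is exactly the paper's argument: the paper also simply writes $a$ in normal form, applies Proposition~\ref{prop:aperiodic} to get $\sigma,\tau$ with $s_{\sigma^*}as_\tau=r_{\mu,\nu}p_{s(\mu)}$, and concludes $\phi(a)\neq 0$ since $\phi(s_{\sigma^*})\phi(a)\phi(s_\tau)=\phi(r_{\mu,\nu}p_{s(\mu)})\neq 0$; your contrapositive phrasing through the ideal $\ker\phi$ is the same computation. Your guess at the content of the helper is also accurate (the paper's version fixes $(\mu,\nu)\in F$ and produces $s_{\sigma^*}as_\tau=r_{\mu,\nu}p_{s(\mu)}$), so at the theorem level there is nothing to repair.

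The weak point is your sketch of Proposition~\ref{prop:aperiodic}, which is where all the aperiodicity content lives, and it deviates from what actually works. First, you cannot in general arrange both legs in $\Lambda^{\leq n}$: the normal form of Lemma~\ref{lem:11} only puts the second legs $\beta$ in $\Lambda^{\leq n}$ (appending $\lambda\in s(\beta)\Lambda^{\leq n-d(\beta)}$ to both legs forces $\beta\lambda\in\Lambda^{\leq n}$ but gives no control over $\alpha\lambda$), and that is all one needs, since only the right multiplication by $s_\nu$ uses (KP3$'$) at level $n$. Second, no induction on source vertices is needed and there is no delicate issue of "a diagonal term surviving": the paper first compresses once, getting by Lemma~\ref{lem:AaHRS4.3} that $s_{\mu^*}as_\nu=r_{\mu,\nu}p_{s(\mu)}+\sum_{(\alpha,\nu)\in F,\,d(\alpha)\neq d(\mu)}r_{\alpha,\nu}s_{\mu^*}s_\alpha$, then picks a \emph{single} aperiodic boundary path $y\in s(\mu)\Lambda^{\leq\infty}$ from \eqref{eq:aperiodic}, chooses $m_\alpha$ with $(\alpha y)(0,m_\alpha)\neq(\mu y)(0,m_\alpha)$, sets $m=\bigvee_\alpha m_\alpha$, and compresses again by $s_{y(0,m\wedge d(y))^*}(\cdot)s_{y(0,m\wedge d(y))}$; the surviving off-terms vanish because $(\mu y)(0,m_\alpha),(\alpha y)(0,m_\alpha)\in\Lambda^{\leq m_\alpha}$ and (KP3$'$) forces them to coincide, a contradiction, while $r_{\mu,\nu}p_{s(\mu)}$ passes through untouched since $r(y)=s(\mu)$. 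The genuinely new wrinkle in the with-sources setting, which your sketch does not address, is that $y$ need not be infinite in every coordinate, so one must truncate at $m\wedge d(y)$ and verify $m_\alpha\leq m\wedge d(y)+d(\alpha)$ and $m_\alpha\leq m\wedge d(y)+d(\mu)$ for the separation to survive the truncation. So the theorem-level reduction stands, but the route you propose to the helper (simultaneous separation of all pairs plus an induction) would need to be replaced by this one-pair, one-path compression argument.
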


The aperiodicity condition \eqref{eq:aperiodic} we have chosen to use is from  
\cite[Theorem~4.3]{RSY03}, is often called `condition $B$' in the literature, and generalises the many  notions of
 `aperiodicity' for $k$-graphs without sources.  See Lemma~\ref{lem-pain} below for more details.

We start by establishing that every nonzero element of $\KP_R(\Lambda)$ can be 
written in a certain form; this form differs from the one for graphs without sources only in the use of 
$\Lambda^{\leq n}$ in place of $\Lambda^n$.
    
\begin{lemma}
\label{lem:11}
Every nonzero $a \in \KP_R(\Lambda)$ 
can be written in normal form: that is, there exists $n \in \N^k \setminus\{0\}$ 
and a finite subset $F$ of $\Lambda \times \Lambda^{\leq n}$ such that
\[ a = \sum_{(\alpha, \beta) \in F} r_{\alpha, \beta}s_\alpha s_{\beta^{*}}\]
where $r_{\alpha,\beta} \in R\setminus\{0\}$ and $s(\alpha)=s(\beta)$ for all $(\alpha,\beta)\in F$.
\end{lemma}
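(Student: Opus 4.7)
The plan is to start from Corollary~\ref{cor-span}, which already writes any element of $\KP_R(\Lambda)$ as a finite $R$-linear combination
\[
a = \sum_{(\alpha, \beta) \in F_0} r_{\alpha, \beta}\, s_\alpha s_{\beta^*},
\]
with $s(\alpha) = s(\beta)$ and nonzero coefficients. The only outstanding issue is to arrange that every $\beta$ appearing lies in $\Lambda^{\leq n}$ for a single $n \in \N^k \setminus \{0\}$, and then to clean up the expression by collecting like terms and discarding any that cancel.

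First I would choose $n$ large enough to accommodate all second coordinates of $F_0$ while leaving room to invoke (KP4$'$). Concretely, set $n := e_1 + \bigvee_{(\alpha,\beta) \in F_0} d(\beta)$, which lies in $\N^k \setminus \{0\}$ and satisfies $n - d(\beta) \in \N^k \setminus \{0\}$ for every $(\alpha, \beta) \in F_0$. The strict-dominance buffer $e_1$ is there precisely so that (KP4$'$) can be applied at each $s(\beta)$ with exponent $n - d(\beta)$.

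Next I would expand each term individually. Using (KP2) to insert $p_{s(\beta)} = p_{s(\alpha)}$ between $s_\alpha$ and $s_{\beta^*}$, applying (KP4$'$) at $s(\beta)$ with exponent $n - d(\beta)$, and then recombining via (KP2) yields
\[
s_\alpha s_{\beta^*} = s_\alpha\, p_{s(\beta)}\, s_{\beta^*}
= s_\alpha \Big(\sum_{\gamma \in s(\beta)\Lambda^{\leq n - d(\beta)}} s_\gamma s_{\gamma^*}\Big) s_{\beta^*}
= \sum_{\gamma \in s(\beta)\Lambda^{\leq n-d(\beta)}} s_{\alpha\gamma}\, s_{(\beta\gamma)^*}.
\]
By \cite[Lemma~3.6]{RSY03} (used already in the proof of Proposition~\ref{prop:lspfamily}), each $\beta\gamma$ lies in $\Lambda^{\leq n}$, and clearly $s(\alpha\gamma) = s(\gamma) = s(\beta\gamma)$.

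Substituting these expansions into the formula for $a$ exhibits $a$ as a finite $R$-linear combination of terms $s_{\alpha'} s_{(\beta')^*}$ with $(\alpha', \beta') \in \Lambda \times \Lambda^{\leq n}$ and $s(\alpha') = s(\beta')$. Collecting like terms and discarding any $(\alpha', \beta')$-pair whose combined coefficient is zero produces the desired set $F$ with all $r_{\alpha', \beta'} \neq 0$; since $a \neq 0$, the resulting $F$ is non-empty. The only real subtlety is the initial choice of $n$, which must simultaneously be nonzero and leave room to invoke (KP4$'$) at each $s(\beta)$, and the $e_1$-buffer above handles both requirements uniformly.
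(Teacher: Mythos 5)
Your proof is correct and is essentially the paper's own argument: start from Corollary~\ref{cor-span}, then use (KP2) and (KP4$'$) at each $s(\beta)$ to replace $s_\alpha s_{\beta^*}$ by $\sum_{\gamma\in s(\beta)\Lambda^{\leq n-d(\beta)}} s_{\alpha\gamma}s_{(\beta\gamma)^*}$ and collect terms. The only difference is your choice $n=e_1+\bigvee d(\beta)$ in place of the paper's $n=\bigvee d(\nu)$ (with expansion only of terms having $d(\nu)<n$); this is a harmless variant which, if anything, is tidier since it lets you treat all terms uniformly and automatically covers the degenerate case where every $\beta$ is a vertex.
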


\begin{proof}
Let $0\neq a \in \KP_R(\Lambda)$. By Corollary~\ref{cor-span} we can write $a$ as a finite sum 
 \[a=\sum_{(\mu,\nu) \in G} r_{\mu,\nu}s_\mu s_{\nu^{*}},\]
where $G \subseteq \Lambda \times \Lambda$, $s(\mu)=s(\nu)$ and each $r_{\mu,\nu}\neq 0$ in $R$. 
Let $n=\vee_{(\mu,\nu) \in G} d(\nu)$. Consider $(\mu,\nu)\in G$ such that $d(\nu)<n$. Then, using (KP2) and  (KP4$'$), we get
\[
s_\mu s_{\nu^{*}} = s_\mu p_{s(\mu)} s_{\nu^{*}}=s_\mu\Big(\sum_{\lambda \in s(\mu)\Lambda^{\leq n-d(\nu)}} s_\lambda s_{\lambda^{*}}\Big) s_{\nu{*}}=\sum_{\lambda \in s(\mu)\Lambda^{\leq n-d(\nu)}}s_{\mu\lambda} s_{(\nu\lambda)^{*}}.
\]
Substituting back into the expression for $a$ and combining terms gives the result.
\end{proof}

Next we generalise \cite[Lemma~2.3(1)]{BaH} to graphs with possible sources.

\begin{lemma}
\label{lem:AaHRS4.3}
Let $0\neq a=\sum_{(\alpha,\beta) \in F} r_{\alpha,\beta}s_\alpha s_{\beta^*}\in\KP_R(\Lambda)$ be in normal form. For all $(\mu,\nu)\in F$, 
\begin{equation}\label{eq:4.2}
0 \neq s_{\mu^*}as_\nu = r_{\mu,\nu}p_{s(\mu)}+\sum_{(\alpha,\nu)\in F, d(\alpha)\neq d(\mu)} 
r_{\alpha,\nu}s_{\mu^*}s_\alpha,\end{equation}
and the $0$-graded component $r_{\mu,\nu}p_{s(\mu)}$ of $s_{\mu^*}as_\nu$ is nonzero.
\end{lemma}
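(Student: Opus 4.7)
The plan is to compute $s_{\mu^*} a s_\nu$ by distributing over the normal form, collapsing the right-hand end with (KP3$'$), and then reading off the $0$-graded component from the $\Z^k$-grading.

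First, I would write
\[
s_{\mu^*} a s_\nu = \sum_{(\alpha,\beta)\in F} r_{\alpha,\beta}\, s_{\mu^*} s_\alpha s_{\beta^*} s_\nu.
\]
Since every $\beta$ and $\nu$ lie in $\Lambda^{\leq n}$, relation (KP3$'$) collapses $s_{\beta^*} s_\nu$ to $\delta_{\beta,\nu} p_{s(\nu)}$, so only pairs with second coordinate $\nu$ contribute. Using $s(\alpha) = s(\nu)$ to absorb the projection into $s_\alpha$ via (KP2), this reduces to
\[
s_{\mu^*} a s_\nu = \sum_{(\alpha,\nu)\in F} r_{\alpha,\nu}\, s_{\mu^*} s_\alpha.
\]

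Next I would split this sum according to whether $d(\alpha) = d(\mu)$. When $d(\alpha) = d(\mu)$, apply Proposition~\ref{prop:lspfamily} with $n' = d(\mu)$: the requirement $\mu\alpha' \in \Lambda^{\leq d(\mu)}$ forces $d(\alpha') = 0$, so $\alpha' = s(\mu)$, and then $\alpha\beta' = \mu$ together with $d(\beta') = 0$ forces $\alpha = \mu$ (and $\beta' = s(\mu)$); hence $s_{\mu^*} s_\alpha = \delta_{\mu,\alpha}\, p_{s(\mu)}$ in this case. Splitting off the single surviving summand $\alpha = \mu$ yields precisely \eqref{eq:4.2}.

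For the second assertion, note that $r_{\mu,\nu}\, p_{s(\mu)}$ is homogeneous of degree $0$, whereas each remaining term $r_{\alpha,\nu}\, s_{\mu^*} s_\alpha$ with $d(\alpha) \neq d(\mu)$, when re-expressed via Proposition~\ref{prop:lspfamily} as a sum of $s_{\alpha'} s_{(\beta')^*}$ with $d(\alpha') - d(\beta') = d(\alpha) - d(\mu)$, lies in $\KP_R(\Lambda)_{d(\alpha) - d(\mu)}$, a component of nonzero degree. By the $\Z^k$-grading of Theorem~\ref{thm-KP}\eqref{item-b}, the $0$-graded part of $s_{\mu^*} a s_\nu$ is exactly $r_{\mu,\nu}\, p_{s(\mu)}$, which is nonzero by Theorem~\ref{thm-KP}\eqref{item-a} since $r_{\mu,\nu} \neq 0$; in particular $s_{\mu^*} a s_\nu \neq 0$.

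The main (minor) obstacle is that in the normal form only the $\beta$'s are constrained to lie in $\Lambda^{\leq n}$, while the $\alpha$'s range over all of $\Lambda$. One therefore cannot apply (KP3$'$) directly to $s_{\mu^*} s_\alpha$, and must instead invoke Proposition~\ref{prop:lspfamily} with the correct choice of $n'$ and a short degree count to show that the equal-degree case collapses to $\delta_{\mu,\alpha}\, p_{s(\mu)}$. Beyond this, the argument is routine distribution plus a grading argument.
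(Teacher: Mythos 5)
Your proof is correct and takes essentially the same route as the paper: distribute over the normal form, collapse $s_{\beta^*}s_\nu$ via (KP3$'$), absorb the projection with (KP2), kill the off-diagonal equal-degree terms, and read off the $0$-graded component from the $\Z^k$-grading of Theorem~\ref{thm-KP}. The only (harmless) difference is that where you invoke Proposition~\ref{prop:lspfamily} with $n'=d(\mu)$ plus a degree count to get $s_{\mu^*}s_\alpha=\delta_{\mu,\alpha}p_{s(\mu)}$ when $d(\alpha)=d(\mu)$, the paper observes directly that equal degree forces $\alpha,\mu\in\Lambda^{\leq d(\mu)}$, so (KP3$'$) applies immediately.
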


\begin{proof} Since $a$ is in normal form, $F\subseteq \Lambda\times \Lambda^{\leq n}$ for some $n \in \N^k \setminus\{0\}$. Fix $(\mu,\nu)\in F$. For all $(\alpha,\beta)\in F$, both $\beta$ and $\nu\in\Lambda^{\leq n}$, and hence (KP3$'$) gives $s_{\beta^*}s_\nu=\delta_{\beta,\nu}p_{s(\nu)}$. Thus
\begin{align*}
s_{\mu^*}as_\nu 
&=\sum_{(\alpha,\beta) \in F} r_{\alpha,\beta}s_{\mu^*}s_\alpha s_{\beta^*} s_\nu 
=\sum_{(\alpha,\nu) \in F} r_{\alpha,\nu}s_{\mu^*}s_\alpha p_{s(\nu)}\\
&= r_{\mu,\nu}p_{s(\mu)}+\sum_{(\alpha,\nu) \in F, \alpha\neq\mu } r_{\alpha,\nu}s_{\mu^*}s_\alpha
\end{align*}
by (KP2) since $(\alpha,\nu)\in F$ implies $s(\alpha)=s(\nu)$. If  there exists $\alpha$ such that $(\alpha,\nu)\in F$, $\alpha\neq\mu$ and $d(\alpha)=d(\mu)$, then (KP3$'$) implies that $s_{\mu^*}s_\alpha=0$. Thus 
\[
s_{\mu^*}as_\nu = r_{\mu,\nu}p_{s(\mu)}+\sum_{(\alpha,\nu)\in F, d(\alpha)\neq d(\mu)} r_{\alpha,\nu}s_{\mu^*}s_\alpha.
\]
It now follows that the $0$-graded component of $s_{\mu^*}as_\nu $ is $r_{\mu,\nu}p_{s(\mu)}$, which is nonzero by Theorem~\ref{thm-KP}\eqref{item-a}.  Now $s_{\mu^*}as_\nu$ is nonzero because its $0$-graded component is.
\end{proof}

\begin{proof}[Proof of Theorem~\ref{gut}]
  Let $0\neq a \in \KP_R(\Lambda)$.  Write
$a = \sum_{(\alpha, \beta) \in F} r_{\alpha, \beta}s_\alpha s_{\beta^{*}}$  in normal form. Let $(\mu,\nu)\in F$. By Lemma~\ref{lem:AaHRS4.3}, 
\[
0 \neq s_{\mu^*}as_\nu = r_{\mu,\nu}p_{s(\mu)}+\sum_{(\alpha,\nu)\in F, d(\alpha)\neq d(\mu)} 
r_{\alpha,\nu}s_{\mu^*}s_\alpha
\]
with $0$-graded component $r_{\mu,\nu}p_{s(\mu)}$.
Since $\phi$ is graded, 
$\phi(r_{\mu,\nu}p_{s(\mu)})$ is the $0$-graded component of 
$\phi(s_{\mu^*}as_{\nu})$.  Now $\phi(s_{\mu^*})\phi(a)\phi(s_{\nu})=\phi(s_{\mu^*}as_{\nu})\neq 0$ because its $0$-graded component $\phi(r_{\mu,\nu}p_{s(\mu})\neq 0$ by assumption. It follows that $\phi(a) \neq 0$ as well. Thus $\phi$ is injective.
\end{proof}

One immediate application of the Theorem~\ref{gut} is:

\begin{prop}
 Let $\Lambda$ be a locally convex, row-finite $k$-graph.  Then $\KP_{\C}(\Lambda)$ is isomorphic to 
a dense subalgebra of $C^*(\Lambda)$.
\end{prop}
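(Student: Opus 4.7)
The plan is to exhibit a dense-range $\C$-algebra homomorphism $\pi:\KP_{\C}(\Lambda)\to C^*(\Lambda)$ and then use Theorem~\ref{gut} to show $\pi$ is injective.

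First, I would take the universal Cuntz-Krieger $\Lambda$-family $(P,S)$ generating $C^*(\Lambda)$ (built in \cite{RSY03} for locally convex, row-finite $k$-graphs) and extend it by defining $S_{\lambda^*}:=S_\lambda^*$ (the Hilbert-space adjoint) for $\lambda\in\Lambda^{\neq 0}$, together with $P_v=P_v$. As discussed on page~\pageref{page5}, the geometric structure of a $C^*$-algebra promotes the Cuntz-Krieger relations to the Kumjian-Pask relations (KP1), (KP2), (KP3$'$), (KP4$'$) essentially for free; this needs a quick verification but is routine. Then the universal property from Theorem~\ref{thm-KP}\eqref{item-a} supplies a unique $\C$-algebra homomorphism $\pi:\KP_{\C}(\Lambda)\to C^*(\Lambda)$ with $\pi(p_v)=P_v$, $\pi(s_\lambda)=S_\lambda$, and $\pi(s_{\lambda^*})=S_\lambda^*$.

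Next, for density: by Corollary~\ref{cor-span}, $\KP_\C(\Lambda)=\lsp\{s_\alpha s_{\beta^*}:s(\alpha)=s(\beta)\}$, so the image $\pi(\KP_\C(\Lambda))$ equals $\lsp\{S_\alpha S_\beta^*:s(\alpha)=s(\beta)\}$. This is the standard spanning set of $C^*(\Lambda)$, and closing it gives $C^*(\Lambda)$; so the image is dense.

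For injectivity I would invoke Theorem~\ref{gut}. The target needs a $\Z^k$-grading compatible with $\pi$: the gauge action $\gamma:\T^k\to\operatorname{Aut}(C^*(\Lambda))$ satisfies $\gamma_z(S_\lambda)=z^{d(\lambda)}S_\lambda$, and the spectral subspaces $C^*(\Lambda)_n:=\{a:\gamma_z(a)=z^na\}$ endow the dense $\ast$-subalgebra $\mathcal{A}:=\lsp\{S_\alpha S_\beta^*\}$ with a genuine algebraic $\Z^k$-grading $\mathcal{A}_n:=\lsp\{S_\alpha S_\beta^*:d(\alpha)-d(\beta)=n\}$, the directness of the sum coming from averaging against $\gamma$. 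Viewed as a map $\pi:\KP_\C(\Lambda)\to\mathcal{A}$, it sends the $n$-th graded component into $\mathcal{A}_n$, so $\pi$ is graded. Finally $\pi(rp_v)=rP_v\neq 0$ for each $r\in\C\setminus\{0\}$ and $v\in\Lambda^0$, because the boundary-path representation (or any faithful representation) of $C^*(\Lambda)$ shows $P_v\neq 0$. Theorem~\ref{gut} then gives $\pi$ injective, hence an isomorphism onto the dense subalgebra $\mathcal{A}$.

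The only mildly delicate step is checking that the spectral-subspace decomposition really does endow $\mathcal{A}$ with an algebraic $\Z^k$-grading (as opposed to merely a topological one on $C^*(\Lambda)$); I would handle this by noting that if a finite sum $\sum a_n$ of homogeneous elements $a_n\in C^*(\Lambda)_n$ vanishes, then $a_n=\int_{\T^k} z^{-n}\gamma_z(\sum a_m)\,dz=0$ for each $n$, so the sum $\bigoplus_n \mathcal{A}_n$ is direct. Everything else is routine once this is in place.
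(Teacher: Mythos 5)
Your proposal is correct and follows essentially the same route as the paper: view the generating Cuntz-Krieger $\Lambda$-family in $C^*(\Lambda)$ as a Kumjian-Pask family (the paper checks (KP3$'$) via Corollary~\ref{cor-altKP3'}), use the universal property of Theorem~\ref{thm-KP} to map onto the dense spanning subalgebra, and deduce injectivity from Theorem~\ref{gut} using the $\Z^k$-grading obtained from the gauge action, with the same averaging argument showing the homogeneous decomposition is direct. The only cosmetic difference is that you derive multiplicativity of the grading from the spectral subspaces rather than from the computation with Corollary~\ref{cor-altKP3'}, which is equally valid.
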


\begin{proof}
 Let $(q,t)$ be a generating Cuntz-Krieger $\Lambda$-family in $C^*(\Lambda)$ as in \cite[Definition~3.3]{RSY03}.  Then $(q,t)$ is  a Kumjian-Pask $\Lambda$-family 
in $C^*(\Lambda)$. (To see this, recall that (KP1) and 
(KP2) hold, (KP4$'$) and (CK4) are the same and (KP3$'$) and (CK3) are the same via Corollary~\ref{cor-altKP3'}.)  Thus the universal property of 
$\KP_\C(\Lambda)$ of Theorem~\ref{thm-KP} gives  a homomorphism $\pi_{q,t}$ from $\KP_{\C}(\Lambda)$ onto the dense 
subalgebra \[A:=\lsp \{t_{\lambda}t_{\mu}^*:\lambda, \mu \in \Lambda\}\]
of $C^*(\Lambda)$.
To show that $\pi_{q,t}$ is injective, we will use the gauge-invariant uniqueness theorem, Theorem~\ref{gut}. 
Consider the subgroups \[
 A_n:= \lsp\{t_{\lambda}t_{\mu}^*:d(\lambda)- d(\mu)=n\}\quad(n\in\Z^k)\]
of $A$. A calculation using Corollary~\ref{cor-altKP3'} shows that $A_nA_m \subseteq A_{n+m}$. 
Since each spanning element $t_\mu t_{\nu}^*$ of $A$ belongs to $A_{d(\lambda)-d(\mu)}$, every element $a$ of $A$ can be written as a finite sum $\sum a_n$ with $a_n\in A_n$.  If $a_n\in A_n$ and  a finite sum $\sum a_n=0$, then each $a_n=0$ by the argument of the proof of \cite[Lemma~7.4]{ACaHR}, which uses the gauge action  of $\T^k$ on $C^*(\Lambda)$. Thus $\{A_n:n\in\Z^k\}$ is a grading of $A$.  It follows that 
$\pi_{q,t}$ is graded and hence is injective by Theorem~\ref{gut}.
\end{proof}

\begin{prop}
 \label{prop:aperiodic}
Let $\Lambda$ be a locally convex, row-finite  $k$-graph satisfying the aperiodicity condition \eqref{eq:aperiodic}.
Suppose $0\neq a=\sum_{(\alpha,\beta) \in F} 
r_{\alpha,\beta}s_\alpha s_{\beta^*}\in \KP_R(\Lambda)$ is in normal form. Let $(\mu, \nu) \in F$.
Then there exist $\sigma, \tau \in \Lambda$  such 
that $s_{\sigma^*}as_\tau=r_{\mu,\nu}p_{s(\mu)}$.
\end{prop}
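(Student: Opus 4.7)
The strategy is to start from the formula in Lemma~\ref{lem:AaHRS4.3}, which already isolates $r_{\mu,\nu}\,p_{s(\mu)}$ as one summand of $s_{\mu^*} a s_\nu$, and then to conjugate by $s_\lambda$ for a carefully chosen finite path $\lambda$ with $r(\lambda) = v := s(\mu)$ so that the remaining ``off-diagonal'' summands are killed. Applying Lemma~\ref{lem:AaHRS4.3} yields
\[
s_{\mu^*} a s_\nu \;=\; r_{\mu,\nu}\,p_v \;+\; \sum_j r_{\alpha_j,\nu}\, s_{\mu^*}s_{\alpha_j},
\]
where the sum runs over $(\alpha_j,\nu) \in F$ with $d(\alpha_j) \neq d(\mu)$. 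Setting $\sigma := \mu\lambda$ and $\tau := \nu\lambda$ (so that $p_v s_\lambda = s_\lambda$ and (KP2) can be used), one obtains
\[
s_{\sigma^*} a s_\tau \;=\; r_{\mu,\nu}\, p_{s(\lambda)} \;+\; \sum_j r_{\alpha_j,\nu}\, s_{(\mu\lambda)^*}s_{\alpha_j \lambda},
\]
so the proof reduces to choosing $\lambda$ for which $s_{(\mu\lambda)^*}s_{\alpha_j \lambda} = 0$ for every $j$ (and, for the literal form of the statement, to arranging that $s(\lambda) = s(\mu)$).

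To produce such a $\lambda$ I would invoke the aperiodicity condition \eqref{eq:aperiodic} at $v$ to obtain $x \in v\Lambda^{\leq \infty}$ separating paths with source $v$, and set $\lambda := x(0,N)$ for some $N \in \N^k$ with $N \leq d(x)$ still to be determined. By Proposition~\ref{prop:lspfamily}, each product $s_{(\mu\lambda)^*}s_{\alpha_j \lambda}$ equals $\sum s_{\gamma'} s_{\delta'^*}$ summed over $(\gamma',\delta')$ with $\mu\lambda\gamma' = \alpha_j\lambda\delta'$ at level $(d(\mu)\vee d(\alpha_j)) + d(\lambda)$. Using the unique factorisation property of $\Lambda$, the existence of any such $(\gamma',\delta')$ is equivalent to a shift-periodicity relation compatible with $(\mu,\alpha_j)$, roughly that $\lambda$ begin with some path $\eta$ satisfying a relation of the form $\eta\lambda = \lambda\zeta$. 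Since $\mu \neq \alpha_j$ and $s(\mu)=s(\alpha_j)=v$, aperiodicity gives $\mu x \neq \alpha_j x$, which for $N$ componentwise large enough forces this shift-periodicity to fail; because there are only finitely many bad $j$, a single large $N$ kills every bad summand at once.

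The main obstacle is extracting usable content from the inequality $\mu x \neq \alpha_j x$ when $d(\mu) \neq d(\alpha_j)$, since then $\mu x$ and $\alpha_j x$ are boundary paths of different degrees and so are trivially unequal. I would handle this by working on the common-extension side of the picture: any putative $\mu\lambda\gamma' = \alpha_j\lambda\delta'$ factorises through a common extension of $\mu$ and $\alpha_j$ at level $d(\mu)\vee d(\alpha_j)$ and so determines a finite shift equation on $\lambda$; demanding this equation hold for every $N$ would force a genuine periodicity of the infinite segment of $x$, which aperiodicity rules out. Extra care is needed in the higher-rank case and in the coordinates where $d(x)$ is finite (i.e.\ where $x$ runs into a source), but once $\lambda$ is in hand, substitution into the displayed identity gives $s_{\sigma^*} a s_\tau = r_{\mu,\nu}\, p_{s(\lambda)}$, matching $r_{\mu,\nu}\,p_{s(\mu)}$ under the natural identification (and in any event sufficient for the downstream Cuntz-Krieger argument in Theorem~\ref{ckthm}).
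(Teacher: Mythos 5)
Your set-up coincides with the paper's: start from Lemma~\ref{lem:AaHRS4.3}, pick a separating boundary path $x\in s(\mu)\Lambda^{\leq\infty}$ as in \eqref{eq:aperiodic}, conjugate by $s_\lambda$ with $\lambda=x(0,N)$, $N\leq d(x)$, and take $\sigma=\mu\lambda$, $\tau=\nu\lambda$ (your observation that one really obtains $r_{\mu,\nu}p_{s(\lambda)}$ rather than literally $r_{\mu,\nu}p_{s(\mu)}$ is harmless, and the paper's own choice has the same feature). The problem is that the heart of the proof --- producing a concrete $N$ for which every cross term $s_{(\mu\lambda)^*}s_{\alpha\lambda}$ vanishes --- is exactly the part you leave as a sketch, and the mechanism you propose does not go through in general. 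First, the hypothesis is the separation condition \eqref{eq:aperiodic}; your plan is to convert a surviving common extension $\mu\lambda\gamma'=\alpha\lambda\delta'$ for all $N$ into a ``shift-periodicity of the infinite segment of $x$'', which is essentially a no-local-periodicity formulation whose equivalence with \eqref{eq:aperiodic} is itself a nontrivial theorem (cf.\ Lemma~\ref{lem-pain} and the references cited there), not something to be waved through. Second, and more seriously, when $d(x)$ is finite in the coordinates where $d(\alpha)\neq d(\mu)$ --- precisely the situation the boundary-path machinery exists to handle, e.g.\ finite graphs with sources satisfying \eqref{eq:aperiodic} trivially --- there is no infinite tail of $x$ to be periodic, and ``$N$ componentwise large enough'' is unavailable because $N\leq d(x)$. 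Third, for $k\geq 2$ with $d(\mu)$ and $d(\alpha)$ incomparable, the asserted reduction to a relation of the form $\eta\lambda=\lambda\zeta$ is not what unique factorisation gives. These are exactly the points you defer with ``extra care is needed'', so the central step is a genuine gap.

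The paper avoids all of this with a finite argument that never reformulates aperiodicity: for each $\alpha$ in $G=\{\alpha:(\alpha,\nu)\in F,\ d(\alpha)\neq d(\mu)\}$, the separating property gives $\mu x\neq\alpha x$, hence an $m_\alpha\in\N^k$ with $(\mu x)(0,m_\alpha)\neq(\alpha x)(0,m_\alpha)$; put $m=\bigvee_\alpha m_\alpha$ and $N=m\wedge d(x)$. The inequalities $m_\alpha\leq N+d(\mu)$ and $m_\alpha\leq N+d(\alpha)$ (coming from $m_\alpha\leq d(\mu x)\wedge d(\alpha x)$) let one factor off the degree-$m_\alpha$ initial segments, so a nonzero cross term would force $s_{(\mu x)(0,m_\alpha)^*}s_{(\alpha x)(0,m_\alpha)}\neq 0$; but these two truncations are distinct elements of $\Lambda^{\leq m_\alpha}$ (initial segments of boundary paths lie in $\Lambda^{\leq n}$, by \cite{RSY03}), so (KP3$'$) kills the product outright. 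To repair your argument you would either have to prove the relevant implication from \eqref{eq:aperiodic} to no-local-periodicity and still treat the finite-$d(x)$ coordinates separately, or simply replace the common-extension/periodicity analysis by this direct comparison of initial segments.
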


\begin{proof}  Let $(\mu,\nu)\in F$. By Lemma~\ref{lem:AaHRS4.3}, 
\begin{equation}
\label{eq:Lem4.4}
0 \neq s_{\mu^*}as_\nu = r_{\mu,\nu}p_{s(\mu)}+\sum_{(\alpha,\nu)\in F, d(\alpha)\neq d(\mu)} 
r_{\alpha,\nu}s_{\mu^*}s_\alpha.
\end{equation}
If $G:=\{\alpha: (\alpha,\nu)\in F, d(\alpha)\neq d(\mu)\}=\emptyset$ then we can take $\sigma=\mu$ and $\tau=\nu$, and we are done. So suppose $G\neq \emptyset$.

Choose $y \in s(\mu)\Lambda^{\leq \infty}$ such that \eqref{eq:aperiodic} holds. Then for each
$\alpha \in G$, $\alpha y \neq \mu y$. 
So there exists $m_{\alpha}\in\N^k$ such that
$(\alpha y)(0,m_{\alpha}) \neq (\mu y)(0,m_{\alpha})$.  Let 
$m:=  \bigvee_{\alpha \in G} m_\alpha$.    For later use we note that since $m_\alpha \leq d(y) + d(\alpha)$, taking the meet of both sides with $m$ we get
\[m_\alpha=m \wedge m_{\alpha} \leq m \wedge (d(y) + d(\alpha)) \leq m \wedge d(y) + d(\alpha);
\]
the same argument gives $m_{\alpha} \leq m \wedge d(y) + d(\mu)$.

We would like to use $y(0, m)$ now, but since $y$ is a boundary path this may not be well-defined, so we will use $y(0, m\wedge d(y))$ instead.
Then, using (\ref{eq:Lem4.4}), 
\begin{align*}
s_{y(0,m\wedge d(y))^*}&(s_{\mu^*}as_\nu)s_{y(0,m\wedge d(y))}\\
&=r_{\mu,\nu}s_{y(0,m\wedge d(y))^*}p_{s(\mu)}s_{y(0,m\wedge d(y))} + 
\sum_{\alpha \in G}r_{\alpha,\nu}s_{y(0,m\wedge d(y))^*}s_{\mu^*}s_\alpha s_{y(0,m\wedge d(y))}\\
&=r_{\mu,\nu}p_{s(\mu)}+ \sum_{\{\alpha \in G: \alpha \neq \mu\}} 
r_{\alpha,\nu}s_{(\mu y(0,m\wedge d(y)))^*}s_{\alpha y(0,m\wedge d(y))} \\
\intertext{because $r(y)=s(\mu)$ and (KP2), and then (KP3), all on the first summand. After composing paths this}
&=r_{\mu,\nu}p_{s(\mu)}+ \sum_{\alpha \in G}
 r_{\alpha,\nu}s_{(\mu y)(0,m\wedge d(y)+d(\mu))^*}s_{(\alpha y)(0,m\wedge d(y)+d(\alpha))}.
\end{align*}
We claim that for every $\alpha \in G$, $s_{(\mu y)(0,m\wedge d(y)+d(\mu))^*}s_{(\alpha y)(0,m\wedge d(y)+d(\alpha))}=0$. 
To see this, by way of contradiction, suppose there
exists $\alpha \in G$ such that  
\[s_{(\mu y)(0,m\wedge d(y)+d(\mu))^*}s_{(\alpha y)(0,m\wedge d(y)+d(\alpha))} \neq 0.\] 
Since $m_\alpha\leq m\wedge d(y)+d(\mu)$ and $m_\alpha\leq m\wedge d(y)+d(\alpha)$  we must have  $s_{(\mu y)(0,m_\alpha)^*}s_{(\alpha y)(0,m_\alpha)} \neq 0$. But $(\mu y)(0,m_\alpha)$ and ($\alpha y)(0,m_\alpha)\in\Lambda^{\leq m_\alpha}$, and hence $(\mu y)(0,m_\alpha)=(\alpha y)(0, m_\alpha)$ by (KP3$'$), 
contradicting our choice of $m_{\alpha}$.  This proves the claim. 
Therefore
\[s_{y(0,m\wedge d(y))^*}(s_{\mu^*}as_\nu)s_{y(0,m\wedge d(y))}=r_{\mu,\nu}p_{s(\mu)}.\]
The proposition follows with $\sigma:=\mu y(0,m\wedge d(y))$ and  
$\tau:=\nu y(0,m\wedge d(y))$.
\end{proof}

\begin{proof}[Proof of Theorem~\ref{ckthm}]  Let $0\neq a \in \KP_R(\Lambda)$.  Write
$a = \sum_{(\alpha, \beta) \in F} r_{\alpha, \beta}s_\alpha s_{\beta^{*}}$  in normal form.
Fix $(\mu,\nu)\in F$. Since $\Lambda$ satisfies the aperiodicity condition \eqref{eq:aperiodic}, 
by Proposition~\ref{prop:aperiodic} 
there exist $\sigma, \tau \in \Lambda$ such that 
$s_\sigma^*as_\tau = r_{\mu, \nu}p_{s(\mu)}$. Now
$\phi(s_\sigma^*)\phi(a)\phi(s_\tau)=\phi(s_\sigma^*as_\tau)=\phi(r_{\mu, \nu}p_{s(\mu)}) \neq 0$ by assumption, and hence  $\phi(a) \neq 0$  as well.  Thus $\phi$ is injective.
\end{proof}



\section{Examples of $2$-graphs with sources\\ and applications to rank-$2$ Bratteli diagrams}\label{sec-Bratteli}

Throughout this section, $R$ is a commutative ring with $1$.
Let  $\Lambda$ be a $2$-graph.  We refer to the morphisms of degree $(n_1,0)$ as blue paths, 
to the morphisms of degree $(0,n_2)$ as red paths, and  write $\Lambda^\blue$ and $\Lambda^\red$ for the collection of blue and red paths, respectively.  
We say a path $\lambda \in \Lambda$ with $d(\lambda)\neq 0$ is a \emph{cycle} if
$r(\lambda)=s(\lambda)$ and for $0 <n\ < d(\lambda)$,  $\lambda(n) \neq s(\lambda)$.  
We say a cycle $\lambda$ is \emph{isolated}
if for every $0 < n\ \leq d(\lambda)$,  the sets 
\[
 r(\lambda)\Lambda^n \setminus \{\lambda(0,n)\} \text{ and }  s(\lambda)\Lambda^n \setminus \{\lambda(d(\lambda)-n,d(\lambda))\}
\]
are empty.

\begin{prop}\label{prop3.5}
Let $\Lambda$ be a finite $2$-graph such that
\begin{equation}\label{3.1}
\ \hfill
\parbox{0.9\textwidth}{
$\Lambda^\blue$ contains no cycles and each vertex $v \in \Lambda^0$
is the range of an isolated cycle in $\Lambda^\red$.
}\end{equation}
Let $S$ be the set of sources in $\Lambda^\blue$. Suppose the vertices in $S$ all lie on a single isolated cycle in $\Lambda^\red$. Let $Y$ denote the set $\Lambda^\blue S$ of blue paths with source in $S$.  
Then $\KP_R(\Lambda)$ is isomorphic to $M_Y(R[x,x^{-1}])$, where $R[x,x^{-1}]$ is the ring of Laurent polynomials over $R$.
\end{prop}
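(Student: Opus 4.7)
The strategy is to construct a Kumjian-Pask family $(Q,T)$ in $A := M_Y(R[x,x^{-1}])$, apply Theorem~\ref{thm-KP} to get an $R$-algebra homomorphism $\pi: \KP_R(\Lambda) \to A$, check that $\pi$ is surjective, and invoke the graded-uniqueness theorem (Theorem~\ref{gut}) for injectivity. Two structural facts in $\KP_R(\Lambda)$ drive the construction. First, since $\Lambda^\blue$ has no cycles and $\Lambda$ is finite, iterating (KP4') in the blue direction yields $p_v = \sum_{\gamma \in vY} s_\gamma s_{\gamma^*}$ for every $v \in \Lambda^0$, where $vY := \{\gamma \in Y : r(\gamma) = v\}$. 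Second, for each $v \in S$, local convexity and the isolated red cycle $c^{(v)}$ at $v$ force $v\Lambda^{e_2}$ to be the single edge $c^{(v)}(0,e_2)$, so Lemma~\ref{lem-kp4alt} and iteration give $p_v = s_{c^{(v)}}s_{(c^{(v)})^*}$, and (KP3') gives $p_v = s_{(c^{(v)})^*}s_{c^{(v)}}$. Hence $s_{c^{(v)}}$ is a two-sided unit in the corner $p_v \KP_R(\Lambda) p_v$.

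To build $(Q,T)$ in $A$, let $N$ be the length of the isolated red cycle through $S$, enumerate its vertices $v_0,\dots,v_{N-1}$, and set $Q_v := \sum_{\gamma\in vY} e_{\gamma,\gamma}$. For blue $\mu$ with $r(\mu)=u, s(\mu)=w$, define $T_\mu := \sum_{\delta\in wY} e_{\mu\delta,\delta}$ and $T_{\mu^*} := \sum_{\delta\in wY} e_{\delta,\mu\delta}$. For a red path $\nu$ with $r(\nu)=u, s(\nu)=w$ and each $\delta \in wY$, use the factorization property to write $\nu\delta = \delta'(\nu,\delta)\,\nu'(\nu,\delta)$ with $\delta'(\nu,\delta)$ blue and $\nu'(\nu,\delta)$ red; because $s(\delta) \in S$ and the cycle through $s(\delta)$ is isolated, $\nu'(\nu,\delta)$ decomposes uniquely as $(c^{(s(\delta))})^{k(\nu,\delta)}\,\rho(\nu,\delta)$ for a unique integer $k(\nu,\delta) \ge 0$ and a unique ``residue'' red path $\rho(\nu,\delta)$ of degree strictly less than $(0,N)$. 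Define $T_\nu$ so that its $(\widetilde{\delta}(\nu,\delta),\delta)$-entry is $x^{k(\nu,\delta)}$, where $\widetilde{\delta}(\nu,\delta) \in uY$ is obtained from $\delta'(\nu,\delta)$ by further blue extension (using the first structural fact) so as to land in $Y$; similarly for ghosts. Grade $A$ over $\Z^2$ by giving $e_{\gamma,\delta}$ (with $s(\gamma)=v_a,\,s(\delta)=v_b$) degree $(d(\gamma)_1 - d(\delta)_1,\,b - a)$ and $x$ degree $(0,N)$, and check that this is compatible with matrix multiplication.

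Verification of (KP1), (KP3'), and (KP4') is routine from the structural facts and the uniqueness of red paths at vertices with an isolated cycle; (KP2) for red generators is the main computation and follows from the commuting squares of the $2$-graph combined with the additive behaviour of the cycle counter $k(\nu,\delta)$. Theorem~\ref{thm-KP} then delivers $\pi$, and $\pi$ is $\Z^2$-graded by construction. Surjectivity is seen by realizing each $e_{\gamma,\delta} = \pi(s_\gamma s_\rho s_{\delta^*})$ for an appropriate red path $\rho$ along the cycle through $S$, and $x^k Q_{v_0} = \pi(s_{c^{(v_0)}}^k)$. Injectivity follows from Theorem~\ref{gut}, since $\pi(r p_v) = r Q_v$ is a nonzero sum of distinct matrix units in $M_Y(R[x,x^{-1}])$ for every nonzero $r \in R$ and every $v \in \Lambda^0$. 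The main obstacle will be choosing the red-generator definitions so that (KP2) holds across red-red compositions: the exponents $k(\nu,\delta)$ must compose consistently, which ultimately expresses the identity $s_{c^{(v)}} s_{(c^{(v)})^*} = p_v$ in matrix form, while the blue ``padding'' $\widetilde{\delta}(\nu,\delta)$ and the $\Z^2$-grading on $A$ must be chosen compatibly with the commuting-square structure of $\Lambda$ and with the chosen enumeration of $S$.
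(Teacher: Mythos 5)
Your overall strategy differs from the paper's: you build a Kumjian--Pask family directly in $M_Y(R[x,x^{-1}])$ and aim to use the universal property plus the graded-uniqueness theorem, whereas the paper works inside $\KP_R(\Lambda)$ --- it fixes $\dagger\in S$ and a marked red edge $e_\dagger$, builds matrix units $\theta(\alpha,\beta)=s_\alpha s_{\nu(\alpha,\beta)}^{\phantom{*}}s_{\beta^*}$ (or with $s_{\nu(\alpha,\beta)^*}$) using connecting red paths that \emph{avoid} $e_\dagger$, applies the general corner lemma (Lemma~\ref{lemma-rowan}) to get $\KP_R(\Lambda)\cong M_Y(p_\dagger\KP_R(\Lambda)p_\dagger)$, and identifies the corner with $R[x,x^{-1}]$ via the Leavitt algebra of a single loop. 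Your route is viable in principle, but as written it has a genuine error exactly at the point you defer: the definition of $T_\nu$ for red $\nu$. You set the exponent $k(\nu,\delta)$ to be the number of complete traversals of the cycle \emph{based at} $s(\delta)$, with a residue of degree $<(0,N)$. This assignment is not additive under composition, so (KP2) fails. Concretely, take $\Lambda$ to be the $2$-graph with no blue edges whose red subgraph is a single isolated cycle $f_1f_2$ of length $N=2$ through $v_0,v_1$ (so $S=Y=\{v_0,v_1\}$): your formulas give $T_{f_1}=e_{v_0,v_1}$, $T_{f_2}=e_{v_1,v_0}$ (each edge winds $0$ times), but $T_{f_1f_2}=x\,e_{v_0,v_0}$ (one full traversal), so $T_{f_1}T_{f_2}=e_{v_0,v_0}\neq T_{f_1f_2}$. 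If instead you only define $T$ on edges by your formula and extend multiplicatively, then no generator ever carries an $x$, the image is $M_Y(R)$, and surjectivity (and gradedness, hence your appeal to Theorem~\ref{gut}) collapses. So the claim that (KP2) ``follows from the commuting squares combined with the additive behaviour of the cycle counter'' is precisely what is false for your counter; it does not reduce to $s_{c^{(v)}}s_{(c^{(v)})^*}=p_v$.

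The repair is the marked-edge device that the paper (following the proof of \cite[Lemma~3.8]{PRRS}) builds in from the start: fix $\dagger\in S$ and the edge $e_\dagger$ of the cycle with $r(e_\dagger)=\dagger$, and let the exponent attached to the red part of $\nu\delta$ be the number of times it traverses $e_\dagger$; traversal counts are additive under concatenation, so (KP2) then holds, and exactly one edge of the cycle carries the variable $x$, which restores both surjectivity and the compatibility of the $\Z^2$-grading (your grading on $M_Y(R[x,x^{-1}])$ is fine, but it must be matched to this choice of enumeration of the cycle). Two smaller points: under \eqref{3.1} local convexity forces every vertex of the cycle through $S$ to lie in $S$, so the blue part $\delta'$ of $\nu\delta$ already lies in $Y$ and your ``further blue extension'' step is vacuous; and the factorisation should read $\nu'=\rho\,(c^{(s(\delta))})^{k}$ (cycles applied at the source) rather than $(c^{(s(\delta))})^{k}\rho$. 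With the marked-edge exponent and these adjustments your argument can be completed, but as it stands the construction of the family --- the heart of the proof --- does not satisfy (KP2).
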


Any graph satisfying \eqref{3.1}  is locally convex \cite[page~141]{PRRS}.
Proposition~\ref{prop3.5} is very similar to Proposition~3.5 of \cite{PRRS}, which, with the same hypotheses on $\Lambda$ and $R=\C$, gives an isomorphism of $C^*(\Lambda)$ onto $M_Y(C(\T))\cong M_Y(\C)\otimes C(\T)$. The proof of \cite[Proposition~3.5]{PRRS} finds a family $\{\theta(\alpha,\beta):\alpha,\beta\in Y\}$ of matrix units  and a unitary $U$ in $C^*(\Lambda)$ such that $\theta(\alpha,\beta)U=U\theta(\alpha,\beta)$ for all $\alpha, \beta\in Y$.  This gives a homomorphism $\phi:M_Y(\C)\otimes C(\T)$ into $C^*(\Lambda)$.  The argument then shows that $U$ has full spectrum, from which it follows that $\phi$ is injective, and then $\phi$ is shown to be surjective.
Since $R$ is a ring with $1$, the  matrix units $\{\theta(\alpha,\beta):\alpha,\beta\in Y\}$ (and the unitary $U$) live naturally in $\KP_R(\Lambda)$. But in the algebraic setting, the spectral argument is not available; we  use the following lemma instead.
 
\begin{lemma}\label{lemma-rowan}
Let $W$ be a ring with $1$ where $W$ contains a set $\{\theta(\alpha,\beta):\alpha,\beta \in Y\}$ of matrix units. Let $\alpha_0\in Y$ and $D:=\theta(\alpha_0,\alpha_0) W\theta(\alpha_0,\alpha_0)$. Then the map $w\mapsto (\theta(\alpha_0,\alpha)w\theta(\beta,\alpha_0))_{\alpha,\beta}$ is an isomorphism of $W$ onto $M_Y(D)$.
\end{lemma}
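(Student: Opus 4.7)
The plan is to exhibit the inverse map explicitly and then verify routine compatibility relations, using the standard identities for matrix units: $\theta(\alpha,\beta)\theta(\gamma,\delta) = \delta_{\beta,\gamma}\theta(\alpha,\delta)$ and (implicit in the statement, since the codomain is all of $M_Y(D)$ rather than a corner) completeness $\sum_{\alpha \in Y}\theta(\alpha,\alpha) = 1$. In the intended application $Y = \Lambda^{\mathrm{blue}}S$ is finite because $\Lambda$ is finite, so there are no convergence worries about the sum.

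First, I would name the map $\Phi\colon W \to M_Y(D)$ by $\Phi(w)_{\alpha,\beta} := \theta(\alpha_0,\alpha)\, w\, \theta(\beta,\alpha_0)$ and check that each entry really lies in $D$: writing $e := \theta(\alpha_0,\alpha_0)$, the identities $e\,\theta(\alpha_0,\alpha) = \theta(\alpha_0,\alpha)$ and $\theta(\beta,\alpha_0)\,e = \theta(\beta,\alpha_0)$ give $\Phi(w)_{\alpha,\beta} = e\,\Phi(w)_{\alpha,\beta}\,e \in eWe = D$. Additivity and $R$-linearity of $\Phi$ are obvious. For multiplicativity, I compute
\[
(\Phi(w)\Phi(w'))_{\alpha,\beta} = \sum_{\gamma \in Y} \theta(\alpha_0,\alpha)\,w\,\theta(\gamma,\alpha_0)\theta(\alpha_0,\gamma)\,w'\,\theta(\beta,\alpha_0) = \theta(\alpha_0,\alpha)\,w\Bigl(\sum_{\gamma}\theta(\gamma,\gamma)\Bigr)w'\,\theta(\beta,\alpha_0),
\]
which collapses to $\Phi(ww')_{\alpha,\beta}$ using $\sum_\gamma \theta(\gamma,\gamma) = 1$. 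The identity $1 \in W$ is sent to $(\theta(\alpha_0,\alpha)\theta(\beta,\alpha_0))_{\alpha,\beta} = (\delta_{\alpha,\beta}\,\theta(\alpha_0,\alpha_0))_{\alpha,\beta}$, which is the identity of $M_Y(D)$.

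For injectivity, suppose $\Phi(w) = 0$. Then using $1 = \sum_{\alpha}\theta(\alpha,\alpha) = \sum_\alpha \theta(\alpha,\alpha_0)\theta(\alpha_0,\alpha)$,
\[
w = 1\cdot w\cdot 1 = \sum_{\alpha,\beta \in Y} \theta(\alpha,\alpha_0)\bigl(\theta(\alpha_0,\alpha)\,w\,\theta(\beta,\alpha_0)\bigr)\theta(\alpha_0,\beta) = \sum_{\alpha,\beta}\theta(\alpha,\alpha_0)\Phi(w)_{\alpha,\beta}\theta(\alpha_0,\beta) = 0.
\]
For surjectivity, given $(d_{\alpha,\beta}) \in M_Y(D)$, I set $w := \sum_{\alpha,\beta} \theta(\alpha,\alpha_0)\,d_{\alpha,\beta}\,\theta(\alpha_0,\beta)$ and verify, using $d_{\alpha,\beta} = e\,d_{\alpha,\beta}\,e$ and the matrix-unit relations to kill all but one term, that $\Phi(w)_{\mu,\nu} = d_{\mu,\nu}$.

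There is no genuine obstacle here; the whole argument is a formal manipulation of matrix-unit identities. The only point that deserves attention is the implicit completeness hypothesis $\sum_\alpha \theta(\alpha,\alpha) = 1$, which is both needed in every step above and automatic in the application (where the $\theta(\alpha,\alpha)$ will be a full set of mutually orthogonal vertex-projections summing to the identity of $\KP_R(\Lambda)$ built from $p$ and $s$).
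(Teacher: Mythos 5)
Your proof is correct, and it is the standard argument: the paper does not write out a proof at all, but simply cites \cite[Proposition~13.9]{rowan}, whose content is exactly the verification you give. One point worth endorsing explicitly: you are right that the lemma as stated silently assumes the matrix units are \emph{complete}, i.e.\ $\sum_{\alpha\in Y}\theta(\alpha,\alpha)=1$ (without this, multiplicativity and injectivity of $\Phi$ both fail, e.g.\ for $W$ a direct sum with matrix units living in one summand), and this hypothesis is indeed available where the lemma is used --- in the proof of Proposition~\ref{prop3.5} the identity $\sum_{\alpha\in Y}\theta(\alpha,\alpha)=\sum_{v\in\Lambda^0}p_v=1$ is checked before Lemma~\ref{lemma-rowan} is invoked, and $Y$ is finite there so the sum makes sense.
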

The proof of Lemma~\ref{lemma-rowan} is straightforward, see, for example, \cite[Proposition~13.9]{rowan}. In the proof of Proposition~\ref{prop3.5} we will apply Lemma~\ref{lemma-rowan} with $\theta(\alpha_0,\alpha_0)=p_\dagger$,  where $\dagger$ is a vertex in the set $S$ of sources in $\Lambda^\blue$.

\begin{lemma}\label{lem-badapple}
Let $\Lambda$ be a $2$-graph satisfying the hypotheses of Proposition~\ref{prop3.5}. Fix a vertex $\dagger\in S$ and let $\mu$ be the unique red path of  least nonzero degree with range and source $\dagger$. Then there is an isomorphism of $R[x,x^{-1}]$ onto $p_{\dagger}\KP_R(\Lambda)p_{\dagger}$ such that $1\mapsto p_{\dagger}$, $x\mapsto s_\mu$ and $x^{-1}\mapsto s_{\mu^*}$.
\end{lemma}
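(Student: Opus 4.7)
The plan is to exhibit $s_\mu$ and $s_{\mu^*}$ as mutually inverse elements of the corner $p_\dagger\KP_R(\Lambda)p_\dagger$, use the universal property of the Laurent polynomial ring to produce the homomorphism $\phi$, then prove surjectivity by collapsing every spanning product in the corner to a power of $s_\mu^{\pm 1}$, and injectivity by exploiting the $\Z^2$-grading.

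First I would note that $s_\mu, s_{\mu^*}\in p_\dagger\KP_R(\Lambda)p_\dagger$ because $r(\mu)=s(\mu)=\dagger$. Relation (KP3$'$) gives $s_{\mu^*}s_\mu=p_\dagger$. For the other product, apply (KP4$'$) at $\dagger$ with exponent $d(\mu)=(0,n_2)$: since $\dagger$ is a blue source the first coordinate of any $\lambda\in\dagger\Lambda^{\leq d(\mu)}$ must be zero, and since every vertex on the isolated red cycle has a further outgoing red edge, the second-coordinate condition in the definition of $\Lambda^{\leq d(\mu)}$ forces $d(\lambda)=(0,n_2)$; isolation of $\mu$ then collapses $\dagger\Lambda^{\leq d(\mu)}$ to $\{\mu\}$, so $s_\mu s_{\mu^*}=p_\dagger$. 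Thus $s_\mu$ is a unit in the unital ring $p_\dagger\KP_R(\Lambda)p_\dagger$ with inverse $s_{\mu^*}$, and the universal property of $R[x,x^{-1}]$ produces the required unital $R$-algebra homomorphism $\phi$.

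For surjectivity, by Corollary~\ref{cor-span} the corner is spanned by products $s_\alpha s_{\beta^*}$ with $r(\alpha)=r(\beta)=\dagger$ and $s(\alpha)=s(\beta)$; the blue-source hypothesis at $\dagger$ forces both $\alpha$ and $\beta$ to be purely red. Induction on length, combined with isolation of $\mu$, shows that each vertex on the red cycle through $\dagger$ has a unique outgoing red edge and hence there is a unique red path $\alpha_a$ of degree $(0,a)$ with range $\dagger$ for every $a\geq 0$, satisfying $\alpha_{a+kn_2}=\mu^k\alpha_a$. The intermediate cycle vertices $v_1,\dots,v_{n_2-1}$ must also be pairwise distinct, for a coincidence $v_i=v_j$ with $0<i<j<n_2$ would propagate forward along the deterministic red walk to force some $v_\ell=\dagger$ with $0<\ell<n_2$, contradicting the cycle condition. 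Consequently $s(\alpha_a)=s(\alpha_b)$ iff $n_2\mid(a-b)$. Writing $a=b+kn_2$ with $k\geq 0$, one has $s_{\alpha_a}s_{\alpha_b^*}=s_\mu^k(s_{\alpha_b}s_{\alpha_b^*})$, and the same (KP4$'$) argument at $\dagger$ with exponent $(0,b)$ gives $s_{\alpha_b}s_{\alpha_b^*}=p_\dagger$, so the spanning element equals $s_\mu^k=\phi(x^k)$. The case $k<0$ is symmetric, yielding $\phi(x^k)=s_{\mu^*}^{-k}$, so $\phi$ is surjective.

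Injectivity will follow from the $\Z^2$-grading of Theorem~\ref{thm-KP}\eqref{item-b}: since $p_\dagger$ is homogeneous of degree $0$, the corner is graded, and $s_\mu^k$ lies in degree $(0,kn_2)$, so distinct values of $k$ live in distinct homogeneous components. A relation $\sum_k r_k s_\mu^k=0$ therefore forces each $r_k s_\mu^k=0$; right-multiplying by $s_{\mu^*}^k$ collapses this to $r_k p_\dagger=0$, and Theorem~\ref{thm-KP}\eqref{item-a} gives $r_k=0$. The main obstacle is the path-combinatorics in the surjectivity step---deducing uniqueness of red paths from $\dagger$ of every length and distinctness of the intermediate cycle vertices from isolation alone---after which (KP4$'$) collapses every spanning product to a monomial in $s_\mu^{\pm 1}$ and the grading handles injectivity essentially for free.
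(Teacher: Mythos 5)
Your proof is correct, but it takes a somewhat different route from the paper's. For the existence of the homomorphism you invoke the universal property of $R[x,x^{-1}]$ directly, viewing $s_\mu$ as a unit of the corner (having checked $s_{\mu^*}s_\mu=p_\dagger$ by (KP3$'$) and $s_\mu s_{\mu^*}=p_\dagger$ by (KP4$'$) together with isolation), and you get injectivity from the $\Z^k$-grading of $\KP_R(\Lambda)$ itself: the powers $s_\mu^{k}$ sit in distinct homogeneous components, so a vanishing Laurent combination forces $r_kp_\dagger=0$, whence $r_k=0$ by Theorem~\ref{thm-KP}(\ref{item-a}). The paper instead factors through the Leavitt path algebra $L_R(E)$ of the single-loop graph, observing that $\{p_\dagger,s_\mu,s_{\mu^*}\}$ is a Leavitt $E$-family, and quotes Tomforde's graded-uniqueness theorem for injectivity; your argument is more self-contained. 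For surjectivity the paper uses the trick \eqref{mu} of appending to $\alpha$ and $\beta$ the unique red path $\nu$ returning to $\dagger$, so that only red cycles at $\dagger$ (i.e.\ powers of $\mu$) need be considered and the pairwise distinctness of the intermediate cycle vertices is never needed; you instead classify all red paths $\alpha_a$ with range $\dagger$ and prove that distinctness by your propagation argument, which is valid (the determinism of the red walk at the intermediate vertices follows either from isolation of $\mu$ by composing back to $\dagger$, as you indicate, or more simply from hypothesis \eqref{3.1} applied at each vertex). Two harmless slips: the vanishing of the blue coordinate of $\lambda\in\dagger\Lambda^{\leq d(\mu)}$ is forced by $d(\lambda)\leq(0,n_2)$ alone, not by $\dagger$ being a blue source; and your surjectivity step could be shortened by the paper's device \eqref{mu}, but as written it is complete.
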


\begin{proof} Let $\alpha\in\Lambda$ with $r(\alpha)=\dagger$. Then $\alpha\in\Lambda^\red$. Let $\nu$ be the unique red path of nonzero least length $n_2$ such that $s(\alpha\nu)=\dagger$.  For any $\beta$, applying (KP4$'$) to  $s_\alpha s_{\beta^*}$ gives 
\begin{equation}\label{mu}
s_\alpha s_{\beta^*}=s_\alpha p_{s(\alpha)}s_{\beta^*}=s_\alpha\sum_{\lambda\in s(\alpha)\Lambda^{\leq(0,n_2)}}s_\lambda s_{\lambda^*}s_{\beta^*}=s_{\alpha\nu}s_{(\beta\nu)^*}.
\end{equation}
The Kumjian-Pask relation (KP4$'$) at $\dagger$ with $n=|\mu|e_2$ says that $s_\mu s_{\mu^*}=p_{\dagger}$.
Writing $\mu^i$ for the path which traverses $\mu$ exactly $i$ times, we now have 
\begin{align*}
p_{\dagger}\KP_R(\Lambda)p_{\dagger}&=\lsp\{s_\alpha s_{\beta^*}: \alpha,\beta\in\Lambda^\red, s(\alpha)=s(\beta), r(\alpha)=r(\beta)=\dagger\}\\
&=\lsp\{s_{\mu^i} s_{(\mu^j)^*}:i,j\in\N\} \quad\text{(using \eqref{mu})}\\
&=\lsp\{s_{\mu^i}, s_{(\mu^i)^*}:i\in\N\}\quad\text{(since $s_\mu s_{\mu^*}=p_\dagger$)}.
\end{align*}

Now let $E$ be the directed graph with one vertex $w$ and one edge $f$. Let $L_R(E)$ be the Leavitt path algebra over $R$, and let $\{q_w, t_f, t_{f^*}\}$ be the generating Leavitt $E$-family in $L_R(E)$. The  polynomials  $\{1, x, x^{-1}\}$ are a Leavitt $E$-family in $R[x,x^{-1}]$, and the universal property of $L_R(E)$ gives an $R$-algebra homomorphism $\rho:L_R(E)\to R[x,x^{-1}]$ such that $\rho(1)=q_w$, $\rho(t_f)=x$ and $\rho(t_{f^*})=x^{-1}$. 
Now $\rho$ is surjective because the range of $\rho$ contains the generators $x$ and $x^{-1}$ of $R[x,x^{-1}]$, and it is one-to-one by definition of the zero polynomial.

Since $p_{\dagger}=s_\mu s_{\mu^*}$, it follows that $\{p_\dagger, s_\mu, s_{\mu^*}\}$ is a Leavitt $E$-family in $\KP_R(\Lambda)$. By the universal property of $L_R(E)$ again, there is an $R$-algebra homomorphism $\pi:L_R(E)\to \KP_R(E)$ such that $\pi(q_w)=p_\dagger$, $\pi(t_f)=s_\mu$ and $\pi(t_{f^*})=s_{\mu^*}$.  Now we observe that $p_{\dagger}\KP_R(\Lambda)p_{\dagger}=\lsp\{s_{\mu^i}, s_{(\mu^i)^*}:i\in\N\}$ is graded over $\Z$ by the subgroups $\lsp\{s_{\mu^i}\}$. Thus $\pi$ is graded, and hence it is injective by the graded-uniqueness theorem \cite[Theorem~5.3]{T}.  

Now $\pi\circ\rho^{-1}:R[x,x^{-1}]\to \KP_R(\Lambda)$ is injective and satisfies 
\[\pi\circ\rho^{-1}(1)=p_\dagger, \pi\circ\rho^{-1}(x)=s_\mu \text{ and } \pi\circ\rho^{-1}(x^{-1})=s_{\mu^*}.\] It follows that $\pi\circ\rho^{-1}$ has range $\lsp\{s_{\mu^i}, s_{(\mu^i)^*}:i\in\N\}=p_{\dagger}\KP_R(\Lambda)p_{\dagger}$. Thus $\pi\circ\rho^{-1}$ is the required isomorphism. 
\end{proof}

\begin{proof}[Proof of Proposition~\ref{prop3.5}]  Fix a vertex $\dagger\in S$ and 
let $e_\dagger$ be the edge in $S\Lambda^\red S$ with $r(e_\dagger)=\dagger$.  For $\alpha,\beta$ in $Y$, let $\nu(\alpha,\beta)$ be the unique path in $\Lambda^\red$ connecting $s(\alpha)$ and $s(\beta)$ such that $\nu(\alpha,\beta)$ does not contain $e_\dagger$. 
It follows from the proof of Lemma~3.8 of \cite{PRRS}, that the elements
 \begin{equation}\label{matrixunits}
 \theta(\alpha,\beta):=\begin{cases}s_\alpha s_{\nu(\alpha,\beta)}s_{\beta^*}  
 &\text{if $s(\nu(\alpha,\beta))=s(\beta)$};\\
 s_\alpha s_{\nu(\alpha,\beta)^*}s_{\beta^*}  &\text{if $s(\nu(\alpha,\beta))=s(\alpha)$}
 \end{cases}
 \end{equation}
form a set $\{\theta(\alpha,\beta):\alpha,\beta\in Y\}$ of matrix units in $\KP_R(\Lambda)$, that is, $\theta(\alpha,\beta)\theta(\sigma,\tau)=\delta_{\beta, \sigma}\theta(\alpha,\tau)$. Since $\Lambda^0$ is finite, there exists $n_1$ such that $Y=\Lambda^{\leq (n_1,0)}$. Calculate:
\begin{align*}\sum_{\alpha\in Y} \theta(\alpha,\alpha)&=\sum_{\alpha\in Y} s_\alpha s_{\alpha^*}=\sum_{v\in\Lambda^0}\sum_{\alpha\in vY}s_\alpha s_{\alpha^*}
=\sum_{v\in\Lambda^0}\sum_{\alpha\in v\Lambda^{\leq (n_1,0)}}s_\alpha s_{\alpha^*}=\sum_{v\in\Lambda^0}p_v=1.
\end{align*}

Let $\alpha_0=\dagger$. Then $\alpha_0$ is a path of degree $0$ in $Y$, and $\theta(\alpha_0,\alpha_0)=p_\dagger$.
Now apply Lemma~\ref{lemma-rowan} with $\alpha_0=\dagger$  to see that $\KP_R(\Lambda)$ is isomorphic to $M_Y(p_\dagger\KP_R(\Lambda)p_\dagger)$.  The proposition now follows from the isomorphism of $p_\dagger\KP_R(\Lambda)p_\dagger$ with $R[x,x^{-1}]$ of Lemma~\ref{lem-badapple}
\end{proof}

\subsection{Application to rank-$2$ Bratteli diagrams} \label{sec_cofinal}  Throughout this subsection, 
$\Lambda$ is a row-finite $2$-graph without sources which is a rank-$2$ Bratteli diagram in the sense of \cite[Definition~4.1]{PRRS}. 
This means that the blue subgraph $\Lambda^\blue$ of paths of degree $(n_1, 0)$ is a Bratteli diagram in the usual sense: the vertex set $\Lambda^0$ is the disjoint union $\bigsqcup_{n=0}^\infty V_n$ of finite subsets $V_n$, each blue edge goes from some $V_{n+1}$ to $V_n$. The red subgraph $\Lambda^\red$ of paths of degree $(0,n_2)$ consists of disjoint cycles whose vertices lie entirely in some $V_n$. For each blue edge $e$ there is a unique red edge $f$ with $s(f)=r(e)$, and hence by the factorization property there is a unique blue-red path $\Ff(e)h$ such that $\Ff(e)h=fe$. The map $\Ff:\Lambda^{e_1}\to \Lambda^{e_1}$ is a bijection, and induces a permutation of each finite set $\Lambda^{e_1}V_n$. We write $o(e)$ for the order of $e$: the smallest $l>0$ such that $\Ff^l(e)=e$. 

\label{page-cofinal} A $k$-graph without sources is \emph{cofinal} if for every $x\in\Lambda^{\infty}$ and every $v\in \Lambda^0$, there exists $n\in\N^k$ such that $v\Lambda x(n)\neq \emptyset$.
It follows from \cite[Theorem~5.1]{PRRS} that if $\Lambda$ is cofinal and $\{o(e)\}$ is unbounded, then $\Lambda$ is aperiodic; and hence if $K$ is a field, then $\KP_K(\Lambda)$ is simple  by \cite[Corollary~7.8]{ACaHR}.  So the cofinal rank-$2$ Bratteli diagrams give a rich supply of simple Kumjian-Pask algebras.

Theorem~7.10 of \cite{ACaHR} uses rank-$2$ Bratteli diagrams to show that there are simple Kumjian-Pask algebras that are neither purely infinite nor locally matricial.  But the  analysis in \S7 of \cite{ACaHR} specialises to $K=\C$ because  the proof reduces to  a ``rank-$2$ Bratteli diagrams $\Lambda_N$ of depth $N$'', which is a $2$-graph with sources. In the absence of a Kumjian-Pask algebra for such graphs, the embedding into $C^*(\Lambda_N)$ is used.  This was one of our motivations for defining a Kumjian-Pask algebra for graphs with sources in the first place, and we now show that we can extend  
\cite[Theorem~7.10]{ACaHR} from $\C$ to arbitrary fields $K$ (see Theorem~\ref{thm-point} below).  Note that
we will use the letter $K$ in place of $R$ when we are explicitly assuming the underlying ring is a field.  
We start with three lemmas.  

\begin{lemma}\label{lem-helper-cut'} Let $\Lambda$ be a rank-$2$ Bratteli diagram.
Let $\Lambda_N$ be the rank-$2$ Bratteli diagram of depth $N$ consisting of all the paths in $\Lambda$ which begin and end in $\bigcup_{n=0}^N V_n$. Then $\Lambda_N$ is locally convex and  the subalgebra $\lsp\{s_\mu s_{\nu^*}:\mu,\nu\in\Lambda_N\}$ of $\KP_R(\Lambda)$ is canonically isomorphic to $\KP_R(\Lambda_N)$.
\end{lemma}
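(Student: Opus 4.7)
The plan is to verify local convexity of $\Lambda_N$ and then construct the canonical isomorphism by restricting the universal Kumjian-Pask $\Lambda$-family to $\Lambda_N$ and applying the graded-uniqueness theorem. For local convexity, take $v\in \Lambda_N^0$, so $v\in V_n$ for some $n\leq N$. If $n=N$, then $v\Lambda_N^{e_1}=\emptyset$ because the blue edges into $v$ have source in $V_{N+1}\not\subseteq \Lambda_N^0$, so the defining condition is vacuous at $v$. If $n<N$, then for $\lambda\in v\Lambda_N^{e_1}$ and $\mu\in v\Lambda_N^{e_2}$ we have $s(\lambda)\in V_{n+1}\subseteq\bigcup_{k\leq N}V_k$ and $s(\mu)\in V_n$; since every vertex lies on a red cycle, $s(\lambda)\Lambda_N^{e_2}$ is nonempty, and since $\Lambda$ has no sources and $n+1\leq N$, there is a blue edge from some vertex of $V_{n+1}\subseteq\bigcup_{k\leq N}V_k$ into $s(\mu)$, so $s(\mu)\Lambda_N^{e_1}$ is nonempty.

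Next I would restrict the universal Kumjian-Pask $\Lambda$-family $(p,s)$ by setting $P_v:=p_v$ for $v\in\Lambda_N^0$ and $S_\lambda:=s_\lambda$, $S_{\lambda^*}:=s_{\lambda^*}$ for $\lambda\in\Lambda_N^{\neq 0}$, and verify that $(P,S)$ is a Kumjian-Pask $\Lambda_N$-family. Relations (KP1) and (KP2) are inherited immediately. The delicate part is (KP3$'$) and (KP4$'$), since the truncation changes the set of ``$\leq m$''-paths. The key structural observation is that for $v\in V_n$ with $n\leq N$ and any $m\in\N^2\setminus\{0\}$, every path in $v\Lambda_N^{\leq m}$ has the \emph{same} degree $m':=\min(m_1,N-n)e_1+m_2e_2$: the red direction always forces $d(\lambda)_2=m_2$ because every vertex has outgoing red edges, and the blue direction forces either $d(\lambda)_1=m_1$ or $s(\lambda)\in V_N$, with the latter equating $d(\lambda)_1$ with $N-n$. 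Hence $v\Lambda_N^{\leq m}$ equals $v\Lambda^{m'}$ as a subset of $\Lambda$, so (KP4$'$) at $v$ for $(P,S)$ is precisely (KP4$'$) at $v$ for $(p,s)$ at degree $m'$, and (KP3$'$) for $(P,S)$ reduces to the (KP3)-relation for $(p,s)$ because any two paths in $v\Lambda_N^{\leq m}$ sharing a range have equal degrees.

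Once $(P,S)$ has been verified, the universal property of $\KP_R(\Lambda_N)$ (Theorem~\ref{thm-KP}\eqref{item-a}) yields an $R$-algebra homomorphism $\pi\colon\KP_R(\Lambda_N)\to\KP_R(\Lambda)$, and Corollary~\ref{cor-span} identifies its image as $\lsp\{s_\mu s_{\nu^*}:\mu,\nu\in\Lambda_N\}$. Since $\pi\circ p=P$ and $\pi\circ s=S$, the homomorphism $\pi$ carries $\KP_R(\Lambda_N)_n$ into $\KP_R(\Lambda)_n$ for every $n\in\Z^2$, so $\pi$ is graded. Finally $\pi(rP_v)=rp_v\neq 0$ for every $r\in R\setminus\{0\}$ and $v\in\Lambda_N^0$ by Theorem~\ref{thm-KP}\eqref{item-a} applied to $\Lambda$, so the graded-uniqueness theorem (Theorem~\ref{gut}) gives injectivity, and $\pi$ is the required canonical isomorphism. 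The main obstacle is the collapse of $v\Lambda_N^{\leq m}$ to the single-degree set $v\Lambda^{m'}$, which is exactly the step where the rank-$2$ Bratteli diagram structure (red cycles at every vertex, blue edges only raising the level) enters in an essential way.
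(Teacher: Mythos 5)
Your proposal is correct and follows essentially the same route as the paper: restrict the universal $\Lambda$-family to $\Lambda_N$, verify the relations by exploiting that the red cycles and the level structure force every path in $v\Lambda_N^{\leq m}$ to have the same degree, and then combine the universal property of $\KP_R(\Lambda_N)$ with the graded-uniqueness theorem. The only cosmetic differences are that you check local convexity directly instead of citing \eqref{3.1}, and you handle (KP4$'$) via the identification $v\Lambda_N^{\leq m}=v\Lambda^{m'}$ rather than through Lemma~\ref{lem-kp4alt} (with the degenerate case $m'=0$ being trivial since $p_v$ is idempotent).
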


\begin{proof}  The graph  $\Lambda_N$ has sources, but satisfies \eqref{3.1}, and hence is locally convex \cite[page 141]{PRRS}.
For $v\in\Lambda_N^0$ and $\lambda\in\Lambda_N$, set $Q_v=p_v$, $T_\lambda=s_\lambda$ and $T_{\lambda^*}=s_{\lambda^*}$. We will show that $(Q,T)$ is a Kumjian-Pask $\Lambda_N$-family in $\KP_R(\Lambda)$. Both (KP1) and (KP2) for $(Q,T)$ immediately reduce to (KP1) and (KP2) for the $\Lambda$-family $(p,s)$.

For (KP3$'$), let $n\in\N^2\setminus\{0\}$ and $\lambda,\mu\in\Lambda_N^{\leq n}$. 
Since \[
T_{\lambda^*}T_\mu=T_{\lambda^*}Q_{r(\lambda)}Q_{r(\mu)}T_\mu=\delta_{r(\lambda), r(\mu)}T_{\lambda^*}T_\mu,\] if  $r(\lambda)\neq r(\mu)$, then $T_{\lambda^*}T_\mu=0=\delta_{\lambda,\mu}Q_{s(\mu)}$. So we may assume that $r(\lambda)=r(\mu)$. 

Notice that $d(\lambda)_2 = d(\mu)_2 = n_2$ because $\lambda, \mu \in \Lambda^{\leq n}$ and the red 
subgraph consists entirely of cycles and hence has no sources.  We claim that $d(\lambda)_1=d(\mu)_1$.  
By way of contradiction, 
suppose $d(\lambda)_1 < d(\mu)_1 \leq n_1$.  Since $r(\lambda)=r(\mu)$, we must have $s(\lambda)\in V_i$  and  $s(\mu)\in V_j$ where $i<j$.  
Thus $s(\lambda)\notin V_N$. But  $s(\lambda)$ 
must be  a source in the blue graph because $d(\lambda)<n$. This implies $s(\lambda)\in V_N$, a contradiction.
 Similarly, we cannot have $d(\mu)_1 < d(\lambda)_1$. Thus $d(\lambda)=d(\mu)$ as claimed. 

  Now  $\lambda,\mu\in r(\lambda)\Lambda^{d(\lambda)}$, and $T_{\lambda^*}T_\mu=s_{\lambda^*}s_\mu=\delta_{\lambda,\mu}p_{s(\mu)}=\delta_{\lambda,\mu}Q_{s(\mu)}$ by (KP3) for $(p,s)$.   Thus (KP3$'$) holds for $(Q,T)$.

To see  (KP4$'$) holds, we use Lemma~\ref{lem-kp4alt}. Let $v\in\Lambda^0_N$ and suppose that $v\Lambda_N^{e_i}\neq\emptyset$. If $v\in\Lambda_N^0\setminus V_N$, then $i=1$ or $2$, and  if $v\in V_N$, then $i=2$. In both cases,  (KP4) for $(q,t)$ gives \[Q_v=p_v=\sum_{\lambda\in v\Lambda^{e_i}}s_\lambda s_{\lambda^*}= \sum_{\lambda\in v\Lambda_N^{e_i}}s_\lambda s_{\lambda^*}.\]
Thus (KP4$'$) holds by Lemma~\ref{lem-kp4alt}.

Denote the generating Kumjian-Pask $\Lambda_N$-family in $\KP_R(\Lambda_N)$ by $(q, t)$.  The universal property of $\KP_R(\Lambda_N)$ (Theorem~\ref{thm-KP}(\ref{item-a})) now gives a homomorphism $\pi_{Q,T}:\KP_R(\Lambda_N)\to \KP_R(\Lambda)$ such that $\pi_{Q,T}(q_v)=p_v$, $\pi_{Q,T}(t_\lambda)=s_\lambda$ and $\pi_{Q,T}(t_{\lambda^*})=s_{\lambda^*}$.  Thus $\pi_{Q,T}$ has range $\lsp\{s_\mu s_{\nu^*}:\mu,\nu\in\Lambda_N\}$.
It is clear that $\pi_{Q,T}$ is graded. Since each $\pi_{Q,T}(q_v)\neq 0$, it follows from the graded-uniqueness theorem (Theorem~\ref{gut}) that $\pi_{Q,T}$ is injective.
\end{proof}

\begin{lemma}\label{break-up}  Let $\Lambda_N$ be a rank-$2$ Bratteli diagram of depth $N$.
Let $V_{N,i}$ be the vertices in $V_N$ of one isolated red cycle, and let $\Lambda_{N,i}$ be the subgraph of $\Lambda_N$ of paths with source and range in $V:=\{v\in \Lambda^{0}: v\Lambda V_{N,i}\neq \emptyset\}$. Let $(q,t)$ be a generating Kumjian-Pask $\Lambda_N$ family in $\KP_R(\Lambda_N)$. Then the subalgebra \[C_{n,i}:=\lsp\{t_\mu t_{\nu^*}:\mu,\nu\in\Lambda_N, s(\mu)=s(\nu)\in V_{N,i}\}\] of $\KP_R(\Lambda_N)$ is canonically isomorphic to $\KP_R(\Lambda_{N,i})$.
\end{lemma}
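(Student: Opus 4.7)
The strategy is to mimic the proof of Lemma~\ref{lem-helper-cut'}. First, I would show that $\Lambda_{N,i}$ is itself a locally convex, row-finite $2$-graph. The set $V$ is closed under source-extension: if $r(\mu)\in V$, composing a witness path from $V_{N,i}$ to $r(\mu)$ with $\mu$ produces a witness for $s(\mu)\in V$. This gives closure under factorization, while local convexity and row-finiteness are inherited from $\Lambda_N$. Two structural observations about $V$ will be needed later: (a) $V$ is a union of whole red cycles at each level (once one vertex of a red cycle reaches $V_{N,i}$, traversing the cycle shows all do), and (b) any blue edge of $\Lambda_N$ with range in $V$ has its source in $V$; this uses the specific structure of rank-$2$ Bratteli diagrams and the interaction between blue edges and the factorization map $\Ff$.

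Next, define a candidate Kumjian-Pask $\Lambda_{N,i}$-family $(Q,T)$ in $\KP_R(\Lambda_N)$ by $Q_v := t_v$ for $v\in V$, together with $T_\lambda := t_\lambda$ and $T_{\lambda^*} := t_{\lambda^*}$ for $\lambda \in \Lambda_{N,i}$. Relations (KP1) and (KP2) reduce immediately to the corresponding relations for $(q,t)$. The verification of (KP3$'$) parallels that of Lemma~\ref{lem-helper-cut'}: for $\lambda, \mu \in \Lambda_{N,i}^{\leq n}$ with the same range, one argues that $d(\lambda) = d(\mu)$ using that red cycles have no sources (forcing equal red coordinates) and the depth structure of the Bratteli diagram (forcing equal blue coordinates), after which (KP3) for $(q,t)$ yields the result. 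For (KP4$'$), apply Lemma~\ref{lem-kp4alt}: thanks to observations (a) and (b), for $v\in V$ with $v\Lambda_{N,i}^{e_j}\neq\emptyset$ we have $v\Lambda_{N,i}^{e_j} = v\Lambda_N^{e_j}$, so the relation for $(q,t)$ transfers verbatim.

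Theorem~\ref{thm-KP}(a) then provides a unique $R$-algebra homomorphism $\pi_{Q,T}\colon \KP_R(\Lambda_{N,i}) \to \KP_R(\Lambda_N)$ sending the generators of the universal family $(q',t')$ in $\KP_R(\Lambda_{N,i})$ to the corresponding elements of $(Q,T)$. Its image is $\lsp\{t_\mu t_{\nu^*}: \mu,\nu\in\Lambda_{N,i}\}$, and I would identify this with $C_{n,i}$ as follows: one inclusion is immediate since paths with source in $V_{N,i}\subseteq V$ lie in $\Lambda_{N,i}$, and the reverse inclusion follows by using (KP4$'$) at $s(\mu)=s(\nu)=v\in V$ to expand $t_\mu t_{\nu^*}$ into a sum of $t_{\mu\alpha}t_{(\nu\alpha)^*}$ over paths $\alpha \in v\Lambda_{N,i}^{\leq n}$ reaching $V_{N,i}$ for suitable $n$. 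Since $\pi_{Q,T}$ is graded and $\pi_{Q,T}(rq'_v) = r t_v \neq 0$ for every $r \in R\setminus\{0\}$ (by Theorem~\ref{thm-KP}(a) applied to $\Lambda_N$), the graded-uniqueness theorem (Theorem~\ref{gut}) gives injectivity. The main obstacle I expect is verifying observation (b)---that $V$ is closed under sources of blue edges---since without this the naive choice $Q_v = t_v$ fails (KP4$'$) and one would be forced to define $Q_v$ as a proper sub-idempotent of $t_v$.
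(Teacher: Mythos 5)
Your observation (b) is false, and it is exactly where the argument breaks down. Nothing in the definition of a rank-$2$ Bratteli diagram prevents a vertex $v\in V_n$ with $n<N$ from receiving blue edges from vertices lying over \emph{different} red cycles at level $N$: say $v$ receives a blue path ending at $V_{N,i}$ and also a blue edge $f$ with $s(f)$ on another cycle $V_{N,j}$, $j\neq i$. Then $v\in V$, but $s(f)\notin V$, because the only paths in $\Lambda_N$ with range at a level-$N$ vertex are red paths around that vertex's own cycle; so $V$ is not closed under sources of blue edges and $v\Lambda_{N,i}^{e_1}\subsetneq v\Lambda_N^{e_1}$. Consequently the naive family $Q_v=t_v$, $T_\lambda=t_\lambda$ genuinely fails (KP4$'$): $q_v=\sum_{\lambda\in v\Lambda_N^{e_1}}t_\lambda t_{\lambda^*}$ strictly dominates $\sum_{\lambda\in v\Lambda_{N,i}^{e_1}}t_\lambda t_{\lambda^*}$, so no homomorphism out of $\KP_R(\Lambda_{N,i})$ is obtained from the universal property. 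The same phenomenon ruins your identification of the image: $q_v=t_vt_{v^*}$ lies in $\lsp\{t_\mu t_{\nu^*}:\mu,\nu\in\Lambda_{N,i}\}$ but in general not in $C_{N,i}$, since expanding it by (KP4$'$) in $\Lambda_N$ produces terms with sources in the other components $V_{N,j}$. (If (b) were true, the various $C_{N,j}$ would come from pairwise disjoint subgraphs and the direct-sum bookkeeping in Proposition~\ref{prop-notpi} would be pointless.) A smaller slip: your ``closed under source-extension'' claim for $V$ is stated backwards; what is true, and what gives closure under factorisation, is that $s(\mu)\in V$ implies $r(\mu)\in V$.

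The fallback you mention in your last sentence is not a side remark: it is the actual proof. The paper sets $P_i:=\sum_{\alpha\in\Lambda^{\blue}V_{N,i}}t_\alpha t_{\alpha^*}$ and defines the compressed family $Q_v:=P_iq_vP_i$, $T_\mu:=P_it_\mu P_i$, $T_{\mu^*}:=P_it_{\mu^*}P_i$. Using that all blue paths from a fixed vertex into $V_{N,i}$ have the same degree, one checks $P_i^2=P_i$, $P_iq_v=q_vP_i$, $P_it_\mu P_i=t_\mu P_i$ and $P_it_{\mu^*}P_i=P_it_{\mu^*}$; these relations reduce (KP1), (KP2), (KP3$'$) and (KP4$'$) for $(Q,T)$ to the corresponding relations for $(q,t)$ -- in particular (KP4$'$) now holds because compressing by $P_i$ removes precisely the summands of $q_v$ indexed by paths into the other components. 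Only then does Theorem~\ref{thm-KP}(a) give $\pi_{Q,T}:\KP_R(\Lambda_{N,i})\to C_{N,i}$, which is graded because $P_i$ is homogeneous of degree $0$, injective by Theorem~\ref{gut}, and surjective onto $C_{N,i}$. As written, your outline omits this compression, so it does not prove the lemma.
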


\begin{proof} Set $P_\cycle=\sum_{\alpha\in\Lambda^{\blue}V_{N,i}} t_\alpha t_{\alpha^*}$.   
Let $v\in \Lambda_{N,i}^0$ and $\mu \in  \Lambda_{N,i}$. 
Set \[Q_v=P_iq_v P_i, T_\mu=P_i t_\mu P_i \text {and } T_{\mu^*}=P_i t_{\mu^*} P_i.\] 
We will show  that $(Q,T)$ is a Kumjian-Pask $\Lambda_{N,i}$-family in $C_{N,i}$. 

If $\alpha,\beta\in v\Lambda^\blue V_{N,i}$, then $d(\alpha)=d(\beta)$ because $\Lambda^\blue$ is a Bratteli diagram.
Thus
\begin{align*}
P_i^2&=\sum_{v,w \in V}\sum_{\alpha\in v\Lambda^\blue V_{N,i}}\sum_{\beta\in w\Lambda^\blue V_{N,i}}t_\alpha t_{\alpha^*}q_v q_w t_\beta t_{\beta^*}
=\sum_{v \in V}\sum_{\alpha,\beta\in v\Lambda^\blue V_{N,i}}t_\alpha t_{\alpha^*}t_\beta t_{\beta^*}\\
&=\sum_{v \in V}\sum_{\alpha\in v\Lambda^\blue V_{N,i}}t_\alpha t_{\alpha^*}t_\alpha t_{\alpha^*}=P_i.
\end{align*}
We also have $q_vP_i=\sum_{\alpha\in v\Lambda^\blue V_{N,i}}t_\alpha t_\alpha^*=P_i q_v$, and 
\begin{align*}
P_it_\mu P_i&=\sum_{\alpha,\beta\in\Lambda^{\blue}V_{N,i}} t_\alpha t_{\alpha^*}t_\mu t_\beta t_{\beta^*}
=\sum_{\alpha\in r(\mu)\Lambda^{\blue}V_{N,i}}\sum_{\beta\in s(\mu)\Lambda^{\blue}V_{N,i}} t_\alpha t_{\alpha^*}t_{\mu \beta} t_{\beta^*}\\
&=\sum_{\beta\in s(\mu)\Lambda^{\blue}V_{N,i}} t_{\mu \beta} t_{\beta^*}
\quad\text{(by KP3$'$ because such $\alpha$ must have $d(\alpha)=d(\mu\beta)$)}\\
&=t_\mu P_i.
\end{align*} 
A similar calculation gives $P_it_{\mu^*} P_i=P_it_{\mu^*}$. The above relations help to reduce the Kumjian-Pask relations for $(Q,T)$ to the Kumjian-Pask relations for $(q,t)$ in $\KP_R(\Lambda_N)$.

To see  (KP1) holds, let $v,w\in \Lambda_{N,i}^0$. Then 
\[
Q_vQ_w=P_iq_vP_i^2q_wP_i=P_i^3q_vq_wP_i=\delta_{v,w}P_iq_vP_i=\delta_{v,w}Q_v.
\]
To see  (KP2) holds, let $\lambda,\mu\in\Lambda_{N,i}$.  Then
\[
T_\lambda T_\mu=P_it_\lambda P_i^2 t_\mu P_i=P_it_\lambda t_\mu P_i=P_i t_{\lambda\mu}P_i=T_{\lambda\mu},
\]
and, similarly, $T_{\mu^*} T_{\lambda^*}=T_{(\lambda\mu)^*}$.
To see (KP3$'$) holds, let $n\in\N^2\setminus\{0\}$ and $\lambda,\mu\in\Lambda_{N,i}^{\leq n}$. Then
\begin{align*}
T_{\lambda^*}T_\mu&=P_i t_{\lambda^*}P_it_\mu P_i=P_i t_{\lambda^*}t_\mu P_i\\
&=\delta_{\lambda,\mu}P_i q_{s(\mu)} P_i \quad\text{(because $\lambda,\mu\in\Lambda_N^{\leq n}$)}\\
&=\delta_{\lambda,\mu}Q_{s(\mu)}.
\end{align*}
To see (KP4$'$) holds, let $n\in\N^2\setminus\{0\}$ and $v\in\Lambda_{N,i}^0$. Then
\begin{align*}
\sum_{\lambda\in v\Lambda_{N,i}^{\leq n}} T_\lambda T_{\lambda^*}
&=\sum_{\lambda\in v\Lambda_{N,i}^{\leq n}} P_it_\lambda P_i^2 t_{\lambda^*}P_i=\sum_{\lambda\in v\Lambda_{N,i}^{\leq n}} P_it_\lambda  t_{\lambda^*}P_i\\
&=P_i\Big( \sum_{\lambda\in v\Lambda_{N,i}^{\leq n}} t_\lambda  t_{\lambda^*} \Big)P_i
=P_i\Big( \sum_{\beta\in v\Lambda_{N,i}^{\leq (n_1+N, n_2)}} t_\beta\ t_{\beta^*} \Big)P_i\\
&=P_iq_vP_i=Q_v.
\end{align*}
Thus $(Q,T)$ is a Kumjian-Pask $\Lambda_{N,i}$-family in $C_{N,i}$. Write $(q^i, t^i)$ for the universal Kumjian-Pask family in $\KP_R(\Lambda_{N,i})$. The  universal property of $\KP_R(\Lambda_{N,i})$ gives a homomorphism $\pi_{Q,T}: \KP_R(\Lambda_{N,i})\to C_{N,i}$ such that $\pi_{Q,T}(q_v^i)=Q_v$, $\pi_{Q,T}(t_\lambda^i)=T_\lambda$ and $\pi_{Q,T}(t_{\lambda^*}^i)=T_{\lambda^*}$. Thus $\pi_{Q,T}(t_\mu^i t_{\nu^*}^i)=P_it_\mu P_i t_{\nu^*}P_i=P_it_\mu t_{\nu^*}P_i$. Since $P_i$ is homogeneous of degree $0$ it follows that $\pi_{Q,T}$ is graded.  Thus $\pi_{Q,T}$ is injective by the graded-uniqueness (Theorem~\ref{gut}). Finally, the range of $\pi_{Q,T}$ is 
\begin{align*}\lsp\{P_i t_\mu P_i t_{\nu^*}P_i:\lambda,\mu\in \Lambda_N\}&=\lsp\{P_i t_\mu P_i t_{\nu^*}P_i: s(\mu)=s(\nu)\in V_{N,i}, r(\mu), r(\nu)\in V\}, \end{align*}
which is equal to $C_{N,i}$.
\end{proof}

\begin{lemma}\label{lem-restrict-iso}
 Let $\Lambda_N$ be a rank-$2$ Bratteli diagram of depth $N$.  Suppose that the set of sources in $\Lambda^\blue$ are the vertices on a single isolated cycle in $\Lambda^\red$.  Let $P$ be the idempotent $P=\sum_{v\in V_0}p_v$ and $X=V_0\Lambda^\blue V_N$. Then $P\KP_R(\Lambda_N)P$ is isomorphic to $M_X(R[x,x^{-1}])$.
\end{lemma}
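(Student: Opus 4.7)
The plan is to apply Proposition~\ref{prop3.5} to the finite $2$-graph $\Lambda_N$ and then identify $P\KP_R(\Lambda_N)P$ as a corner of the resulting matrix algebra. First I would verify the hypotheses of Proposition~\ref{prop3.5} for $\Lambda_N$: its blue subgraph has no cycles (being a Bratteli diagram), every vertex is the range of an isolated red cycle, and the blue sources in $\Lambda_N$ are precisely $V_N$ (any blue edge ranging at a vertex of $V_N$ would originate in $V_{N+1}$, which is excluded by truncation), which by hypothesis lie on a single isolated red cycle. Proposition~\ref{prop3.5} therefore supplies an isomorphism
\[
\phi:\KP_R(\Lambda_N)\xrightarrow{\;\sim\;} M_Y(R[x,x^{-1}]),\qquad Y:=\Lambda_N^\blue V_N,
\]
under which each matrix unit $\theta(\alpha,\alpha)=s_\alpha s_{\alpha^*}$ from \eqref{matrixunits} maps to the elementary matrix $E_{\alpha,\alpha}$.

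Next I would compute $\phi(P)$. Observe that $X\subseteq Y$, and the crucial set-theoretic identity is $v\Lambda_N^{\leq(N,0)}=vX$ for each $v\in V_0$: any blue path $\alpha$ with $r(\alpha)=v$ and $d(\alpha)_1\leq N$ is either of length exactly $N$---in which case it ends at $V_N$ by the Bratteli structure---or has $s(\alpha)$ a blue source, which again forces $s(\alpha)\in V_N$. Applying (KP4$'$) with $n=(N,0)$ at each $v\in V_0$ then gives
\[
p_v\;=\;\sum_{\alpha\in vX}s_\alpha s_{\alpha^*}\;=\;\sum_{\alpha\in vX}\theta(\alpha,\alpha),
\]
and summing over $v\in V_0$ yields $P=\sum_{\alpha\in X}\theta(\alpha,\alpha)$. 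Hence $\phi(P)$ is the diagonal idempotent $E_X:=\sum_{\alpha\in X}E_{\alpha,\alpha}$ in $M_Y(R[x,x^{-1}])$.

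Finally, I would invoke the elementary observation that cutting $M_Y(R[x,x^{-1}])$ by the diagonal idempotent $E_X$ yields the subalgebra of matrices supported on rows and columns indexed by $X$, which is canonically isomorphic to $M_X(R[x,x^{-1}])$ via $a\mapsto(a_{\alpha,\beta})_{\alpha,\beta\in X}$. Restricting $\phi$ to the corner $P\KP_R(\Lambda_N)P$ and composing with this identification produces the desired isomorphism. No substantial obstacle arises beyond what was already handled in Proposition~\ref{prop3.5}; the only care needed is in pinning down the identity $v\Lambda_N^{\leq(N,0)}=vX$ from the Bratteli level structure, so that $P$ is recognisable as a sum of the diagonal matrix units produced by \eqref{matrixunits}.
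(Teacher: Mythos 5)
Your proof is correct and takes essentially the same route as the paper: both verify the hypotheses of Proposition~\ref{prop3.5} for $\Lambda_N$, use the resulting matrix-unit isomorphism onto $M_Y(R[x,x^{-1}])$, and identify $P\KP_R(\Lambda_N)P$ with the $X\times X$ corner. The only cosmetic difference is bookkeeping: you recognise $P=\sum_{\alpha\in X}\theta(\alpha,\alpha)$ via (KP4$'$) with $n=(N,0)$ and the identity $v\Lambda_N^{\leq (N,0)}=vX$, whereas the paper checks directly that $\theta(\dagger,\alpha)PwP\theta(\beta,\dagger)$ equals $\theta(\dagger,\alpha)w\theta(\beta,\dagger)$ when $r(\alpha),r(\beta)\in V_0$ and vanishes otherwise --- two ways of expressing the same fact that $P$ corresponds to the diagonal idempotent supported on $X$.
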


\begin{proof}  Let $Y=\Lambda^\blue V_N$ and fix $\dagger\in V_N$. Let $\theta(\alpha, \beta)$ be the matrix units defined at \eqref{matrixunits}. The graph $\Lambda_N$ satisfies the hypotheses of Proposition~\ref{prop3.5}, and hence \[w\mapsto \big( \theta(\dagger, \alpha)w\theta(\beta,\dagger )\big)_{\alpha,\beta\in Y}\] is an isomorphism $\psi$ of $\KP_R(\Lambda_N)$ onto $M_Y(p_\dagger\KP_R(\Lambda_N)p_\dagger)$, and $p_\dagger\KP_R(\Lambda_N)p_\dagger$ is isomorphic to $R[x,x^{-1}]$ (see Lemmas~\ref{lemma-rowan} and \ref{lem-badapple} for how the isomorphism of Proposition~\ref{prop3.5} decomposes).  It suffices to prove  that the restriction of $\psi$ to $P\KP_R(\Lambda_N)P$ has range $M_X(p_\dagger\KP_R(\Lambda_N)p_\dagger)$.
We observe that \[\theta(\dagger,\alpha)PwP\theta(\beta,\dagger)=\begin{cases} \theta(\dagger,\alpha)w\theta(\beta,\dagger)&\text{if $r(\alpha), r(\beta)\in V_0$;}\\0&\text{else}. \end{cases}\] 
It follows first, that $\theta(\dagger,\alpha)PwP\theta(\beta,\dagger)\neq 0$ implies $\alpha,\beta\in X$, and second, if $\alpha,\beta\in X$, then \[\theta(\dagger,\alpha)P\KP_R(\Lambda_N)P\theta(\beta,\dagger)=\theta(\dagger,\alpha)\KP_R(\Lambda_N)\theta(\beta,\dagger)=p_\dagger\KP_R(\Lambda_N)p_\dagger.\]
Thus $\psi|$ is onto $M_X(p_\dagger\KP_R(\Lambda_N)p_\dagger)$.
\end{proof} 

An idempotent $p$ in $R$ is \emph{infinite} if there exist orthogonal nonzero idempotents $p_1, p_2 \in R$
and elements $x,y \in R$ such that
\[p=p_1+p_2,  \ x \in pRp_1,\   y \in p_1Rp,\ p=xy \text{ and } \ p_1=yx.\]  A simple ring is
\emph{purely infinite} if every nonzero right ideal of $R$ contains an infinite idempotent \cite[\S1]{AGP}.

\begin{prop}\label{prop-notpi}
Let $\Lambda$ be a rank-$2$ Bratteli diagram, and  $K$ a field. If $\Lambda$ is cofinal and aperiodic, then $\KP_K(\Lambda)$ is not purely infinite. 
\end{prop}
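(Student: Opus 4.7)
The strategy is to argue by contradiction. Since $\Lambda$ is cofinal and aperiodic and $K$ is a field, $\KP_K(\Lambda)$ is simple by the remarks at the start of this subsection. If $\KP_K(\Lambda)$ were also purely infinite, then for every nonzero idempotent $p$ the corner $p\KP_K(\Lambda)p$ would itself be purely infinite simple (a standard fact about purely infinite simple rings; see \cite{AGP}), and in particular would contain an infinite idempotent, so it would not be Dedekind-finite. It therefore suffices to produce one nonzero idempotent $p$ such that $p\KP_K(\Lambda)p$ is Dedekind-finite.

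I will take $p=p_v$ for any fixed $v\in V_0$ and write $p_v\KP_K(\Lambda)p_v = \bigcup_N p_vB_Np_v$, where $B_N:=\lsp\{s_\mu s_{\nu^*}:\mu,\nu\in\Lambda_N\}\cong\KP_K(\Lambda_N)$ by Lemma~\ref{lem-helper-cut'}. These inclusions are unital with common identity $p_v$, so an equation $xy=p_v$ in the union already holds in some $p_vB_Np_v$; hence it is enough to show each $p_vB_Np_v$ is Dedekind-finite.

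To analyse $p_vB_Np_v$, decompose $V_N$ into its disjoint isolated red cycles $V_{N,1},\ldots,V_{N,r}$ and set $P_i:=\sum_{\alpha\in\Lambda_N^\blue V_{N,i}}t_\alpha t_{\alpha^*}$. Applying (KP4$'$) at each vertex of $\Lambda_N$ at an appropriate blue degree yields $\sum_i P_i = 1_{\KP_K(\Lambda_N)}$, and Proposition~\ref{prop:lspfamily} combined with the fact that $V_N$ has no outgoing blue edges in $\Lambda_N$ shows that the $P_i$ are mutually orthogonal and in fact central. Thus $\KP_K(\Lambda_N) = \bigoplus_i P_i\KP_K(\Lambda_N)P_i$, and Lemma~\ref{break-up} identifies each summand with $\KP_K(\Lambda_{N,i})$. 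Since $p_v$ commutes with each $P_i$, we obtain $p_vB_Np_v = \bigoplus_i(p_vP_i)\KP_K(\Lambda_{N,i})(p_vP_i)$. When $v\Lambda V_{N,i}\ne\emptyset$, $p_vP_i$ lies below the sum of top-level vertex idempotents used in Lemma~\ref{lem-restrict-iso}, so the corresponding summand is a corner of the matrix algebra $M_{X_i}(K[x,x^{-1}])$; otherwise that summand is zero. Matrix rings over the commutative integral domain $K[x,x^{-1}]$ are Dedekind-finite (via the determinant), and corners and finite direct sums of Dedekind-finite unital rings are again Dedekind-finite. Hence $p_vB_Np_v$ is Dedekind-finite.

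The main obstacle will be the ring direct-sum decomposition $\KP_K(\Lambda_N)=\bigoplus_i P_i\KP_K(\Lambda_N)P_i$: the identity $\sum_i P_i = 1$ is a routine application of (KP4$'$), but centrality of each $P_i$ requires a careful calculation using Proposition~\ref{prop:lspfamily} together with the unique rank-$2$ factorisation property (tracking how a blue-then-red product in $\Lambda_N$ can be re-expressed as red-then-blue and thereby pushed past $P_i$). A secondary technical point is the appeal to the standard result that corners of purely infinite simple rings are again purely infinite simple, cited here to \cite{AGP}.
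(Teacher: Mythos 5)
Your proposal is correct, and it follows the same structural reduction as the paper: pass to a corner at level $0$, realise elements in the finite-depth subalgebras via Lemma~\ref{lem-helper-cut'}, split over the isolated red cycles at level $N$ (your centrality of the $P_i$ is equivalent to the paper's claim that $t_{\mu^*}t_\alpha=0$ when $s(\mu)$ and $s(\alpha)$ lie in different $V_{N,i}$, proved exactly with Proposition~\ref{prop:lspfamily} and the isolated-cycle structure, as in Lemma~\ref{break-up}), and then land in $M_X(K[x,x^{-1}])$ via Proposition~\ref{prop3.5} and Lemma~\ref{lem-restrict-iso}. Where you genuinely diverge is the endgame. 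The paper fixes the corner $P_0=\sum_{v\in V_0}p_v$, takes an explicit infinite idempotent with its five witnessing elements, chooses $N$ large enough to push all five into $Q_0\KP_K(\Lambda_N)Q_0$, transports them to $M_X(K[x,x^{-1}])$, and derives a contradiction by evaluating at points $z$ of $K$ and comparing ranks. You instead abstract the obstruction as Dedekind-finiteness: $M_X(K[x,x^{-1}])$ is Dedekind-finite by the determinant argument, Dedekind-finiteness passes to corners (your corner trick $a\mapsto a+(1-e)$), to finite direct sums, and to increasing unions of unital subrings with common unit $p_v$, so $p_v\KP_K(\Lambda)p_v$ is Dedekind-finite; a purely infinite simple corner cannot be. This packaging buys two things: you never have to track the five witnesses or choose $N$ after the fact, and you avoid the evaluation-plus-rank step (which over a finite field needs a small extra remark, since a nonzero Laurent matrix could in principle vanish at every point of $K^\times$ — one must use that it is an idempotent). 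Two small points of care: the inference ``contains an infinite idempotent, hence not Dedekind-finite'' is not valid for arbitrary rings; for your argument you should invoke that in a \emph{purely infinite simple unital} ring the identity itself is an infinite idempotent (this is in \cite[\S1]{AGP}), which is what actually contradicts Dedekind-finiteness. Also, the corner-passage fact the paper uses is \cite[Proposition~10]{AA2}; citing that alongside \cite{AGP} would match the paper's references.
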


\begin{proof} Since $\Lambda$ is cofinal and aperiodic, and $K$ is a field, $\KP_K(\Lambda)$ is simple by \cite[Theorem~6.1]{ACaHR}.
Let $P_0:=\sum_{v\in V_0}p_v$.  Since the property of being purely infinite and simple 
passes to corners by \cite[Proposition~10]{AA2}, it suffices  to prove that $P_0\KP_K(\Lambda)P_0$ is not purely infinite. We argue by contradiction: suppose that $P_0\KP_K(\Lambda)P_0$ is purely infinite.
Then $P_0\KP_K(\Lambda)P_0$ contains an infinite idempotent $p$. Then there exist nonzero idempotents  $p_1$, $p_2$ and elements $x$, $y$ in $P_0\KP_K(\Lambda)P_0$ such that
\begin{equation}\label{infinrels}
p=p_1+p_2,\quad p_1p_2=p_2p_1=0,\quad xy=p\quad \text{and}\quad yx=p_1.
\end{equation}
Choose $N\in \N$ large enough to ensure that all five elements can be written as linear combinations of elements $s_\lambda s_{\mu^*}$ for which $s(\lambda)$ and $s(\mu)$ are in $\bigcup_{n=0}^N V_n$. 

Let $\Lambda_N$ be the rank-$2$ Bratteli diagram of depth $N$. Let $(q,t)$ be the generating Kumjian-Pask $\Lambda_N$-family in $\KP_K(\Lambda_N)$ and set $Q_0=\sum_{v\in V_0}q_v$.  Since $\KP_K(\Lambda_N)$ is canonically isomorphic to the subalgebra $\lsp\{s_\mu s_{\nu^*}:\mu,\nu\in\Lambda_N\}$ of $\KP_K(\Lambda)$ by Lemma~\ref{lem-helper-cut'}, we may assume that $p$, $p_1$, $p_2$, $x$ and $y$ are all in $Q_0\KP_K(\Lambda_N)Q_0$.

Next we will decompose $\KP_K(\Lambda_N)$ into a direct sum. Using (KP4$'$), \[\KP_K(\Lambda_N)=\lsp\{t_\lambda t_{\mu^*}:s(\lambda)=s(\mu)\in V_N\}.\] 
Partition $V_N$ into subsets $V_{N,i}$, each consisting of the vertices of one isolated red cycle. We claim that if   $s(\mu)$ and $s(\alpha)$ are in different $V_{N,i}$, then $t_{\mu^*}t_\alpha=0$. By way of contradiction, suppose not. By Proposition~\ref{prop:lspfamily}, there exists $(\sigma,\tau)\in\Lambda_N\times\Lambda_N$ such that $\mu\sigma=\alpha\tau$ and $d(\mu\sigma)=d(\mu)\wedge d(\alpha)$.  Since $s(\mu)=s(\alpha)\in V_N$, both $\sigma$ and $\tau$ are red paths.  But $s(\sigma)=s(\tau)$ implies $\sigma$ and $\tau$ lie on the same isolated red cycle, and this implies that $s(\mu)=r(\sigma)$ and $s(\alpha)=r(\tau)$ are in the same $V_{N,i}$, a contradiction.   Thus $t_{\mu^*}t_\alpha=0$, as claimed.  It follows that $\KP_K(\Lambda_N)$ is the direct sum of the subalgebras
\[
C_{N,i}:=\lsp\{t_\lambda t_{\mu^*}:s(\lambda)=s(\mu)\in V_{N,i}\},
\]
and we have
\[Q_0\KP_K(\Lambda_N)Q_0=\bigoplus_i Q_0C_{N,i}Q_0.\] 
The elements $p$, $p_1$, $p_2$, $x$ and $y$ in $Q_0\KP_K(\Lambda_N)Q_0$ all have direct sum decompositions, and the elements satisfy the relations \eqref{infinrels}. The component of $p_2$ is nonzero in at least one summand, and then the same component of the rest must be nonzero too. So we may assume that $p$, $p_1$, $p_2$, $x$ and $y$  are all in $Q_0C_{N,i}Q_0$ for some $i$.

Now consider the subgraph $\Lambda_{N,i}$ of $\Lambda_N$ of paths with source and range in $\{v\in \Lambda^{0}: v\Lambda V_{N,\cycle}\neq \emptyset\}$. By Lemma~\ref{break-up}, $C_{N,i}$ is canonically isomorphic to $\KP_K(\Lambda_{N,i})$, and by Proposition~\ref{prop3.5}, $\KP_K(\Lambda_{N,i})$ is isomorphic to a matrix algebra $M_Y(K[x,x^{-1}])$ for a certain set $Y$, and by Lemma~\ref{lem-restrict-iso} this isomorphism restricts to an isomorphism of  $Q_0C_{N,i}Q_0$ onto $M_X(K[x,x^{-1}])$ where $X\subseteq Y$.
 Pulling the five elements through all these isomorphisms gives us nonzero idempotents $q$, $q_1$, $q_2$ and elements $f$, $g$ in $M_X(K[x,x^{-1}])$ such that
\[
q=q_1+q_2,\quad q_1q_2=q_2q_1=0,\quad fg=q\quad \text{and}\quad gf=q_1.
\]

Evaluation at $z\in K$ is a homomorphism, and so  $f(z)g(z)=q(z)$ and $g(z)f(z)=q_1(z)$.  Thus $g(z)$ is an isomorphism of $q(z)K^X$ onto $q_1(z)K^X$. So the matrices $q(z)$ and $q_1(z)$ have the same rank. On the other hand, since $q_1(z)$ and $q_2(z)$ are orthogonal, $\rank(q_1(z)+q_2(z))=\rank q_1(z)+\rank q_2(z)$. Now  $q=q_1+q_2$ implies that $\rank q_2(z)=0$ for all $z$. This contradicts that $q_2$ is nonzero. Thus there is no infinite idempotent  in $P_0\KP_K(\Lambda)P_0$, as claimed. Thus $P_0\KP_K(\Lambda)P_0$ is not purely infinite, and neither is $\KP_K(\Lambda)$. 
\end{proof}

Recall that a matricial algebra is a finite direct product of full matrix algebras and
we say an algebra is locally matricial algebra if it is direct limit of matricial algebras.

\begin{thm}\label{thm-point} Let $\Lambda$ be a rank-$2$ Bratteli diagram which is cofinal and aperiodic, and $K$ a field. Then $\KP_K(\Lambda)$ is simple, but is neither purely infinite nor locally matricial.
\end{thm}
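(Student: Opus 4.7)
The plan is to dispatch the three claims (that $\KP_K(\Lambda)$ is simple, is not purely infinite, and is not locally matricial) in turn. The first follows immediately from cofinality, aperiodicity and $K$ being a field via the simplicity result recalled in the discussion preceding the theorem (cf.\ \cite[Corollary~7.8]{ACaHR}), and the second is exactly Proposition~\ref{prop-notpi}. So the content lies in showing that $\KP_K(\Lambda)$ is not locally matricial, which I would establish by exhibiting an element that is transcendental over $K$.

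To produce the transcendental element, I would fix $v\in V_0$ and let $\mu$ be the unique isolated red cycle through $v$ of minimal positive degree, so $r(\mu)=s(\mu)=v$. The key preliminary claim is that $\{p_v,s_\mu,s_\mu^2,\ldots\}$ is linearly independent over $K$ in $p_v\KP_K(\Lambda)p_v$. To see this, take $N=0$ in Lemma~\ref{break-up} and let $V_{0,i}$ be the vertex set of the red cycle through $v$; the resulting subgraph $\Lambda_{0,i}$ is just this single isolated red cycle and so trivially satisfies the hypotheses of Proposition~\ref{prop3.5}. Lemma~\ref{lem-badapple} then supplies an isomorphism $K[x,x^{-1}]\cong p_v\KP_K(\Lambda_{0,i})p_v$ sending $x$ to the cycle generator. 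Composing with the embeddings $\KP_K(\Lambda_{0,i})\hookrightarrow\KP_K(\Lambda_0)$ from Lemma~\ref{break-up} and $\KP_K(\Lambda_0)\hookrightarrow\KP_K(\Lambda)$ from Lemma~\ref{lem-helper-cut'} yields an injection of $K[x,x^{-1}]$ into $p_v\KP_K(\Lambda)p_v$ sending $x\mapsto s_\mu$, whence the asserted linear independence.

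The remaining contradiction argument is short. Suppose $\KP_K(\Lambda)=\varinjlim A_n$ were locally matricial. Since any corner of a finite direct sum of full matrix algebras is again matricial, $p_v\KP_K(\Lambda)p_v=\varinjlim p_vA_np_v$ would likewise be locally matricial. Hence $s_\mu$ would lie in some finite-dimensional matricial subalgebra $B\subseteq p_v\KP_K(\Lambda)p_v$ with unit $1_B$, so that $1_Bs_\mu=s_\mu=s_\mu 1_B$. Because $\mu$ is an isolated red cycle we have $s_\mu s_{\mu^*}=p_v$ by (KP4$'$) and $s_{\mu^*}s_\mu=p_v$ by (KP3$'$), so $s_\mu$ is invertible in $p_v\KP_K(\Lambda)p_v$ with inverse $s_{\mu^*}$. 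Right-multiplying $1_Bs_\mu=s_\mu$ by $s_{\mu^*}$ gives $1_B p_v=p_v$, and the symmetric calculation gives $p_v 1_B=p_v$; combined with $1_B=p_v 1_B p_v$, these force $1_B=p_v$. Finally, finite-dimensionality of $B$ forces a nontrivial $K$-linear relation among $p_v=s_\mu^0,s_\mu,\ldots,s_\mu^{\dim_K B}$, contradicting the linear independence established above.

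The main obstacle is the bookkeeping required to track the generator $s_\mu$ faithfully through the chain of identifications in Lemmas~\ref{lem-badapple}, \ref{break-up} and \ref{lem-helper-cut'}, so that the transcendence of $x\in K[x,x^{-1}]$ really does transfer to $s_\mu\in\KP_K(\Lambda)$. Once that is secured, the rest is an elementary calculation exploiting invertibility of the cycle and finite-dimensionality of $B$.
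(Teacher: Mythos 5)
Your proposal is correct and follows the paper's overall strategy: simplicity is quoted from \cite{ACaHR}, failure of pure infiniteness is exactly Proposition~\ref{prop-notpi}, and failure of local matriciality is detected by the element $s_\mu$ of a single red cycle generating a copy of $K[x,x^{-1}]$. The difference lies in how that copy is produced. The paper works directly inside $\KP_K(\Lambda)$: since $v:=r(\mu)=s(\mu)$ receives only $\mu$ among red paths of length $|\mu|$, (KP4$'$) at $v$ with $n=|\mu|e_2$ gives $p_v=s_\mu s_{\mu^*}$, so $\{p_v,s_\mu,s_{\mu^*}\}$ is a Leavitt family for the one-vertex, one-loop graph and the graded-uniqueness argument from the proof of Lemma~\ref{lem-badapple} applies verbatim, with no subgraph machinery. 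You instead pass through the depth-zero subgraph, invoking Lemmas~\ref{lem-helper-cut'} and~\ref{break-up} with $N=0$ and applying Lemma~\ref{lem-badapple} to the isolated cycle $\Lambda_{0,i}$, then transporting the generator through the canonical embeddings; this has the small virtue of using Lemma~\ref{lem-badapple} only where its stated hypotheses literally hold (the paper must appeal to its proof rather than its statement), at the cost of the bookkeeping you flag and of checking that the $N=0$ instances of those lemmas go through (they do). Your concluding contradiction is also more elaborate than necessary: once $p_v,s_\mu,s_\mu^2,\dots$ are linearly independent, any finite-dimensional subalgebra containing $s_\mu$ contains the positive powers $s_\mu,\dots,s_\mu^{\dim_K B+1}$ and is already contradicted, so the corner reduction and the identification $1_B=p_v$ can be dropped; if you do keep the corner step, note that $p_vA_np_v$ is only visibly matricial once $p_v\in A_n$, which holds for all sufficiently large $n$. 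These are streamlinings, not gaps.
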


\begin{proof}
By \cite[Theorem~6.1]{ACaHR},  $\KP_K(\Lambda)$ is simple, and by Proposition~\ref{prop-notpi} it is not purely infinite. To see that it is not locally matricial, consider the element $s_\mu$ associated to a single red cycle $\mu$. Since $v:=r(\mu)=s(\mu)$ receives just one red path of length $|\mu|$, namely $\mu$, the Kumjian-Pask relation (KP4$'$) at $v$  for $n=|\mu|e_2$ (which only involves red paths) says that $p_v=s_\mu s_\mu^*$. But, as seen in the proof of Lemma~\ref{lem-badapple}, the subalgebra generated by  $s_\mu$ is isomorphic to the Leavitt path algebra of the directed graph consisting of a single vertex $w$ and a single loop $e$ at $w$, which is in turn isomorphic to $K[x,x^{-1}]$. Thus $s_\mu$ generates an infinite-dimensional algebra, and  does not lie in a finite-dimensional subalgebra.
\end{proof}


\section{Desourcification}
Throughout this section,  $\Lambda$ is a locally convex, row-finite $k$-graph.
In Theorem~\ref{thm:m_context}  we show that the Kumjian-Pask algebra of $\Lambda$ is
Morita equivalent to a Kumjian-Pask algebra of a certain $k$-graph $\tL$ 
without sources. The graph $\tL$  is called the \emph{desourcification of $\Lambda$}.  The construction of $\tL$, and the Morita equivalence of the $C^*$-algebras $C^*(\Lambda)$ and $C^*(\tL)$ goes back to Farthing \cite{F}.  Farthing's construction was refined by Robertson and Sims in \cite{RS2009}, and then generalised to finitely aligned $k$-graphs by Webster in \cite{W}. The difference in the approaches is that Farthing's construction adds paths and vertices to $\Lambda$ to obtain a graph $\overline{\Lambda}$, and the Robertson-Sims-Webster construction abstractly builds a $\tilde\Lambda$ which is then shown to contain a copy of $\Lambda$. When $\Lambda$ is locally convex, 
$\overline\Lambda$ and $\tilde\Lambda$ are isomorphic.

We use $\tilde\Lambda$, and  start by giving the details about $\tilde\Lambda$ that we need. Set \[V_\Lambda:=\{(x;m):x \in \Lambda^{\leq \infty}, m \in \N^k\}\text{\ and\ }P_\Lambda:=\{(x;(m,n)):
x \in \Lambda^{\leq \infty}, m \leq n \in \N^k\};\] 
the vertices and paths of $\tilde\Lambda$ are quotients of these sets, respectively.  
For this, define relations $\approx$ on $V_\Lambda$ and $\sim$ on 
$P_\Lambda$ as in Definitions~4.2 and~4.3 of \cite{W}. First, 
define $(x;m) \approx (y;p)$  in $V_\Lambda$ if and only if
\begin{enumerate}
\item[(V1)] $x(m\wedge d(x))=y(p\wedge d(y))$ and
\item[(V2)] $m-m\wedge d(x) = p- p \wedge d(y)$.
\end{enumerate}
It is straightforward to check that $\approx$ is an equivalence relation, and we denote the class of $(x;n)$ by $[x;n]$.
We view $V_{\Lambda}/\!\!\approx$ as a set of vertices that includes
a copy of the vertices in $\Lambda^0$. Indeed, if $v \in \Lambda^0$, then $v=x(0)$ for some $x \in \Lambda^{\leq \infty}$; we can think of $\Lambda^0$ with the equivalence classes of elements $(x;0)$, and item (V1) above  ensures
this identification is well defined.   
The set $V_\Lambda/\!\!\approx$ also includes elements that are not identified
with vertices in $\Lambda^0$.  
In particular, for each $x \in \Lambda^{\leq \infty}$ 
with $d(x) < \infty$, there is one element in $V_\Lambda/\!\!\approx$ for each  $m > d(x)$. 
  
Second, define $(x;(m,n))\sim (y;(p,q))$ in $P_\Lambda$ if and only if
\begin{enumerate}
\item[(P1)] $x(m \wedge d(x),n \wedge d(x))=y(p \wedge d(y), q \wedge d(y))$,
\item[(P2)] $m-m \wedge d(x) = p - p \wedge d(y)$ and
\item[(P3)] $n - m = q - p$.
\end{enumerate}
Again, it is straightforward to check that $\sim$ is an equivalence relation, and  we denote the class of $(x;(m,n))$ by $[x;(m,n)]$. The next proposition is \cite[Proposition~4.9]{W}, and says that we can view each $[x;(m,n)] \in P_{\Lambda}/\!\!\sim$ 
as a morphism between vertices $[(x;n)]$ and $[(x;m)]$ in $V_{\Lambda}/\!\!\approx$ of degree $n-m$.

\begin{prop}[Farthing, Webster] Let $\Lambda$ be a locally convex, row-finite 
$k$-graph.
Define
\begin{align*}
&\tL^0 := V_\Lambda/\!\!\approx \text{\ and\ }\tL := P_{\Lambda}/\!\!\sim, \\
&r,s:\tL \to \tL^0  \text{ by } 
r([x;(m,n)]) = [x;m] \text{ and }
 s([x;(m,n)]) = [x;n],\\
&\operatorname{id}([x;m])=[x;(m,m)],\\
&[x;(m,n)] \circ [y;(p,q)] = [x(0,n \wedge d(x))\sigma^{p \wedge d(y)}(y);(m,n+p-q)],
\text{ and}\\
&d:\tL \to \N^k \text{ by } 
d(v) = 0 \text{ for all } v \in \tL^0 \text{ and }
d([x;(m,n)]) = n-m. 
\end{align*}
Each of these functions is well defined, and 
 $\tL=(\tL^0, \tL, r, s, \operatorname{id},\circ, d)$  is a $k$-graph without sources.  We call $\tL$ the
\emph{desourcification} of $\Lambda$.  
\end{prop}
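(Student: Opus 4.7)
The plan is to verify, in order, (a) that $\approx$ and $\sim$ are equivalence relations; (b) that $r$, $s$, $\operatorname{id}$, $\circ$ and $d$ descend to the quotients as stated; (c) the category axioms for $\tL$; (d) the factorisation property; and (e) that $\tL$ has no sources. The overall strategy is to reduce every claim to a short calculation in $\Lambda$, using repeatedly that $x(0, n\wedge d(x)) \in \Lambda^{\leq n}$ by \cite[Lemma~3.6]{RSY03}, and that paths in $\Lambda$ factor uniquely.

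For (a), reflexivity is immediate and symmetry follows from the symmetric form of (V1)--(V2) and (P1)--(P3). Transitivity in each case is a diagram chase: given $(x;m) \approx (y;p) \approx (z;r)$, conditions (V1) provide agreement of truncated paths in $\Lambda$, so unique factorisation in $\Lambda$ forces $x(m\wedge d(x)) = z(r\wedge d(z))$, while the (V2) equalities combine by transitivity in $\N^k$; the argument for $\sim$ is analogous. For (b), I will check each operation on representatives. Well-definedness of $r$ and $s$ falls out directly from (P1)--(P3): restricting the common subpath in (P1) to its initial (respectively terminal) vertex supplies (V1) for $r$ and $s$, while (P2) is exactly the (V2) condition for $r$, and (P2) combined with (P3) yields (V2) for $s$. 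Well-definedness of $\operatorname{id}$ is a routine substitution of $n=m$ into (P1)--(P3), and of $d$ is just (P3). Composition is the subtle map: after checking that the range-matching condition $s([x;(m,n)]) = r([y;(p,q)])$ is well-defined, I will verify independence of the representative by replacing $(x;(m,n))$ with a $\sim$-equivalent $(x';(m',n'))$, using the factorisation property of $\Lambda$ to compare $x(0,n\wedge d(x))\sigma^{p\wedge d(y)}(y)$ with $x'(0,n'\wedge d(x'))\sigma^{p'\wedge d(y)}(y)$ coordinate-by-coordinate.

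For (c), associativity and the identity laws translate, via the composition formula, to associativity of composition in $\Lambda$ together with matching arithmetic in $\N^k$. The main obstacle is (d): given $[x;(m,n)]$ with $n-m = a+b$, the natural candidate factorisation is
\[
[x;(m,n)] = [x;(m, m+a)] \circ [x;(m+a, n)],
\]
and one checks by direct substitution into the composition formula that the right-hand side is indeed $[x;(m,n)]$.  For uniqueness, suppose $[y;(p,q)] \circ [z;(p',q')] = [x;(m,n)]$ with degrees $a$ and $b$ respectively. The equalities of ranges, sources, and composed paths, unpacked through (V1)--(V2) and (P1)--(P3), yield identifications of truncated paths in $\Lambda$ that, together with unique factorisation in $\Lambda$, force $[y;(p,q)] = [x;(m,m+a)]$ and $[z;(p',q')] = [x;(m+a,n)]$. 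The bookkeeping required to accommodate representatives with $m, n$ or $p, q$ exceeding $d(x), d(y)$ is where the argument is most delicate.

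Finally, for (e), fix a vertex $[x;m] \in \tL^0$ and $1 \leq i \leq k$. The class $[x;(m, m+e_i)]$ is a morphism in $\tL$ of degree $e_i$ with range $[x;m]$, so $[x;m]\tL^{e_i} \neq \emptyset$. Since this works for every $i$, $\tL$ has no sources. I anticipate the main technical obstacle throughout to be the well-definedness and uniqueness of composition when the indices $m,n,p,q$ exceed the degrees of the boundary-path representatives, as this is precisely where (V2) and (P2) contribute nontrivially and where one must use local convexity of $\Lambda$ to guarantee that the purely ``virtual'' coordinates behave consistently.
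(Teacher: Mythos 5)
Your outline is essentially correct, but it takes a genuinely different route from the paper, because the paper does not prove this proposition at all: it is quoted verbatim (with attribution) from Webster \cite[Proposition~4.9]{W}, building on Farthing \cite{F}, and the authors simply cite that source. What you propose is a self-contained reconstruction of Webster's direct verification, and your key steps are the right ones: extracting (V1)--(V2) for $r$ and $s$ from (P1)--(P3), the candidate factorisation $[x;(m,m+a)]\circ[x;(m+a,n)]$ of $[x;(m,n)]$, and exhibiting $[x;(m,m+e_i)]\in [x;m]\tL^{e_i}$ to rule out sources. Two caveats. First, the composition formula as printed in the statement has a typo: the second entry should be $n+q-p$, not $n+p-q$ (otherwise $d$ is not additive and your ``direct substitution'' check of the factorisation would fail); you should use Webster's formula $[x(0,n\wedge d(x))\sigma^{p\wedge d(y)}(y);(m,n+q-p)]$ rather than verify the printed one. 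Second, your anticipation that local convexity controls the ``virtual'' coordinates is slightly misplaced: the verification of well-definedness, the category axioms and the factorisation property is purely formal manipulation with boundary paths, and the place where the real bookkeeping lies is the one you defer --- in the uniqueness half of factorisation and in well-definedness of $\circ$ one must use the composability condition $[x;n]=[y;p]$, i.e.\ (V1)--(V2), to control minima such as $p\wedge d(w)$ versus $p\wedge d(y)$ for the composed representative $w$. Local convexity enters instead through the choice of $\Lambda^{\leq\infty}$ as the boundary-path space (so that, for instance, every vertex of $\Lambda$ receives a boundary path, shifts and prepended finite extensions of boundary paths are boundary paths, and the construction agrees with Farthing's $\overline\Lambda$); with those facts in hand your plan goes through and is, in substance, the proof in \cite{W}.
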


We have decided to use $d,r$ and $s$ to denote the appropriate maps in 
both $\Lambda$ and $\tL$.  This doesn't seem to cause any confusion.   The following remark and lemmas
 give insight into the structure of $\tilde\Lambda$.

\begin{rmk}\label{rmk-structurelambda0}
\begin{enumerate}
\item\label{it:rmk6.2a} The map
$\iota:\Lambda \to \tL$ defined by $\iota(\lambda) = [\lambda x;(0,d(\lambda))]$
for $x \in s(\lambda)\Lambda^{\leq \infty}$ is a well-defined, injective $k$-graph 
morphism  \cite[Proposition~4.13]{W}.  Notice that if $v \in \Lambda^0$, then  $\iota(v) = [x; (0,0)]$ for some $x \in v\Lambda^{\leq \infty}$. 
\item The map
$\pi:\tL \to \iota(\Lambda)$  defined by  
$\pi([y;(m,n)] = [y; (m \wedge d(y), n \wedge d(y))]$ 
is a well-defined, surjective $k$-graph morphism  such that $\pi \circ \pi = \pi$ and
$\pi \circ \iota=\iota$ \cite[page~168]{W}.
\item\label{item-c} If $\mu, \lambda \in \iota(\Lambda^0)\tL$ such 
that $d(\lambda)=d(\mu)$ and $\pi(\lambda)=\pi(\mu)$, then $\lambda=\mu$  \cite[Lemma~4.19]{W}.
\end{enumerate}
\end{rmk}

\begin{lemma}\label{lem:extending_bd_paths}
 Suppose $\mu := [z; (0,n)] \in \tL$.  Then $\mu \in \iota(\Lambda)$ if and only if 
$d(z) \geq n$.
\end{lemma}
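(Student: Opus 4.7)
The plan is to unpack Remark~\ref{rmk-structurelambda0}\eqref{it:rmk6.2a} together with the relation $\sim$ on $P_\Lambda$.

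For the forward direction, assume $\mu=\iota(\lambda)$ for some $\lambda\in\Lambda$. By definition, $\iota(\lambda)=[\lambda x;(0,d(\lambda))]$ for any (equivalently, some) $x\in s(\lambda)\Lambda^{\leq\infty}$. Thus $(z;(0,n))\sim(\lambda x;(0,d(\lambda)))$, and I will apply conditions (P1)--(P3). Condition (P3) immediately gives $n=d(\lambda)$. Since $\lambda$ is a finite path with $d(\lambda x)\ge d(\lambda)=n$, condition (P1) simplifies to
\[
z\bigl(0,n\wedge d(z)\bigr) \;=\; (\lambda x)(0,n) \;=\; \lambda,
\]
and since the right-hand side has degree $n$, we must have $n\wedge d(z)=n$, i.e. $d(z)\ge n$.

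For the converse, assume $d(z)\ge n$ and set $\lambda:=z(0,n)\in\Lambda$, so $d(\lambda)=n$ and $s(\lambda)=z(n)$. Because $n\le d(z)$, the shift $x:=\sigma^n(z)$ is a well-defined element of $s(\lambda)\Lambda^{\leq\infty}$, and $\lambda x=z$ by the converse factorisation property for boundary paths. Hence $\iota(\lambda)=[\lambda x;(0,d(\lambda))]=[z;(0,n)]=\mu$, as required.

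There is essentially no hard step here; the only thing to be mindful of is the use of $n\wedge d(z)$ in condition (P1), which is what forces $d(z)\ge n$ in the forward direction, and the fact that the equivalence class $[\lambda x;(0,d(\lambda))]$ is independent of the choice of tail $x\in s(\lambda)\Lambda^{\leq\infty}$ (as noted in Remark~\ref{rmk-structurelambda0}\eqref{it:rmk6.2a}), so that choosing $x=\sigma^n(z)$ in the converse is legitimate.
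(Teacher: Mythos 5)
Your proof is correct and takes essentially the same route as the paper: the converse is identical (factor $z=\lambda\,\sigma^n(z)$ with $\lambda=z(0,n)$ and use well-definedness of $\iota$), and in the forward direction you obtain the key identity $n\wedge d(z)=n$ by unpacking (P1)--(P3) directly, whereas the paper gets it by applying $\pi$ and using $\pi\circ\iota=\iota$ together with degree-preservation of $\iota$ --- a purely cosmetic difference, since $\pi$ is defined exactly by the truncation $n\mapsto n\wedge d(z)$.
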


\begin{proof}
Suppose $\mu = [z;(0,n)] = \iota(\lambda)$ for some $\lambda \in \Lambda$.  Since $\iota$ is degree-preserving, $d(\lambda) = n$. Since $\pi\circ \iota=\iota$ we have $\pi(\mu)=[z;(0,n\wedge d(z))]=\iota(\lambda)$. Thus $n\wedge d(z)=n$, that is, $d(z)\geq n$.

Conversely, suppose $d(z)\geq n$. Then factor $z=\lambda y$ where $\lambda\in\Lambda^n$ and $y\in\Lambda^{\leq\infty}$. Now $\mu=[z;(0,n)]=[\lambda y;(0,d(\lambda))]=\iota(\lambda)$. 
\end{proof}

The following straightforward lemma is very 
useful and  is used without proof in \cite{W}.
\begin{lemma}
\label{lem:0dlambda}
Suppose $\lambda \in \tL$ such that
 $r(\lambda) \in \iota(\Lambda^0)$.  Then there exists 
$x \in \Lambda^{\leq \infty}$ such that 
$\lambda = [x;(0,d(\lambda))]$.
\end{lemma}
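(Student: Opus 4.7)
The plan is to unpack the representatives of $\lambda$ and use the hypothesis on $r(\lambda)$ to force the representative to start at degree $0$, at which point the right $x$ will just be an appropriate tail of $y$.

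First I would write $\lambda=[y;(m,n)]$ for some $y\in\Lambda^{\leq\infty}$ and $m\leq n$ in $\N^k$, so that $d(\lambda)=n-m$ and $r(\lambda)=[y;m]$. By Remark~\ref{rmk-structurelambda0}\eqref{it:rmk6.2a}, any vertex in $\iota(\Lambda^0)$ is of the form $[z;0]$ for some $z\in\Lambda^{\leq\infty}$. Hence $r(\lambda)\in\iota(\Lambda^0)$ gives $z\in\Lambda^{\leq\infty}$ with $[y;m]\approx[z;0]$. Applying condition (V2) of the equivalence $\approx$ yields
\[
m-m\wedge d(y)=0-0\wedge d(z)=0,
\]
so $m\leq d(y)$. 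This is the crucial consequence of the hypothesis and lets us shift by $m$.

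Next I would set $x:=\sigma^m(y)$, which makes sense because $m\leq d(y)$; note $x\in\Lambda^{\leq\infty}$ and $d(x)=d(y)-m$. I would then verify $[y;(m,n)]\sim[x;(0,n-m)]$ by checking (P1)--(P3). For (P2), both sides vanish since $m=m\wedge d(y)$ and $0=0\wedge d(x)$. For (P3), $n-m=(n-m)-0$. For (P1), using $m\leq d(y)$ one computes $(n-m)\wedge d(x)=(n\wedge d(y))-m$, whence
\[
x\bigl(0,(n-m)\wedge d(x)\bigr)=\sigma^m(y)\bigl(0,(n\wedge d(y))-m\bigr)=y\bigl(m,n\wedge d(y)\bigr)=y\bigl(m\wedge d(y),n\wedge d(y)\bigr).
\]
Thus $\lambda=[x;(0,n-m)]=[x;(0,d(\lambda))]$, as required.

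The only potentially tricky step is the extraction of $m\leq d(y)$ from the hypothesis via (V2); once that is in hand the verification of $\sim$ is a direct calculation with the definitions. I do not expect any serious obstacle.
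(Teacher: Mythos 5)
Your proof is correct and follows essentially the same route as the paper: use (V2) from $r(\lambda)=[y;m]\approx[z;0]$ to get $m\leq d(y)$, set $x=\sigma^m(y)$, and verify (P1)--(P3) directly. The only cosmetic difference is that the paper treats $m=0$ separately while your computation handles it uniformly, and you make explicit the identity $(n-m)\wedge d(x)=(n\wedge d(y))-m$ that the paper leaves implicit.
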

 \begin{proof}
  Write $\lambda = [z;(m,n)]$ for some $m,n \in \N^k$ and $z\in \Lambda^{\leq \infty}$.
If $m=0$, then $d(\lambda)=n$ and we are done.  So suppose $m > 0$.  Because
 $[z;m]=r(\lambda) \in \iota(\Lambda^0)$,
$[z;m] \approx [y;0]$ for some $y \in \Lambda^{\leq \infty}$.  By (V2) $m-m \wedge d(z) = 0$. 
Then $m = m \wedge d(z)$, and hence $m \leq d(z)$.  Let $x:=\sigma^m(z)$. 
It suffices to show that $(x;(0,n-m)) \sim (z;(m,n))$.  
Items (P2) and (P3) are obvious.  To see (P1), notice
\begin{align*}
 z(m \wedge d(z),n \wedge d(z)) &= z(m,n \wedge d(z))= \sigma^m(z)(0, n \wedge d(z)-m)\\
&= x(0, n \wedge d(z)-m)= x(0, (n-m) \wedge d(x))
\end{align*}
as needed.
 \end{proof}

\begin{lemma}\label{lem:Claire_13_14}
Suppose $v \in \iota(\Lambda^0)$ and  $\lambda \in v \tilde{\Lambda}^n$ with
$\pi(\lambda) = v$.  If 
$\mu \in v\tilde{\Lambda}^{n}$,
then $\lambda=\mu$. 
\end{lemma}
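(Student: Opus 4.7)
The natural route is to reduce the claim to Remark~\ref{rmk-structurelambda0}\eqref{item-c}: since $r(\lambda)=r(\mu)=v\in\iota(\Lambda^0)$ and $d(\lambda)=d(\mu)=n$, it suffices to prove $\pi(\mu)=\pi(\lambda)=v$. So the real content is to argue that \emph{every} $\mu\in v\tilde\Lambda^n$ is collapsed by $\pi$ to $v$, once we know \emph{some} $\lambda$ is.

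First I would write $v=\iota(w)$ for $w\in\Lambda^0$ and, using Lemma~\ref{lem:0dlambda}, choose boundary-path representatives $\lambda=[y;(0,n)]$ and $\mu=[x;(0,n)]$ with $y,x\in w\Lambda^{\leq\infty}$. Unpacking $\pi(\lambda)=v$ with the formula $\pi([y;(0,n)])=[y;(0,n\wedge d(y))]$ and comparing with $\iota(w)=[y;(0,0)]$ via (P3) gives $n\wedge d(y)=0$. Coordinatewise this says $d(y)_i=0$ for every $i$ with $n_i>0$.

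The key deduction is now purely about the graph $\Lambda$: such a $y$ forces $w$ to emit no edges in the ``$n$-directions''. Indeed, fix $i$ with $n_i>0$. Because $d(y)_i=0$, the vertex $0\in\Omega_{k,d(y)}^0$ satisfies $0\,\Omega_{k,d(y)}^{\leq e_i}=\{0\}$; so the boundary-path condition \eqref{eq:boundarypath} yields $w\Lambda^{\leq e_i}=\{w\}$, which forces $w\Lambda^{e_i}=\emptyset$.

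With this in hand, the same conclusion transfers to $x$: since $x$ is a boundary path with $r(x)=w$, factorisation of any prefix of length $e_i$ would produce an element of $w\Lambda^{e_i}$, which we have just shown is empty. Hence $d(x)_i=0$ for every $i$ with $n_i>0$, and so $n\wedge d(x)=0$. Therefore $\pi(\mu)=[x;(0,n\wedge d(x))]=[x;(0,0)]=\iota(w)=v=\pi(\lambda)$, and Remark~\ref{rmk-structurelambda0}\eqref{item-c} delivers $\mu=\lambda$. I expect the only mildly subtle step is the passage from $n\wedge d(y)=0$ to $w\Lambda^{e_i}=\emptyset$ via the boundary-path axiom; once this is extracted, the rest is bookkeeping with the equivalence relations defining $\tilde\Lambda$.
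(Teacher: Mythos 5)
Your proposal is correct and follows essentially the same route as the paper's proof: reduce via Remark~\ref{rmk-structurelambda0}\eqref{item-c} to showing $\pi(\mu)=\pi(\lambda)$, use Lemma~\ref{lem:0dlambda} to pick boundary-path representatives, deduce $n\wedge d(y)=0$ from $\pi(\lambda)=v$, invoke the boundary-path condition \eqref{eq:boundarypath} to get $v\Lambda^{e_i}=\emptyset$ whenever $n_i>0$, and conclude $n\wedge d(x)=0$. The only cosmetic difference is that you justify $n\wedge d(y)=0$ via (P3) rather than directly from the degree of $\pi(\lambda)$, which is equally fine.
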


\begin{proof} By \cite[Lemma~4.19]{W} (see Remark~\ref{rmk-structurelambda0}(\ref{it:rmk6.2a})),
it suffices to show that $\pi(\mu) = \pi(\lambda)$.  
Since $r(\lambda) = r(\mu) = v \in \iota(\Lambda^0)$, 
by Lemma~\ref{lem:0dlambda} 
there exist $x,y \in \Lambda^{\leq \infty}$ such that
\[
 \lambda = [y;(0,n)],
\mu = [x;(0,n)] \text{ and }
[x,0]=[y,0] = v.\]
By definition of $\pi$,
\[ \pi(\lambda) = [y; (0, n \wedge d(y))] \text{ and }\\
\pi(\mu) = [x;(0, n \wedge d(x))].
\]
By assumption, $\pi(\lambda) = v$, and hence $n \wedge d(y) = 0$.
We need to show that $n \wedge d(x) = 0$ as well; we do this by showing that if $n_i\neq 0$ then $d(x)_i=0$.
Suppose $n_i\neq 0$. Since $n \wedge d(y) = 0$ we have $d(y)_i=0$. 
Since $y$ is a boundary path,
$0 \Omega_{k,d(y)}^{\leq e_i} = \{0\}$.  
Now condition (\ref{eq:boundarypath}) gives  
$y(0)\Lambda^{\leq e_i} = \{y(0)\}$.   Hence $y(0)\Lambda^{e_i}=v\Lambda^{e_i}=\emptyset$.  But $x(0)=v$ and so $d(x)_i=0$ as well. Thus $n\wedge d(x)=0$.
Now $\pi(\lambda)=v=\pi(\mu)$ and $\lambda,\mu$ have the same degree, so $\lambda=\mu$ by  \cite[Lemma~4.19]{W}.
\end{proof}

\begin{lemma}\label{lem-minextensions}
Let $\lambda,\mu\in\Lambda$. Then $\tilde\Lambda^{\min}(\iota(\lambda),\iota(\mu))=\iota(\Lambda)^{\min}(\iota(\lambda),\iota(\mu))$.
\end{lemma}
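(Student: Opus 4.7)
My plan is to prove the two inclusions separately. The containment $\iota(\Lambda)^{\min}(\iota(\lambda),\iota(\mu))\subseteq\tL^{\min}(\iota(\lambda),\iota(\mu))$ is immediate because $\iota$ is a degree-preserving functor: if $(\alpha,\beta)\in\Lambda^{\min}(\lambda,\mu)$, then $\iota(\lambda)\iota(\alpha)=\iota(\lambda\alpha)=\iota(\mu\beta)=\iota(\mu)\iota(\beta)$, and the common path has degree $d(\lambda\alpha)=d(\lambda)\vee d(\mu)$.

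For the reverse inclusion, I will take $(\alpha',\beta')\in\tL^{\min}(\iota(\lambda),\iota(\mu))$, set $n:=d(\lambda)\vee d(\mu)$, and show that $\alpha',\beta'\in\iota(\Lambda)$. Since $r(\alpha')=\iota(s(\lambda))\in\iota(\Lambda^0)$, Lemma~\ref{lem:0dlambda} lets me write $\alpha'=[x;(0,n-d(\lambda))]$ for some $x\in s(\lambda)\Lambda^{\leq\infty}$, and analogously $\beta'=[y;(0,n-d(\mu))]$ for some $y\in s(\mu)\Lambda^{\leq\infty}$. Using the representative $\iota(\lambda)=[\lambda x;(0,d(\lambda))]$ afforded by Remark~\ref{rmk-structurelambda0}(\ref{it:rmk6.2a}), together with the composition rule in $\tL$, a short calculation gives $\iota(\lambda)\alpha'=[\lambda x;(0,n)]$ and, symmetrically, $\iota(\mu)\beta'=[\mu y;(0,n)]$. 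The assumed equality of these two expressions then yields, via condition (P1) of $\sim$, the identity $\eta:=(\lambda x)(0,n\wedge d(\lambda x))=(\mu y)(0,n\wedge d(\mu y))$ as a single finite path in $\Lambda$, of degree $n\wedge d(\lambda x)\le n$.

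The heart of the argument is then a squeeze on $d(\eta)$. Since $d(\lambda)\le n$ and $d(\lambda)\le d(\lambda x)$, we have $d(\lambda)\le n\wedge d(\lambda x)$, so $\lambda$ is a prefix of $\eta$; the analogous statement holds for $\mu$. Hence $\eta=\lambda\alpha=\mu\beta$ for some $\alpha,\beta\in\Lambda$, making $\eta$ a common extension of $\lambda$ and $\mu$ in $\Lambda$. Any such common extension satisfies $d(\eta)=d(\lambda)+d(\alpha)\ge d(\lambda)$ and $d(\eta)\ge d(\mu)$, so $d(\eta)\ge n$. Combined with $d(\eta)\le n$, this forces $d(\eta)=n$, and therefore $d(\lambda x)\ge n$. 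Lemma~\ref{lem:extending_bd_paths} then gives $\alpha'\in\iota(\Lambda)$, so $\alpha'=\iota(\alpha)$ for the path $\alpha$ just produced; similarly $\beta'=\iota(\beta)$. Injectivity of $\iota$ turns $\iota(\lambda)\iota(\alpha)=\iota(\mu)\iota(\beta)$ into $\lambda\alpha=\mu\beta$ in $\Lambda$, so $(\alpha,\beta)\in\Lambda^{\min}(\lambda,\mu)$ and $(\alpha',\beta')\in\iota(\Lambda)^{\min}(\iota(\lambda),\iota(\mu))$.

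The main obstacle will be the bookkeeping for the composition formula in $\tL$ and the careful application of (P1); the clean trick is using a single boundary path $x$ both as the representative of $\alpha'$ and (after prefixing by $\lambda$) as a carrier for $\iota(\lambda)$, so that $\iota(\lambda)\alpha'$ collapses immediately to $[\lambda x;(0,n)]$. Once that identification is in place, the squeeze $n\le d(\eta)\le n$ is the essence of the proof.
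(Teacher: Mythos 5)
Your argument is correct, and it proves the nontrivial inclusion by a different arrangement of the same two ingredients the paper uses. The paper applies Lemma~\ref{lem:0dlambda} once to the common path $\iota(\lambda)\alpha=\iota(\mu)\beta$, getting a single carrier $[z;(0,d(\lambda)\vee d(\mu))]$; it then gets $d(z)\geq d(\lambda)$ and $d(z)\geq d(\mu)$ by applying Lemma~\ref{lem:extending_bd_paths} to the prefixes $\iota(\lambda)=[z;(0,d(\lambda))]$ and $\iota(\mu)=[z;(0,d(\mu))]$ (which are already known to lie in $\iota(\Lambda)$), concludes $\iota(\lambda)\alpha\in\iota(\Lambda)$ by the same lemma, and finishes with unique factorisation to put $\alpha,\beta$ themselves in $\iota(\Lambda)$. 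You instead apply Lemma~\ref{lem:0dlambda} to $\alpha'$ and $\beta'$ separately, compute the compositions over the two carriers $\lambda x$ and $\mu y$, unpack (P1) to obtain the common truncation $\eta$, and run a degree squeeze: $\eta$ is a common extension of $\lambda$ and $\mu$ in $\Lambda$, so $d(\eta)\geq d(\lambda)\vee d(\mu)$, while $d(\eta)\leq d(\lambda)\vee d(\mu)$ by construction, forcing $d(x)\geq (d(\lambda)\vee d(\mu))-d(\lambda)$; Lemma~\ref{lem:extending_bd_paths} then places $\alpha'$ (and likewise $\beta'$) directly in $\iota(\Lambda)$, so you never need the final unique-factorisation step. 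The paper's route is shorter because it avoids the composition-formula and (P1) bookkeeping; yours buys an explicit description of $\alpha'$ and $\beta'$. The only spot deserving one more line is the identification $\alpha'=\iota(\alpha)$: factor $x=\alpha z$ with $z\in\Lambda^{\leq\infty}$ and note $[x;(0,d(\alpha))]=[\alpha z;(0,d(\alpha))]=\iota(\alpha)$ --- though in fact, once you know $\alpha',\beta'\in\iota(\Lambda)$, membership in $\iota(\Lambda)^{\min}(\iota(\lambda),\iota(\mu))$ is automatic, since the defining equality and degree condition are inherited from $\tL$.
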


\begin{proof}
 This is essentially \cite[Lemma~4.22]{W}.  Since
$\iota(\Lambda) \subseteq \tL$, we have \[\iota(\Lambda)^{\min}(\iota(\lambda),\iota(\mu))
 \subseteq \tilde\Lambda^{\min}(\iota(\lambda),\iota(\mu)).\]  Let $(\alpha, \beta) \in \tilde\Lambda^{\min}(\iota(\lambda),\iota(\mu))$.
Then
\[\iota(\lambda)\alpha = \iota(\mu)\beta = [z;(0,d(\lambda) \vee d(\mu))]\]
for some $z \in \Lambda^{\leq \infty}$ by Lemma~\ref{lem:0dlambda}.  
Now\[
    (\iota(\lambda)\alpha)(0,d(\lambda)) = \iota(\lambda) = [z;(0,d(\lambda))]
   \]
by the factorisation property in $\tL$.  So $d(z) \geq d(\lambda)$
by Lemma~\ref{lem:extending_bd_paths}.  Similarly, $d(z) \geq d(\mu)$.
Hence $d(z) \geq d(\lambda) \vee d(\mu)$ and so $\iota(\lambda)\alpha \in \iota(\Lambda)$
by Lemma~\ref{lem:0dlambda} again.
Now by the unique factorisation property, both $\alpha$ and $\beta$ are in $\iota(\Lambda)$.
Thus \[\tilde\Lambda^{\min}(\iota(\lambda),\iota(\mu)) \subseteq \iota(\Lambda)^{\min}(\iota(\lambda),\iota(\mu))\]
as needed.
\end{proof}


\section{The Morita equivalence of  $\KP_R(\Lambda)$ and  $\KP_R(\tL)$}\label{sec-Morita}
Throughout this section, $\Lambda$ is a locally convex, row-finite $k$-graph and $R$ is a commutative ring with $1$.
We start by identifying $\KP_R(\Lambda)$ with a subalgebra of $\KP_R(\tL)$.
\begin{prop}
\label{prop:Claire_Prop_11''}
Let $\Lambda$ be a locally convex, row-finite $k$-graph and let $\tilde{\Lambda}$ be its 
desourcification. Let $(q,t)$ be  a universal  $\KP_R(\tilde{\Lambda})$-family.
Let $B(\tilde\Lambda)$ be the subalgebra  of $\KP_R(\tilde{\Lambda})$ generated by $\{q_{\iota(v)},
 t_{\iota(\lambda)}, t_{\iota(\mu^*)}:v\in\Lambda^0, \lambda,\mu\in\Lambda\}$. There is a  graded isomorphism of  $\KP_R(\Lambda)$ onto $B(\tilde\Lambda)$. 
\end{prop}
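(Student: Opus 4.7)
The plan is to pull back the generators of $B(\tilde\Lambda)$ to build a Kumjian-Pask $\Lambda$-family inside $\KP_R(\tL)$, use the universal property of $\KP_R(\Lambda)$ to obtain a graded homomorphism onto $B(\tL)$, and then apply the graded-uniqueness theorem to get injectivity.

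Define $P_v := q_{\iota(v)}$ for $v\in\Lambda^0$ and $S_\lambda := t_{\iota(\lambda)}$, $S_{\lambda^*} := t_{\iota(\lambda)^*}$ for $\lambda\in\Lambda^{\neq 0}$. Relations (KP1) and (KP2) for $(P,S)$ follow immediately from (KP1) and (KP2) for the universal family $(q,t)$ in $\KP_R(\tL)$ together with the fact that $\iota:\Lambda\to\tL$ is an injective, degree-preserving functor (Remark~\ref{rmk-structurelambda0}\eqref{it:rmk6.2a}). For (KP3$'$) I would invoke Corollary~\ref{cor-altKP3'} and check the equivalent identity
\[
S_{\lambda^*}S_\mu = \sum_{(\alpha,\beta)\in\Lambda^{\min}(\lambda,\mu)} S_\alpha S_{\beta^*}.
\]
Since $\tL$ has no sources, Proposition~\ref{prop:lspfamily} applied with $n=d(\lambda)\vee d(\mu)$ gives $t_{\iota(\lambda)^*}t_{\iota(\mu)} = \sum_{(\tilde\alpha,\tilde\beta)\in\tL^{\min}(\iota(\lambda),\iota(\mu))} t_{\tilde\alpha}t_{\tilde\beta^*}$. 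By Lemma~\ref{lem-minextensions}, every such $(\tilde\alpha,\tilde\beta)$ has the form $(\iota(\alpha),\iota(\beta))$ for a unique $(\alpha,\beta)\in\Lambda^{\min}(\lambda,\mu)$, which produces the required identity.

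For (KP4$'$), by Lemma~\ref{lem-kp4alt} it suffices to verify $P_v=\sum_{\lambda\in v\Lambda^{e_i}}S_\lambda S_{\lambda^*}$ whenever $v\Lambda^{e_i}\neq\emptyset$. Applying (KP4) for $(q,t)$ at $\iota(v)\in\tL$ gives
\[
q_{\iota(v)} = \sum_{\tilde\mu\in\iota(v)\tL^{e_i}} t_{\tilde\mu}t_{\tilde\mu^*},
\]
so I need to show that $\lambda\mapsto\iota(\lambda)$ is a bijection from $v\Lambda^{e_i}$ onto $\iota(v)\tL^{e_i}$. Injectivity is Remark~\ref{rmk-structurelambda0}\eqref{it:rmk6.2a}. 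For surjectivity, fix $\tilde\mu\in\iota(v)\tL^{e_i}$; Lemma~\ref{lem:0dlambda} yields $x\in\Lambda^{\leq\infty}$ with $r(x)=v$ and $\tilde\mu=[x;(0,e_i)]$. Because $v\Lambda^{e_i}\neq\emptyset$, the boundary-path condition \eqref{eq:boundarypath} at $0\in\Omega_{k,d(x)}^0$ forces $d(x)_i\geq 1$, so $x=\lambda y$ with $\lambda\in v\Lambda^{e_i}$, and then $\tilde\mu=\iota(\lambda)$ by Lemma~\ref{lem:extending_bd_paths}. This yields (KP4$'$).

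The universal property of $\KP_R(\Lambda)$ (Theorem~\ref{thm-KP}\eqref{item-a}) now produces an $R$-algebra homomorphism $\phi:\KP_R(\Lambda)\to\KP_R(\tL)$ sending $p_v,s_\lambda,s_{\lambda^*}$ to $P_v,S_\lambda,S_{\lambda^*}$; by construction its image is exactly the subalgebra $B(\tL)$ generated by $\{q_{\iota(v)},t_{\iota(\lambda)},t_{\iota(\lambda)^*}\}$. Since $\iota$ is degree-preserving, $\phi$ carries $\KP_R(\Lambda)_n$ into $\KP_R(\tL)_n$, hence is $\Z^k$-graded. For every $r\in R\setminus\{0\}$ and $v\in\Lambda^0$, Theorem~\ref{thm-KP}\eqref{item-a} applied in $\KP_R(\tL)$ gives $\phi(rp_v)=rq_{\iota(v)}\neq 0$, so Theorem~\ref{gut} (the graded-uniqueness theorem) forces $\phi$ to be injective. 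Thus $\phi$ is a graded isomorphism onto $B(\tL)$.

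The main obstacle will be the verification of (KP4$'$): getting the bijection $v\Lambda^{e_i}\leftrightarrow\iota(v)\tL^{e_i}$ correct requires careful use of the boundary-path condition \eqref{eq:boundarypath} together with Lemmas~\ref{lem:extending_bd_paths} and~\ref{lem:0dlambda}, and in particular the observation that the hypothesis $v\Lambda^{e_i}\neq\emptyset$ is precisely what rules out ``new'' edges in $\tL$ at $\iota(v)$ beyond those coming from $\Lambda$.
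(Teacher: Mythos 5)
Your proposal is correct and follows essentially the same route as the paper: verify that $(q\circ\iota,t\circ\iota)$ is a Kumjian-Pask $\Lambda$-family (using Lemma~\ref{lem-kp4alt} plus the bijection $v\Lambda^{e_i}\to\iota(v)\tL^{e_i}$ for (KP4$'$), and Corollary~\ref{cor-altKP3'} with Lemma~\ref{lem-minextensions} for (KP3$'$)), then apply the universal property and the graded-uniqueness theorem. The only cosmetic difference is that you present (KP3$'$) before (KP4$'$), whereas Corollary~\ref{cor-altKP3'} presupposes (KP4$'$), so the verifications should be ordered as in the paper; this does not affect correctness.
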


\begin{proof}
We show that $(q \circ \iota,t \circ \iota)$ is a Kumjian-Pask $\Lambda$-family in  $\KP_R(\tilde{\Lambda})$; 
we start by verifying (KP1), (KP2) and (KP4$'$), and then we use Corollary~\ref{cor-altKP3'} to verify (KP3$'$).

Since $(q,t)$ is a Kumjian-Pask $\tilde{\Lambda}$-family, 
$\{q_v:v \in \tilde{\Lambda}^0\}$ is an orthogonal set of idempotents, and hence so is
$\{q_{\iota(w)}:w \in \Lambda^0\}$. This gives (KP1) for $(q \circ \iota,t \circ \iota)$.

Let $\lambda, \mu \in \Lambda^{\neq 0}$ with $r(\mu)=r(\lambda)$. Since 
(KP2) holds for $(q,t)$ 
and $\iota$ is a graph morphism, 
$t_{\iota(\lambda)} t_{\iota(\mu)} =t_{\iota(\lambda)\iota(\mu)}
= t_{\iota(\lambda\mu)}$.
Similarly, the other equations in (KP2) hold for $(q \circ \iota,t \circ \iota)$.

For (KP4$'$), it suffices by Lemma~\ref{lem-kp4alt} to show that for $1\leq i\leq k$ with
$v\Lambda^{e_i} \neq \emptyset$, 
\begin{equation*}\label{eq-kp4alt}q_{\iota(v)} = \sum_{\lambda \in v\Lambda^{e_i}} t_{\iota(\lambda)}t_{\iota(\lambda)^*}.
\end{equation*}
Suppose $v\Lambda^{e_i}\neq \emptyset$, and let $\mu \in \iota(v)\tL^{e_i}$.  Then $\mu=[x;(0,e_i)]$ for some $x \in v\Lambda^{\leq \infty}$ by Lemma~\ref{lem:0dlambda}. Since $v\Lambda^{e_i}\neq \emptyset$ and $r(x)=v$, we have $d(x)\geq e_i$. Thus $\mu=\iota(\lambda)$ for some $\lambda\in v\Lambda^{e_i}$ by Lemma~\ref{lem:extending_bd_paths}, and this $\lambda$ is unique because $\iota$ is injective.
Now
\begin{align*}
q_{\iota(v)}&= \sum_{\mu \in \iota(v) \tL^{e_i}} t_\mu t_{\mu^*} 
\quad \text{by (KP4) for $(q,t)$,}\\
&= \sum_{\lambda \in v\Lambda^{e_i}} t_{\iota(\lambda)}t_{\iota(\lambda)^*}.
\end{align*}
Thus (KP4$'$) holds for $(q \circ \iota,t \circ \iota)$.

For (KP3$'$), let  $\lambda,\mu\in\Lambda$. Then 
\begin{align*}
t_{\iota(\lambda)^{*}}t_{\iota(\mu)}=
\sum_{(\sigma,\tau)\in\tilde\Lambda^{\min}(\iota(\lambda),\iota(\mu))}
t_{\sigma} t_{\tau^*} \text{\quad by \eqref{eq-span}}\\
=\sum_{(\alpha,\beta)\in\Lambda^{\min}(\lambda,\mu)}
t_{\iota(\alpha)} t_{\iota(\beta)^*} \text{\quad by Lemma~\ref{lem-minextensions}.}
\end{align*}
By Corollary~\ref{cor-altKP3'}, (KP3$'$) holds for $(q \circ \iota,t \circ \iota)$, and hence
$(q \circ \iota,t \circ \iota)$ is a Kumjian-Pask $\Lambda$-family in $\KP_R(\tilde\Lambda)$.

Let $(p,s)$ be a generating Kumjian-Pask $\Lambda$-family in $\KP_R(\Lambda)$.  Since $(q \circ \iota,t \circ \iota)$ 
is a $\Lambda$-family in $\KP_R(\tL)$, the universal property of $\KP_R(\Lambda)$ (Theorem~\ref{thm-KP}\eqref{item-a}) gives an $R$-algebra homomorphism  $\pi_{q \circ \iota,t \circ \iota}:\KP_R(\Lambda)\to \KP_R(\tilde\Lambda)$ such that $\pi_{q \circ \iota,t \circ \iota}(p_v)=q_{\iota(v)}$,  $\pi_{q \circ \iota,t \circ \iota}(s_\lambda)=t_{\iota(\lambda)}$ and $\pi_{q \circ \iota,t \circ \iota}(s_{\lambda^*})=t_{\iota(\lambda)^*}$.
Since $\iota$ is degree preserving, $\pi_{q \circ \iota,t \circ \iota}$ is graded.  The graded-uniqueness 
theorem (Theorem~\ref{gut})
implies  $\pi_{q \circ \iota,t \circ \iota}$ is injective.  Since  $\{q \circ \iota(v), t \circ \iota(\lambda), 
t \circ \iota(\mu^*)\}$ generates $B(\tilde\Lambda)$, the range of $\pi_{q \circ \iota,t \circ \iota}$ is $B(\tilde\Lambda)$.
\end{proof}

\begin{prop}
\label{lem:KPiotaL}
Let $(\Lambda,d)$ be a locally convex, row-finite $k$-graph and let $(\tilde{\Lambda}, d)$ be its
desourcification. Let $(q,t)$ be  a generating Kumjian-Pask $\Lambda$-family in $\KP_R(\tilde{\Lambda})$ and $B(\tilde\Lambda)$ 
be the subalgebra  of $\KP_R(\tilde{\Lambda})$ generated by $\{q_{\iota(v)}, t_{\iota(\lambda)}, t_{\iota(\mu^*)}:v\in\Lambda^0, \lambda,\mu\in\Lambda\}$.
Then
\[B(\tilde\Lambda) = \lsp\{t_{\alpha}t_{\beta^*} : \alpha, \beta \in \tilde{\Lambda},
 r(\alpha),r(\beta) \in \iota(\Lambda^0)\}.\]
\end{prop}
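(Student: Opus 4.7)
The plan is to prove the two inclusions separately, with the hard work concentrated in the ``right-to-left'' direction.

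\emph{Easy direction.} I would first check that $R:=\lsp\{t_{\alpha}t_{\beta^*} : \alpha, \beta \in \tilde{\Lambda},r(\alpha),r(\beta) \in \iota(\Lambda^0)\}$ is a subalgebra of $\KP_R(\tL)$ that contains the generators $\{q_{\iota(v)}, t_{\iota(\lambda)}, t_{\iota(\mu)^*}\}$ of $B(\tL)$. Closure under multiplication follows by expanding $(t_{\alpha}t_{\beta^*})(t_{\gamma}t_{\delta^*}) = t_{\alpha}(t_{\beta^*}t_{\gamma})t_{\delta^*}$ using Proposition~\ref{prop:lspfamily} applied in $\tL$: the middle factor becomes a sum of products $t_{\xi}t_{\eta^*}$ with $\tL$-composability preserving that the outer ranges remain $r(\alpha)$ and $r(\delta)$. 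That $q_{\iota(v)}=t_{\iota(v)}t_{\iota(v)^*}$, $t_{\iota(\lambda)}=t_{\iota(\lambda)}t_{\iota(s(\lambda))^*}$, and $t_{\iota(\mu)^*}=t_{\iota(r(\mu))}t_{\iota(\mu)^*}$ all lie in $R$ is immediate. Hence $B(\tL) \subseteq R$.

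\emph{Hard direction.} I need to show that every spanning element $t_{\alpha}t_{\beta^*}$ (with $r(\alpha),r(\beta)\in \iota(\Lambda^0)$) lies in $B(\tL)$. We may assume the product is nonzero, so $s(\alpha)=s(\beta)$. Using Lemma~\ref{lem:0dlambda}, write $\alpha = [y;(0,d(\alpha))]$ and $\beta = [z;(0,d(\beta))]$, set $n := d(\alpha)\wedge d(y)$ and $m := d(\beta)\wedge d(z)$, and factor
\[
\alpha = \iota(\lambda)\cdot \alpha^{\#}, \qquad \beta = \iota(\mu)\cdot \beta^{\#},
\]
where $\lambda := y(0,n)$, $\mu := z(0,m)$, $\alpha^{\#}:= [y;(n,d(\alpha))]$, and $\beta^{\#}:=[z;(m,d(\beta))]$. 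The direct factors $\iota(\lambda)$ and $\iota(\mu)$ lie in $\iota(\Lambda)$ by Lemma~\ref{lem:extending_bd_paths}, and a quick computation from the definition of $\pi$ shows $\pi(\alpha^{\#}) = [y;(n,n)] = r(\alpha^{\#})$, i.e., $\alpha^{\#}$ is a ``pure tail''.

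\emph{Key claim and finish.} The critical observation is that $\alpha^{\#} = \beta^{\#}$. Indeed, $s(\alpha)=s(\beta)$ means $[y;d(\alpha)]\approx[z;d(\beta)]$, which by conditions (V1) and (V2) gives $y(n) = z(m)$ and $d(\alpha)-n = d(\beta)-m$; the former forces $r(\alpha^{\#}) = r(\beta^{\#})$ and the latter forces $d(\alpha^{\#}) = d(\beta^{\#})$. Since $\alpha^{\#}$ is a pure tail based at a vertex in $\iota(\Lambda^0)$, Lemma~\ref{lem:Claire_13_14} (or equivalently Remark~\ref{rmk-structurelambda0}\eqref{item-c}) says every path of that degree from $r(\alpha^{\#})$ equals $\alpha^{\#}$; in particular $\beta^{\#} = \alpha^{\#}$. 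Consequently, applying (KP4$'$) in the no-source graph $\tL$ at $r(\alpha^{\#})$ with degree $d(\alpha^{\#})$ produces the single-term sum $q_{r(\alpha^{\#})} = t_{\alpha^{\#}}t_{\alpha^{\#*}}$, so
\[
t_{\alpha}t_{\beta^*} \;=\; t_{\iota(\lambda)}\,t_{\alpha^{\#}}\,t_{\alpha^{\#*}}\,t_{\iota(\mu)^*}
\;=\; t_{\iota(\lambda)}\,q_{s(\iota(\lambda))}\,t_{\iota(\mu)^*}
\;=\; t_{\iota(\lambda)}\,t_{\iota(\mu)^*} \in B(\tL).
\]
The main obstacle is proving the key claim $\alpha^{\#} = \beta^{\#}$: everything hinges on unpacking the equivalence relation $\approx$ and invoking the uniqueness of pure tails. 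Once that collapse occurs, (KP4$'$) reducing to a single term is the mechanism that lets $B(\tL)$ swallow the tail.
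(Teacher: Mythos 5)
Your proposal is correct and follows essentially the same route as the paper: factor $\alpha=\pi(\alpha)\gamma$, $\beta=\pi(\beta)\gamma'$ via Lemma~\ref{lem:0dlambda}, show the tails coincide, and use Lemma~\ref{lem:Claire_13_14} with (KP4) to absorb $t_\gamma t_{\gamma^*}$ into $q_{r(\gamma)}$. The only (harmless) difference is that you deduce $\alpha^{\#}=\beta^{\#}$ by matching range and degree via (V1)--(V2) and then invoking the uniqueness statement of Lemma~\ref{lem:Claire_13_14}, whereas the paper verifies (P1)--(P3) directly; both use the same ingredients.
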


\begin{proof}
The $\subseteq$ direction is obvious.  To see the other containment,  consider $t_{\alpha}t_{\beta^*}$ where
$\alpha, \beta \in \tilde{\Lambda}, r(\alpha),r(\beta) \in \iota(\Lambda^0)$.  
We may assume $s(\alpha)=s(\beta)$ for otherwise $t_{\alpha}t_{\beta^*} = 0$ by (KP2) and (KP1).  
Since $r(\alpha),r(\beta) \in \iota(\Lambda^0)$, 
there exist $x,y \in \Lambda^{\leq \infty}$ such that
\[
 \alpha = [x;(0,d(\alpha))] \text{ and }
\beta = [y;(0,d(\beta))]
\] by Lemma~\ref{lem:0dlambda}.
Using the definition of $\pi$ and the factorisation property in $\tL$, there exist
$\gamma,\gamma' \in \tL$ such that
$\alpha = \pi(\alpha)\gamma$ and $\beta=\pi(\beta)\gamma'$. Write  
$\gamma = [x;(d(\alpha) \wedge d(x), d(\alpha))]$ and 
$\gamma' = [y; (d(\beta) \wedge d(y),d(\beta))]$.

We claim that $\gamma = \gamma'.$ 
To prove the claim, we show that  items (P1)--(P3) hold for $(x;(d(\alpha) \wedge d(x), d(\alpha)))$ 
and $(y; (d(\beta) \wedge d(y),d(\beta)))$.
First notice that (P2) is trivial; both the left and right-hand
 sides of the (P2) equation are 0.
 Now (V1) implies that
\begin{equation}
 \label{eq:alpha_beta_v1}
x(d(\alpha) \wedge d(x)) = y(d(\beta) \wedge d(y))
\end{equation}
and (V2) implies that
\begin{equation}
 \label{eq:alphabeta_v2}
d(\alpha)-d(\alpha) \wedge d(x) = d(\beta) -d(\beta) \wedge d(y).
\end{equation}
Notice that equation (\ref{eq:alphabeta_v2}) is precisely 
(P3), and (P1) follows immediately from (\ref{eq:alpha_beta_v1}).  Thus $\gamma=\gamma'$
 as claimed.  

Now we have 
\begin{equation}\label{eq:talphatbeta*}
t_{\alpha}t_{\beta^*} = t_{\pi(\alpha)}t_{\gamma}t_{\gamma^*}t_{\pi(\beta)^*}.
\end{equation}
Our next claim is  that
$t_{\gamma}t_{\gamma^*} = q_{r(\gamma)}$.
Since $\tL$ has no sources, (KP4) says  that 
\[q_{r(\gamma) } = \sum_{\delta \in r(\gamma)  \tL^{d(\gamma)}} t_{\delta}t_{\delta^*}.\]
But $\pi(\gamma)=r(\gamma)$  so  
$r(\gamma)\tL^{d(\gamma)} = \{\gamma\}$
by Lemma~\ref{lem:Claire_13_14}.
Thus $q_{r(\gamma)} = t_{\gamma}t_{\gamma^*}$ as claimed. 
Finally, from~\eqref{eq:talphatbeta*}
we have 
$t_{\alpha}t_{\beta^*} = t_{\pi(\alpha)}q_{r(\gamma)} t_{\pi(\beta^*)} =  
t_{\pi(\alpha)} t_{\pi(\beta^*)}\in B(\tilde\Lambda)$  since the range of $\pi$ is $\iota(\Lambda)$. 
\end{proof}

We are ready to show $\KP_R(\Lambda)$ and $\KP_R(\tL)$ are Morita equivalent.  First,  consider
how the analogous proof  proceeds in the $C^*$-setting in \cite{W} and \cite{F}. 
Let $A$ be a $C^*$-algebra and $p$ a projection in the multiplier algebra $M(A)$ of $A$. Then $pAp$ 
is a sub $C^*$-algebra of $A$ and $\overline{ApA}$ is an ideal of $A$, and $pA$ is a $pAp$--$\overline{ApA}$ 
imprimitivity bimodule , giving a $pAp$--$\overline{ApA}$ Morita equivalence \cite[Example~2.12]{tfb}.  
Both  Farthing and Webster show that the $C^*$-algebra $C^*(\Lambda)$ of a $k$-graph $\Lambda$ is
 Morita equivalent to the $C^*$-algebra $C^*(\tilde\Lambda)$ of the  desourcification $\tilde\Lambda$, 
that $\sum_{v \in \iota(\Lambda^0)} q_v$ converges to a projection $p$ in $M(C^*(\tilde\Lambda))$,
 that $p$ is full in the sense that $C^*(\tilde\Lambda)pC^*(\tilde\Lambda)=C^*(\tilde\Lambda)$, and 
identify $pC^*(\tilde\Lambda)p$ with $C^*(\Lambda)$.  The work required to do this in the algebraic setting of Kumjian-Pask algebras is similar: the sum  $\sum_{v \in \iota(\Lambda^0)} q_v$  may not add up to an idempotent $p$ in $\KP_R(\tilde\Lambda)$, but we can write down  
analogues of $p\KP_R(\tilde\Lambda)$ and $\KP_R(\tilde\Lambda)p$ without explicit reference to $p$.  
If $\iota(\Lambda^0)$ is finite, then $p=\sum_{v \in \iota(\Lambda^0)} q_v$ is defined in $\KP_R(\tilde\Lambda)$, and $\KP_R(\tilde\Lambda)p=\lsp\{t_\lambda t_\mu^*:\lambda,\nu\in\tilde\Lambda, r(\mu)\in\iota(\Lambda^0)\}$; notice that the right-hand-side still makes sense when $\iota(\Lambda^0)$ is infinite.

\begin{lemma}\label{lem-helpMorita} The subset \[M := \lsp\{t_{\lambda}t_{\mu^*} : \lambda, \mu \in \tilde{\Lambda}, r(\mu)\in \iota(\Lambda^0)\}\] of $\KP_R(\tilde\Lambda)$ is closed under multiplication on the left by $\KP_R(\tilde\Lambda)$ and on the right by $B(\tilde\Lambda)$.  The subset \[N := \lsp\{t_{\lambda}t_{\mu^*} : \lambda, \mu \in \tilde{\Lambda}, r(\lambda)\in \iota(\Lambda^0)\}\] of $\KP_R(\tilde\Lambda)$ is closed under multiplication on the left by $B(\tilde\Lambda)$ and on the right by $\KP_R(\tilde\Lambda)$.  Further,
\begin{gather*}
MN:=\lsp\{mn:m\in M, n\in N\}=\KP_R(\tilde\Lambda)\text{\ and\ } \\NM:=\lsp\{nm:m\in M, n\in N\}=B(\tilde\Lambda).
\end{gather*}
\end{lemma}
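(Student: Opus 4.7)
The plan is to verify the four closure statements first by a single product calculation, then derive both identities $NM=B(\tilde\Lambda)$ and $MN=\KP_R(\tilde\Lambda)$ from those closures plus one structural observation about $\tilde\Lambda$.

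For the closures I would take the product of two spanning elements $t_\alpha t_{\beta^*}$ and $t_\lambda t_{\mu^*}$ and expand the middle factor $t_{\beta^*}t_\lambda$ using Proposition~\ref{prop:lspfamily} (equivalently Corollary~\ref{cor-altKP3'}): because $\tilde\Lambda$ has no sources, this sum is indexed by $\tilde\Lambda^{\min}(\beta,\lambda)$, so the product becomes $\sum_{(\sigma,\tau)} t_{\alpha\sigma}t_{(\mu\tau)^*}$. Since $r(\alpha\sigma)=r(\alpha)$ and $r(\mu\tau)=r(\mu)$, the ``outer ranges'' are inherited from the original factors, so each of the four closure claims reduces to matching which outer range is required to lie in $\iota(\Lambda^0)$.

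For $NM=B(\tilde\Lambda)$, the inclusion $\subseteq$ is exactly the product calculation above applied to $n=t_\alpha t_{\beta^*}\in N$ (so $r(\alpha)\in\iota(\Lambda^0)$) and $m=t_\lambda t_{\mu^*}\in M$ (so $r(\mu)\in\iota(\Lambda^0)$); both outer ranges land in $\iota(\Lambda^0)$. For the reverse inclusion, I would factor a generic spanning element $t_\alpha t_{\beta^*}\in B(\tilde\Lambda)$ as a single product: $t_\alpha = t_\alpha t_{s(\alpha)^*}$ lies in $N$ because $r(\alpha)\in\iota(\Lambda^0)$, and $t_{\beta^*} = t_{s(\beta)}t_{\beta^*}$ lies in $M$ because $r(\beta)\in\iota(\Lambda^0)$, and their product is $t_\alpha t_{\beta^*}$ by (KP2).

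For $MN=\KP_R(\tilde\Lambda)$, the crux is to show that every vertex idempotent $q_v$, $v\in\tilde\Lambda^0$, lies in $MN$. Writing $v=[x;m]$, I would take the path $\gamma:=[x;(0,m)]\in\tilde\Lambda$, which has $r(\gamma)=[x;0]=\iota(x(0))\in\iota(\Lambda^0)$ and $s(\gamma)=v$. Then (KP3) (available because $\tilde\Lambda$ has no sources) gives $t_{\gamma^*}t_\gamma=q_v$, and the same trick as in the previous paragraph rewrites $t_{\gamma^*}=t_{s(\gamma)}t_{\gamma^*}\in M$ and $t_\gamma=t_\gamma t_{s(\gamma)^*}\in N$, both valid because $r(\gamma)\in\iota(\Lambda^0)$. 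The closures already proved then imply that $MN$ is a two-sided ideal in $\KP_R(\tilde\Lambda)$, so any spanning element can be absorbed via
\[
t_\alpha t_{\beta^*}=q_{r(\alpha)}\,t_\alpha t_{\beta^*}\in MN\cdot\KP_R(\tilde\Lambda)\subseteq MN.
\]

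The main obstacle is the step $q_v\in MN$ for arbitrary $v\in\tilde\Lambda^0$. Unlike in the case of $NM=B(\tilde\Lambda)$, where both outer ranges already sit in $\iota(\Lambda^0)$, here one has to produce the path $\gamma$ connecting a vertex of $\iota(\Lambda^0)$ to $v$; the whole point of the desourcification construction is that $[x;(0,m)]$ is such a path, and once this is spotted the rest is bookkeeping with (KP2), (KP3) and Proposition~\ref{prop:lspfamily}.
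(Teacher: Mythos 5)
Your proposal is correct and follows essentially the same route as the paper: the closure statements and $NM=B(\tilde\Lambda)$ are proved by the same expansion via Proposition~\ref{prop:lspfamily} together with the spanning description of $B(\tilde\Lambda)$ from Proposition~\ref{lem:KPiotaL}, and the crux of $MN=\KP_R(\tilde\Lambda)$ is the same path $[x;(0,m)]$ with range $[x;0]\in\iota(\Lambda^0)$. The only (harmless) difference is organisational: the paper factors each spanning element $t_\alpha t_{\beta^*}$ directly by inserting $q_{s(\alpha)}=t_{\lambda^*}t_\lambda$ at the source, whereas you first show every vertex idempotent lies in $MN$ and then absorb general elements using that $MN$ is an ideal.
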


\begin{proof}  Let $t_\alpha t_{\beta^*}\in \KP_R(\tilde\Lambda)$, $t_{\lambda}t_{\mu^*}\in M$  and $t_\eta t_{\xi^*}\in B(\tilde\Lambda)$. Let $a=d(\beta)\vee d(\lambda)$ and $b=d(\mu)\vee d(\eta)$. By Proposition~\ref{prop:lspfamily},
\[
t_\alpha t_{\beta^*}t_{\lambda}t_{\mu^*}=\sum_{d(\beta\sigma)=a,\beta\sigma=\lambda\tau} t_{\alpha\sigma}t_{(\mu\tau)^*}\in M
\]
because $r(\mu\tau)=r(\mu)\in \iota(\Lambda^0)$. Similarly, 
\[
t_{\lambda}t_{\mu^*}t_\eta t_{\xi^*}=\sum_{d(\mu\sigma)=b,\mu\sigma=\eta\tau}t_{\lambda\sigma}t_{(\xi\tau)^*}\in M
\]
because $r(\xi\tau)=r(\xi)\in \iota(\Lambda^0)$. 
 Thus $M$ is closed under multiplication on the left by $\KP_R(\tilde\Lambda)$ and on the right by $B(\tilde\Lambda)$. The analogous assertion about $N$ follows from  Proposition~\ref{prop:lspfamily} in the same way.

Since $M, N\subseteq \KP_R(\tilde\Lambda)$, so is $MN$. For the other inclusion, consider $t_{\alpha}t_{\beta^*}\in \KP_R(\tilde\Lambda)$. We may assume that $t_{\alpha}t_{\beta^*}\neq 0$, and  then $s(\alpha)=s(\beta)$. First suppose that $s(\alpha)\in \iota(\Lambda^0)$. 
Then $t_{\alpha}t_{\beta^*}=t_\alpha q_{s(\alpha)} t_{\beta^*}=t_\alpha t_{s(\alpha)^*}t_{s(\alpha)} t_{\beta^*} \in MN$. 
Second, suppose that $s(\alpha) \notin \iota(\Lambda^0)$.  Then $s(\alpha)=[x;n]$ for some $n \in \N^k$
and $x \in \Lambda^{\leq \infty}$. Thus $\lambda:=[x;(0,n)] \in \tL$ and $r(\lambda) = [x;0]\in \iota(\Lambda^0)$. 
Then (KP3) gives 
\[
t_\alpha t_{\beta^*} = t_\alpha q_{s(\alpha)} t_{\beta^*}=t_\alpha t_{\lambda^*} t_\lambda t_{\beta^*} \in MN.
\]
Thus $MN=\KP_R(\tilde\Lambda)$.
                                                       
Next consider $n=t_{\gamma}t_{\delta^*}\in N$ and  $m=t_{\lambda}t_{\mu^*}\in M$. Let $c=d(\mu)\vee d(\gamma)$.  By Proposition~\ref{prop:lspfamily},
\[
nm=\sum_{d(\delta\sigma)=c,\delta\sigma=\lambda\tau} t_{\gamma\sigma}t_{(\mu\tau)^*}.
\]
In each summand, $r(\gamma\sigma)=r(\gamma)\in\iota(\Lambda^0)$ and $r(\mu\tau)=r(\mu)\in\iota(\Lambda^0)$, and hence each summand is in $B(\tilde\Lambda)$ by Proposition~\ref{lem:KPiotaL}. Thus $NM\subseteq B(\tilde\Lambda)$. For the other inclusion, consider $t_\eta t_{\xi^*}\in B(\tilde\Lambda)$. Then $t_\eta t_{\xi^*}=t_\eta q_{s(\eta)}q_{s(\xi)}t_{\xi^*}=t_\eta t_{s(\eta)}t_{s(\xi)}t_{\xi^*}\in NM$. Thus $NM=B(\tilde\Lambda)$.
\end{proof}

Now we recall the notion of a  Morita context from \cite[page~41]{GS}. Let $A$ and $B$ be rings, $M$ an $A$--$B$ bimodule, $N$ a $B$--$A$ bimodule, and 
\[
     \psi:M \otimes_B N \to A \text{ and }\phi:N \otimes_A M \to B
\]
 bimodule homomorphisms satisfying
\begin{equation}\label{eq-ops}n'\cdot \psi(m \otimes n) = \phi(n' \otimes m)\cdot n \text { and } m'\cdot \phi(n \otimes m) = \psi(
m' \otimes n)\cdot m\end{equation}  for  $n,n' \in N$ and $m,m' \in M$.
Then  $(A,B,M,N,\psi, \phi)$ is a \emph{Morita context} between $A$ and $B$; it is called \emph{surjective} if $\psi$ and $\phi$ are surjective.

\begin{thm}\label{thm:m_context}
Let $(\Lambda,d)$ be a locally convex, row-finite $k$-graph and let $(\tilde{\Lambda}, d)$ its
desourcification. Let $(q,t)$ be  a universal  $\KP_R(\tilde{\Lambda})$-family and  $B(\tilde\Lambda)$ be the subalgebra  of $\KP_R(\tilde{\Lambda})$ generated by $\{q_{\iota(v)}, t_{\iota(\lambda)}, t_{\iota(\mu^*)}:v\in\Lambda^0, \lambda,\mu\in\Lambda\}$. 
Let $M, N$ be as in Lemma~\ref{lem-helpMorita}. Then
\begin{enumerate}
\item\label{item-aa}
$M$ is a $\KP_R(\tL)$-$B(\tilde\Lambda)$ bimodule and $N$ is a $B(\tilde\Lambda)$-$\KP_R(\tL)$
bimodule with the module actions given by multiplication in $\KP_R(\tL)$;
\item\label{item-bb} there are surjective maps
$\psi: M \otimes_{B(\tilde\Lambda)} N \to \KP_R(\tilde{\Lambda})$ and $\phi: N \otimes_{\KP_R(\tilde{\Lambda})} M \to \KP_R(\Lambda)$ such that  
$\psi(m \otimes_{B(\tilde\Lambda)} n ) = mn$ and $\phi(n \otimes_{\KP_R(\tilde{\Lambda})} m ) = nm$ for $n\in N$ and $m\in M$; 
\item\label{item-cc} $(\KP_R(\tilde{\Lambda}), B(\tilde\Lambda), M, N, \psi, \phi)$ is a surjective Morita context between
$\KP_R(\tL)$ and $B(\tilde\Lambda)$.
\end{enumerate}
\end{thm}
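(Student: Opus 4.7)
\textbf{Proof plan for Theorem~\ref{thm:m_context}.}
The plan is to do essentially no new graph-theoretic work: all the content has been packaged into Lemma~\ref{lem-helpMorita} and Proposition~\ref{prop:Claire_Prop_11''}, and the rest is formal bookkeeping with tensor products and associativity in $\KP_R(\tilde\Lambda)$. I will treat the three parts in order, handing each one off to those prior results as quickly as possible.

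For~\eqref{item-aa}, I observe that $M$ and $N$ are defined as $R$-linear subspaces of the ring $\KP_R(\tilde\Lambda)$. Lemma~\ref{lem-helpMorita} already says that $M$ is closed under left multiplication by $\KP_R(\tilde\Lambda)$ and right multiplication by $B(\tilde\Lambda)$, and symmetrically for $N$, so the multiplication in $\KP_R(\tilde\Lambda)$ restricts to well-defined actions; the module axioms (associativity and unitality) are then inherited from $\KP_R(\tilde\Lambda)$ at no extra cost. The one thing I need to point out is that $B(\tilde\Lambda)$ has a ``local identity'' (sums of the $q_{\iota(v)}$) acting as $1$ on any given element, which is enough to make the bimodule structures unital in the sense needed for the tensor products over $B(\tilde\Lambda)$.

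For~\eqref{item-bb}, I will define $\psi$ and $\phi$ as the maps induced by multiplication in $\KP_R(\tilde\Lambda)$: the $R$-bilinear map $(m,n)\mapsto mn$ from $M\times N$ to $\KP_R(\tilde\Lambda)$ is $B(\tilde\Lambda)$-balanced by associativity, $(mb)n=m(bn)$, so it descends to the required $\psi:M\otimes_{B(\tilde\Lambda)}N\to \KP_R(\tilde\Lambda)$; an identical argument with the roles swapped produces $\phi:N\otimes_{\KP_R(\tilde\Lambda)}M\to B(\tilde\Lambda)$, which we then compose with the inverse of the graded isomorphism $\KP_R(\Lambda)\to B(\tilde\Lambda)$ of Proposition~\ref{prop:Claire_Prop_11''} to land in $\KP_R(\Lambda)$ as stated. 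Surjectivity of $\psi$ is the identity $MN=\KP_R(\tilde\Lambda)$ from Lemma~\ref{lem-helpMorita}, and surjectivity of $\phi$ is the identity $NM=B(\tilde\Lambda)$ together with the same proposition.

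For~\eqref{item-cc}, once \eqref{item-aa} and \eqref{item-bb} are established the compatibility conditions \eqref{eq-ops} are just associativity in $\KP_R(\tilde\Lambda)$: for $n,n'\in N$ and $m,m'\in M$, both $n'\cdot\psi(m\otimes n)=n'(mn)$ and $\phi(n'\otimes m)\cdot n=(n'm)n$ equal $n'mn$, and similarly on the other side. Combined with the surjectivity of $\psi$ and $\phi$ from~\eqref{item-bb}, this gives the surjective Morita context. I do not expect any single step to be a real obstacle; the only minor subtlety to flag in writing is that $\phi$ a priori lands in $B(\tilde\Lambda)$ rather than in $\KP_R(\Lambda)$, so I must invoke Proposition~\ref{prop:Claire_Prop_11''} explicitly to match the statement and to keep the identifications of the two algebras consistent throughout.
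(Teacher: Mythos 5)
Your proposal is correct and follows essentially the same route as the paper: part~\eqref{item-aa} and the surjectivity claims are exactly Lemma~\ref{lem-helpMorita}, the maps $\psi$ and $\phi$ are obtained by checking that multiplication is bilinear and balanced so that it descends to the tensor products, and the compatibility conditions \eqref{eq-ops} are associativity in $\KP_R(\tL)$. Your remark that $\phi$ naturally lands in $B(\tilde\Lambda)$ and only reaches $\KP_R(\Lambda)$ after composing with the isomorphism of Proposition~\ref{prop:Claire_Prop_11''} is a careful reading of a slight imprecision in the statement; the paper handles that identification the same way, in part~\eqref{item-cc} and Corollary~\ref{cor-Morita}.
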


Composing with the isomorphism $\pi_{q \circ \iota,t \circ \iota}:\KP_R(\Lambda)\to B(\tilde\Lambda)$ of Proposition~\ref{prop:Claire_Prop_11''} gives: 

\begin{cor}\label{cor-Morita} Let $(\Lambda,d)$ be a locally convex, row-finite $k$-graph and let $(\tilde{\Lambda}, d)$ be the 
desourcification of $\Lambda$. Then there is a surjective Morita context between $\KP_R(\tL)$ and $\KP_R(\Lambda)$.
\end{cor}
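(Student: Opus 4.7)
The plan is to transport the surjective Morita context $(\KP_R(\tilde\Lambda), B(\tilde\Lambda), M, N, \psi, \phi)$ from Theorem~\ref{thm:m_context} across the graded $R$-algebra isomorphism $\rho := \pi_{q\circ\iota,\, t\circ\iota}:\KP_R(\Lambda)\to B(\tilde\Lambda)$ supplied by Proposition~\ref{prop:Claire_Prop_11''}. The essential content is that an isomorphism on one side of a Morita context yields a Morita context with that side replaced; no further graph-theoretic input is required.

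First I would promote $M$ to a $\KP_R(\tilde\Lambda)$--$\KP_R(\Lambda)$ bimodule by keeping the left action intact and defining the new right action by $m \cdot a := m \cdot \rho(a)$ for $a \in \KP_R(\Lambda)$. Symmetrically, $N$ becomes a $\KP_R(\Lambda)$--$\KP_R(\tilde\Lambda)$ bimodule via $a \cdot n := \rho(a) \cdot n$. Because $\rho$ is a ring isomorphism, the bimodule axioms hold automatically, and the balancing $(m\rho(a)) \otimes n = m \otimes (\rho(a)n)$ over $B(\tilde\Lambda)$ is precisely the balancing over $\KP_R(\Lambda)$ in the twisted structure. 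This gives a canonical identification $M \otimes_{\KP_R(\Lambda)} N \cong M \otimes_{B(\tilde\Lambda)} N$; the tensor product $N \otimes_{\KP_R(\tilde\Lambda)} M$ is unchanged.

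Next I would keep $\tilde\psi := \psi$, which still maps into $\KP_R(\tilde\Lambda)$, and define $\tilde\phi := \rho^{-1} \circ \phi : N \otimes_{\KP_R(\tilde\Lambda)} M \to \KP_R(\Lambda)$. Both remain surjective because $\psi$ and $\phi$ are surjective by Theorem~\ref{thm:m_context} and $\rho^{-1}$ is a bijection. To verify the compatibility conditions \eqref{eq-ops}, I would compute for $n, n' \in N$ and $m \in M$ that
\[n' \cdot \tilde\psi(m \otimes n) = n' \cdot \psi(m \otimes n) = \phi(n' \otimes m) \cdot n = \rho(\tilde\phi(n' \otimes m)) \cdot n = \tilde\phi(n' \otimes m) \cdot n,\]
where the third equality is the original compatibility in Theorem~\ref{thm:m_context} and the last is the definition of the new left action of $\KP_R(\Lambda)$ on $N$. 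The symmetric identity $m' \cdot \tilde\phi(n \otimes m) = \tilde\psi(m' \otimes n) \cdot m$ follows in exactly the same manner.

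I do not anticipate any real obstacle here: the construction is routine transport of structure across a ring isomorphism, and all the substantive work, namely the construction of the bimodules $M$ and $N$, the maps $\psi$ and $\phi$, and the isomorphism $\rho$, has already been carried out in Theorem~\ref{thm:m_context} and Proposition~\ref{prop:Claire_Prop_11''}.
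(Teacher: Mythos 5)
Your proposal is correct and is exactly the paper's argument: the paper proves the corollary by composing the surjective Morita context of Theorem~\ref{thm:m_context} with the isomorphism $\pi_{q\circ\iota,\,t\circ\iota}:\KP_R(\Lambda)\to B(\tilde\Lambda)$ of Proposition~\ref{prop:Claire_Prop_11''}, which is precisely your transport of structure. You have merely written out the routine details (twisted module actions, $\tilde\phi=\rho^{-1}\circ\phi$, verification of \eqref{eq-ops}) that the paper leaves implicit.
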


\begin{proof}[Proof of Theorem~\ref{thm:m_context}]
Lemma~\ref{lem-helpMorita} gives \eqref{item-aa}. For \eqref{item-bb} we start by observing that the map $f:M\times N\to\KP_R(\tL)$ is bilinear, so that by the universal property of the tensor product there is a unique linear map $\psi_f:M\otimes N\to \KP_R(\tL)$ such that $\psi_f(m\otimes n)=mn$. The range of $\psi_f$ is $MN$, which is $\KP_R(\tL)$ by Lemma~\ref{lem-helpMorita}. To see that $\psi_f$ factors through the quotient map $q:M\otimes N\to  M \otimes_{B(\tilde\Lambda)} N$, we observe that for $x\in {B(\tilde\Lambda)}$ we have
$
\psi_f(m\cdot x\otimes n-m\otimes x\cdot n)=(mx)n-m(xn)=0.
$
Now there is a unique linear map $\psi:M \otimes_{B(\tilde\Lambda)} N \to \KP_R(\tilde{\Lambda})$ such that $\psi\circ q=\psi_f$. Thus $\psi$ is surjective and $\psi(m \otimes_{B(\tilde\Lambda)} n ) = mn$. That $\psi$ is a bimodule homomorphism follows because the actions are by multiplication.
The analogous assertions about $\phi$ follows in the same way. This gives \eqref{item-bb}.

For \eqref{item-cc} it remains to verify \eqref{eq-ops}, but this is immediate since everything is defined in terms of the associative multiplication in $\KP_R(\tL)$.
\end{proof}
  
Let $E$ be a row-finite directed graph, and $F$ the directed graph obtained from $E$ by adding ``infinite heads'' to sources, as in \cite[page~310]{BPRS}.  Then the path categories $\Lambda_E$ and $\Lambda_F$ are row-finite $1$-graphs; $E$ may have sources but is trivially locally convex.  
By \cite[Proposition~4.11]{W}, ${\tilde\Lambda}_E$ is isomorphic to $\Lambda_F$. 
The Leavitt path algebra $L_R(F)$ is the universal $R$-algebra generated by a Leavitt $F$-family \cite[Definition~3.1]{T}, and any Leavitt $F$-family gives a Kumjian-Pask $\Lambda_F$-family and vice versa.   Thus by Theorem~\ref{thm-KP}\eqref{item-a}, $L_R(F)$ and $\KP_R(\Lambda_F)$ are isomorphic. Similarly, $L_R(E)$ and $\KP_R(\Lambda_E)$ are isomorphic. Thus we obtain the following corollary which is the analogue of the $C^*$-algebraic result \cite[Lemma~1.2(c)]{BPRS}.

\begin{cor}
Let $E$ be a row-finite directed graph, and $F$ the directed graph obtained from $E$ by adding ``infinite heads'' to sources.  Then there is a surjective Morita context between the Leavitt path algebras $L_R(F)$ and $L_R(E)$.
\end{cor}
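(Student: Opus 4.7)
The plan is to deduce this corollary immediately from Corollary~\ref{cor-Morita} by recognising both Leavitt path algebras as Kumjian-Pask algebras of $1$-graphs, with $F$ playing the role of the desourcification of $E$. So the first step is to set up the identifications. View $E$ and $F$ as their path categories $\Lambda_E$ and $\Lambda_F$, which are row-finite $1$-graphs. Since $k=1$, the local convexity condition on page~\pageref{lc} is vacuous: it only constrains pairs of distinct coordinates $i\neq j$, of which there are none. Thus $\Lambda_E$ is automatically a locally convex, row-finite $k$-graph, so Corollary~\ref{cor-Morita} applies and yields a surjective Morita context between $\KP_R(\tilde\Lambda_E)$ and $\KP_R(\Lambda_E)$.

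Next I would invoke the two isomorphisms mentioned in the paragraph preceding the corollary. By \cite[Proposition~4.11]{W}, the desourcification $\tilde\Lambda_E$ is isomorphic (as a $k$-graph) to $\Lambda_F$; this is precisely the statement that Farthing--Webster's construction, in the $1$-graph case, reduces to attaching infinite heads to sources. Hence $\KP_R(\tilde\Lambda_E)\cong\KP_R(\Lambda_F)$. To identify the Kumjian-Pask algebras with Leavitt path algebras, I would observe that a Leavitt $F$-family in the sense of \cite[Definition~3.1]{T} is exactly the data of a Kumjian-Pask $\Lambda_F$-family, since for $1$-graphs the relations (KP1)--(KP4$'$) match the Leavitt relations verbatim (this is the $k=1$ specialisation explained in the introduction). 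Then the two universal properties, Theorem~\ref{thm-KP}\eqref{item-a} and \cite[Theorem~4.1]{T}, give mutually inverse homomorphisms, so $L_R(F)\cong\KP_R(\Lambda_F)$ and similarly $L_R(E)\cong\KP_R(\Lambda_E)$.

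Finally I would transport the Morita context along these ring isomorphisms. Concretely, if $\Phi_F:\KP_R(\Lambda_F)\to L_R(F)$ and $\Phi_E:\KP_R(\Lambda_E)\to L_R(E)$ are the isomorphisms together with the identification $\KP_R(\tilde\Lambda_E)\cong\KP_R(\Lambda_F)$, then the $\KP_R(\tilde\Lambda_E)$--$\KP_R(\Lambda_E)$ bimodules $M$ and $N$ and the pairings $\psi,\phi$ from Theorem~\ref{thm:m_context}, pushed forward via $\Phi_F$ and $\Phi_E$, yield $L_R(F)$--$L_R(E)$ bimodules and surjective pairings satisfying the associativity identities~\eqref{eq-ops}, giving the required surjective Morita context between $L_R(F)$ and $L_R(E)$. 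I do not foresee any obstacle: the content is entirely in Corollary~\ref{cor-Morita} and the routine identification of Leavitt families with Kumjian-Pask families for $1$-graphs; the only thing one has to note is that local convexity is automatic when $k=1$, so no hypothesis on $E$ beyond row-finiteness is needed.
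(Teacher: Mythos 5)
Your proposal is correct and follows essentially the same route as the paper: note that $\Lambda_E$ is trivially locally convex when $k=1$, identify $\tilde\Lambda_E$ with $\Lambda_F$ via \cite[Proposition~4.11]{W}, identify Leavitt families with Kumjian-Pask families of the $1$-graphs so that $L_R(E)\cong\KP_R(\Lambda_E)$ and $L_R(F)\cong\KP_R(\Lambda_F)$, and then transport the surjective Morita context of Corollary~\ref{cor-Morita}. The only cosmetic difference is that the paper phrases the Leavitt/Kumjian-Pask identification through Theorem~\ref{thm-KP}\eqref{item-a} and \cite[Definition~3.1]{T} rather than spelling out the relation-by-relation match, which amounts to the same thing.
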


There is a more general procedure, called ``desingularisation'', which takes a directed graph $E$ 
that is not necessarily row-finite and constructs a row-finite directed graph $F$ without sources such that $C^*(F)$ and $C^*(E)$ are Morita equivalent \cite[Theorem~2.11]{DT}.


\section{Simplicity and basic simplicity}\label{sec-simplicity}

Throughout this section, $\Lambda$ is a locally convex, row-finite $k$-graph,
$\tL$ is its desourcification and $R$ is a commutative ring with $1$.  
We show
that the Morita context between $\KP_R(\tL)$ and $\KP_R(\Lambda)$ of Corollary~\ref{cor-Morita}
preserves  \emph{basic} ideals (see below for the definition).   We then transfer simplicity 
results about $\KP_R(\tL)$ proved in \cite{ACaHR} to $\KP_R(\Lambda)$. 

A subset $H \subseteq \Lambda^0$ is called \emph{hereditary} if
for every $v \in H$ and $\lambda \in \Lambda$ with $r(\lambda)=v$
we have $s(\lambda) \in H$.   We say $H$ is  \emph{saturated} if for $v \in \Lambda^0$,
\[s(v\Lambda^{\leq e_i}) \subseteq H \text{ for some } i \in \{1, ..., k\} \implies
 v \in H.
\] See \cite[page~113]{RSY03}.
In this section, we will apply these definitions to $\tL$ which has no 
sources; then the  definition of `saturated' above is equivalent to: or $v \in \Lambda^0$,
\[s(v\tL^{n}) \subseteq H \text{ for some } n \in \N^k \implies
 v \in H.
\] 
\begin{lemma}
\label{lem:sat_pi}
Let $\Lambda$ be a locally convex, row-finite $k$-graph and $\tL$ 
its desourcification.  Suppose that $H$ is a saturated and hereditary subset of $\tL^0$.  Then
$v \in H$ if and only if $\pi(v) \in H$.  
\end{lemma}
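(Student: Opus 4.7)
The plan is to set $v = [x;m]$ and $p := m \wedge d(x)$, so that $\pi(v) = [x;p]$, and to use the path $\lambda := [x;(p,m)] \in \tL$, which satisfies $r(\lambda) = \pi(v)$ and $s(\lambda) = v$. The direction ``$\pi(v) \in H \Rightarrow v \in H$'' then falls out immediately from heredity applied to $\lambda$.

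For the converse, set $n := m - p$. If $n = 0$ then $\pi(v) = v$ and we are done, so assume $n \neq 0$. I plan to apply the saturation property at $\pi(v)$ with this $n$: I will show that $s(\mu) = v$ for \emph{every} $\mu \in \pi(v)\tL^n$, so that $s(\pi(v)\tL^n) = \{v\} \subseteq H$ (nonempty since $\tL$ has no sources), whence $\pi(v) \in H$ by saturation. To identify $s(\mu)$, first use that $\pi(v) \in \iota(\Lambda^0)$ and apply Lemma~\ref{lem:0dlambda} to write $\mu = [y;(0,n)]$ for some $y \in \Lambda^{\leq\infty}$; comparing $r(\mu) = [y;0]$ with $\pi(v) = [x;p]$ via (V1) forces $y(0) = x(p)$. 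Then $s(\mu) = [y;n]$, and it remains to verify $[y;n] = [x;m]$.

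The main obstacle is showing that $n \wedge d(y) = 0$, which reduces the (V1)--(V2) check to the already-established $y(0) = x(p)$ and $n - 0 = m - p$. Coordinate-wise: for each $i$ with $n_i > 0$ one has $m_i > d(x)_i$, so $d(x)_i$ is finite and $p_i = d(x)_i$; then $p\Omega_{k,d(x)}^{\leq e_i} = \{p\}$, and the boundary path condition \eqref{eq:boundarypath} applied to $x$ yields $x(p)\Lambda^{\leq e_i} = \{x(p)\}$, i.e., $x(p)\Lambda^{e_i} = \emptyset$. Since $y$ is a graph morphism with $y(0) = x(p)$, having $d(y)_i \geq 1$ would produce an $e_i$-edge of $\Lambda$ ranging at $x(p)$, a contradiction; so $d(y)_i = 0$. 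Hence $(n \wedge d(y))_i = 0$ in every coordinate, completing the identification $s(\mu) = v$ and thereby the saturation argument.
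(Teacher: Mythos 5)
Your proof is correct and follows essentially the same route as the paper: heredity along the canonical path $\lambda=[x;(p,m)]$ gives one direction, and saturation at $\pi(v)$ with $n=m-p$ gives the other, once $s(\pi(v)\tL^{n})=\{v\}$ is known. The only difference is that where the paper invokes Lemma~\ref{lem:Claire_13_14} to see $\pi(v)\tL^{n}=\{\lambda\}$, you re-derive the needed (slightly weaker) fact directly from Lemma~\ref{lem:0dlambda} and the boundary-path condition \eqref{eq:boundarypath}, which is a sound inlining of the same argument.
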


\begin{proof}
Let $v \in \tL^0$.  Write  
$v = [z;m]$ where $z \in \Lambda^{\leq \infty}$ and
$m \in \N^k$.  Then $\pi(v)=[z;d(z) \wedge m]$. 
Notice that $\lambda := [z; (d(z),m \wedge d(z))] \in \tL$ has source $v$ and range $\pi(v)$
and  \[\pi(v)\tL^{m-m\wedge d(z)} = \{\lambda\}\] by
Lemma~\ref{lem:Claire_13_14}.

Now suppose $v \in H$.    Since \[
                  s\big(\pi(v)\tL^{m-m\wedge d(z)}\big) = \{s(\lambda)\} = \{v\} \subseteq H,
                 \]
and $H$ is saturated, $\pi(v) \in H$ as well.
Conversely, suppose $\pi(v) \in H$.  Then $r(\lambda) = \pi(v)\in H$
and  $H$  hereditary implies $v=s(\lambda) \in H$.
\end{proof}

Following \cite{T}, we say an ideal $I$ in $\KP_R(\Lambda)$ is a \emph{basic ideal} if  $rp_v \in I$ for some $v\in\Lambda^0$ and $r \in R\setminus\{0\}$   
imply $p_v \in I$.
We say $\KP_R(\Lambda)$ is \emph{basically simple} if its only basic ideals are $\{0\}$ and $\KP_R(\Lambda)$.

The Morita context of Theorem~\ref{thm:m_context} induces a lattice isomorphism $L$ between the ideals
of $\KP_R(\tL)$ and ideals of $B(\tL)$  such that $L(I)=NIM$ \cite[Proposition~3.5]{GS}.  

\begin{prop}
\label{prop:basicideals} Let $(\KP_R(\tL),B(\tL),M,N)$ be the Morita context 
 of Theorem~\ref{thm:m_context} and $L:I\mapsto NIM$ be 
be the induced lattice isomorphism from the ideals of $\KP_R(\tL)$ to the ideals of 
$B(\tilde\Lambda)$.  
Then $I$ is a basic ideal in $\KP_R(\tL)$ if and only if $L(I)$ is a basic ideal in $B(\tL)$.
\end{prop}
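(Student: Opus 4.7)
The plan is to exploit the description of the vertex idempotents on each side together with the explicit form $L(I)=NIM$. Via the isomorphism $\pi_{q\circ\iota,t\circ\iota}\colon\KP_R(\Lambda)\to B(\tL)$ of Proposition~\ref{prop:Claire_Prop_11''}, the vertex idempotents of $B(\tL)$ are exactly $\{q_{\iota(v)}:v\in\Lambda^0\}$, so an ideal $J$ of $B(\tL)$ is basic precisely when $rq_{\iota(v)}\in J$ (with $r\in R\setminus\{0\}$) forces $q_{\iota(v)}\in J$. A ubiquitous observation in both directions is that, because $r(\iota(v))=\iota(v)\in\iota(\Lambda^0)$, the idempotent $q_{\iota(v)}$ lies in both $N$ and $M$, so given any element $a\in I\cap B(\tL)$ of the form $a=q_{\iota(v)}aq_{\iota(v)}$ we may factor $a=q_{\iota(v)}\cdot a\cdot q_{\iota(v)}$ as (element of $N$)(element of $I$)(element of $M$), placing $a$ in $NIM=L(I)$.

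For the forward direction I assume $I$ is basic in $\KP_R(\tL)$ and take $rq_{\iota(v)}\in L(I)$. Since $N,I,M\subseteq \KP_R(\tL)$ and $I$ is an ideal, $L(I)=NIM\subseteq I$, so $rq_{\iota(v)}\in I$ and basicness of $I$ forces $q_{\iota(v)}\in I$. The factorisation trick above then gives $q_{\iota(v)}\in L(I)$ directly, and $L(I)$ is basic.

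For the reverse direction I assume $L(I)$ is basic in $B(\tL)$ and take $rq_w\in I$ for some $w\in\tL^0$. The technical work is to push the hypothesis down to a vertex in $\iota(\Lambda^0)$. Writing $w=[z;m]$, I consider $\pi(w)=[z;m\wedge d(z)]\in\iota(\Lambda^0)$ and the path $\lambda:=[z;(m\wedge d(z),m)]$ of degree $m-m\wedge d(z)$ with $r(\lambda)=\pi(w)$ and $s(\lambda)=w$. By Lemma~\ref{lem:Claire_13_14} we have $\pi(w)\tL^{d(\lambda)}=\{\lambda\}$, and so (KP4) collapses to $q_{\pi(w)}=t_\lambda t_{\lambda^*}$, while (KP3$'$) gives $t_{\lambda^*}t_\lambda=q_w$. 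Therefore $rq_{\pi(w)}=t_\lambda(rq_w)t_{\lambda^*}\in I$, and the same idempotent factorisation puts $rq_{\pi(w)}$ into $L(I)$. Basicness of $L(I)$ now yields $q_{\pi(w)}\in L(I)\subseteq I$, and then $q_w=t_{\lambda^*}q_{\pi(w)}t_\lambda\in I$, as required.

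The main obstacle I expect is psychological rather than computational: one must resist the temptation to identify $L(I)$ with $I\cap B(\tL)$ (which is a priori only a containment) and instead use the idempotent factorisation $q_{\iota(v)}\cdot(\,\cdot\,)\cdot q_{\iota(v)}$ to move elements between $I$ and $L(I)$. The second delicate point is that the vertex idempotents of $B(\tL)$ are indexed by $\Lambda^0$ rather than by $\tL^0$, so the reverse direction is not completely symmetric with the forward one; bridging this asymmetry is precisely what the canonical path $[z;(m\wedge d(z),m)]$ accomplishes via Lemma~\ref{lem:Claire_13_14}.
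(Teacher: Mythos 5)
Your proof is correct. The forward direction coincides with the paper's argument ($L(I)=NIM\subseteq I$, basicness of $I$, then the factorisation $q_{\iota(v)}=q_{\iota(v)}q_{\iota(v)}q_{\iota(v)}\in NIM$). In the reverse direction you take a genuinely different route: the paper transfers the hypothesis from $w$ to $\pi(w)$ and back by observing that $H_{I,r}=\{v\in\tL^0: rq_v\in I\}$ is hereditary and saturated (citing \cite[Lemma~5.2]{ACaHR}) and then invoking Lemma~\ref{lem:sat_pi}, whereas you conjugate directly by the canonical path $\lambda=[z;(m\wedge d(z),m)]$, using Lemma~\ref{lem:Claire_13_14} together with (KP4) and (KP3) to get $q_{\pi(w)}=t_\lambda t_{\lambda^*}$ and $t_{\lambda^*}t_\lambda=q_w$, hence $rq_{\pi(w)}=t_\lambda(rq_w)t_{\lambda^*}\in I$ and, at the end, $q_w=t_{\lambda^*}q_{\pi(w)}t_\lambda\in I$. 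This effectively inlines, at the level of idempotents, the same graph-theoretic content that Lemma~\ref{lem:sat_pi} packages at the level of vertex sets; what it buys you is a self-contained argument avoiding the external reference \cite[Lemma~5.2]{ACaHR}, while the paper's route reuses machinery that is needed again in \S\ref{sec-ideals}. Your use of $L(I)\subseteq I$ to conclude $q_{\pi(w)}\in I$ is also a mild simplification of the paper's step $ML(I)N=I$. The only point you should state explicitly is the degenerate case $m\leq d(z)$: there $d(\lambda)=0$, so (KP4) (which requires $n\neq 0$) is not literally available, but then $w=\pi(w)$ and the transfer step is vacuous, so nothing is lost.
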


\begin{proof} 
Suppose $I$ is a  basic ideal of
$\KP_R(\tL)$.   Fix $v \in \iota(\Lambda^0)$ and nonzero $r \in R$ such that $rp_v \in L(I)$.
Since
$N$ and $M$ are subsets of $\KP_R(\tL)$ and $I$ is an ideal of $\KP_R(\tL)$, we have $L(I)=NIM \subseteq I$.  Thus
$rp_v \in I$ and hence $p_v \in I$ because $I$ is a basic ideal.
Since $v \in \iota(\Lambda^0)$, we have  $p_v \in M \cap N$.  Now
\[
 p_v=p_vp_vp_v \in NIM = L(I).
\]
Thus $L(I)$ is a basic ideal of $B(\tL)$. 

Conversely, let $I$ be an ideal in $\KP_R(\tL)$ such that  $L(I)$ is a basic ideal of $B(\tL)$.  We will
show $I$ is a basic ideal.
Fix $v \in \tL^0$ and nonzero $r \in R$ such that $rp_v \in I$.
  Then $v$ is an element of 
\[H_{I,r} := \{v \in \tL^0 : rp_v \in I\},\]
which is saturated and hereditary by \cite[Lemma~5.2]{ACaHR}.  Hence $\pi(v) \in H_{I,r}$ by Lemma~\ref{lem:sat_pi} which means $rp_{\pi(v)} \in I$.
Notice that $\pi(v) \in \iota(\Lambda^0)$ and so $p_{\pi(v)} \in M \cap N$.   Then
\[rp_{\pi(v)} = p_{\pi(v)}rp_{\pi(v)}p_{\pi(v)} \in NIM = L(I).
\]
Therefore $p_{\pi(v)} \in L(I)$ because $L(I)$ is basic.
Thus we have \[p_{\pi(v)} = p_{\pi(v)}p_{\pi(v)}p_{\pi(v)}  \in ML(I)N = I.\]
Now  $\pi(v) \in H_{I,1}$ which is  saturated and hereditary by \cite[Lemma~5.2]{ACaHR}.
Therefore $v \in H_{I,1}$ by Lemma~\ref{lem:sat_pi}; that is $p_v \in I$.
\end{proof}

Combining Proposition~\ref{prop:basicideals} with the isomorphism of
$\KP_R(\Lambda)$ onto
$B(\tL)$ of Proposition~\ref{prop:Claire_Prop_11''}, the
proof of the following corollary is immediate.

\begin{cor}
\label{cor:basicsimple}
Let $\Lambda$ be a locally convex, row-finite $k$-graph and $\tL$ its 
desourcification.  Then $\KP_R(\Lambda)$ is basically simple if and only
if $\KP_R(\tL)$ is basically simple.
\end{cor}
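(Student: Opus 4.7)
The plan is to chain two bijections between ideal lattices: the graded $R$-algebra isomorphism $\phi:\KP_R(\Lambda)\to B(\tL)$ supplied by Proposition~\ref{prop:Claire_Prop_11''}, and the lattice isomorphism $L:I\mapsto NIM$ from ideals of $\KP_R(\tL)$ to ideals of $B(\tL)$ supplied by Proposition~\ref{prop:basicideals}. Each of these sends the zero ideal to the zero ideal and the full algebra to the full algebra, and each preserves the basic-ideal property; composing them then yields the desired transfer of basic simplicity.

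More concretely, I would first note that $\phi$ is built from the universal property so that $\phi(p_v)=q_{\iota(v)}$ for every $v\in\Lambda^0$, and that the image $B(\tL)$ is generated by $\{q_{\iota(v)}:v\in\Lambda^0\}\cup\{t_{\iota(\lambda)},t_{\iota(\lambda)^*}:\lambda\in\Lambda\}$, so its distinguished vertex idempotents are precisely the $q_{\iota(v)}$ with $v\in\Lambda^0$. Since $\phi$ is an isomorphism, $J\mapsto\phi(J)$ is a lattice isomorphism between the ideal lattices of $\KP_R(\Lambda)$ and $B(\tL)$. Because $\phi(rp_v)=rq_{\iota(v)}$ and $\phi$ is injective, one has $rp_v\in J$ iff $rq_{\iota(v)}\in\phi(J)$, and $p_v\in J$ iff $q_{\iota(v)}\in\phi(J)$. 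Hence $J$ is a basic ideal of $\KP_R(\Lambda)$ exactly when $\phi(J)$ is basic in $B(\tL)$ in the sense used in Proposition~\ref{prop:basicideals}, and that proposition in turn says that $L$ restricts to a bijection between basic ideals of $\KP_R(\tL)$ and basic ideals of $B(\tL)$.

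Composing, $J\mapsto L^{-1}(\phi(J))$ is a lattice isomorphism from ideals of $\KP_R(\Lambda)$ to ideals of $\KP_R(\tL)$ that carries basic ideals to basic ideals in both directions. Any lattice isomorphism preserves the minimum and the maximum, so $\{0\}$ and the whole algebra correspond, and we conclude that the only basic ideals of $\KP_R(\Lambda)$ are $\{0\}$ and $\KP_R(\Lambda)$ if and only if the only basic ideals of $\KP_R(\tL)$ are $\{0\}$ and $\KP_R(\tL)$. No real obstacle arises: the substantive work is entirely absorbed into Propositions~\ref{prop:Claire_Prop_11''} and~\ref{prop:basicideals}, so the only thing to check is that the vertex idempotents are matched up correctly under $\phi$, which is immediate from its defining formula.
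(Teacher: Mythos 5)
Your proposal is correct and is exactly the paper's argument: the paper proves this corollary by combining Proposition~\ref{prop:basicideals} with the isomorphism of $\KP_R(\Lambda)$ onto $B(\tL)$ from Proposition~\ref{prop:Claire_Prop_11''}, which is precisely your chain of lattice isomorphisms preserving basic ideals. Your extra remark that basicness in $B(\tL)$ is measured against the idempotents $q_{\iota(v)}$ just makes explicit what the paper leaves implicit.
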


Next we want to transfer results about $\tL$ to $\Lambda$.  To use results 
already in the literature we need to reconcile some (of the many) aperiodicity conditions that have been used.

\begin{lemma}\label{lem-pain} Let $\Lambda$ be a  row-finite $k$-graph.\begin{enumerate} 
\item\label{paina} Suppose that $\Lambda$ has no sources. The following aperiodicity conditions  are equivalent:
\begin{enumerate}

\item\label{itemii} \textup{(``Condition B'' from \cite[Theorem~4.3]{RSY03} for graphs without sources; our \eqref{eq:aperiodic} reduces to this)}\\
For every $v\in\Lambda^0$ there exists $x\in v\Lambda^\infty$ such that $\mu\neq \nu\in \Lambda$ implies $\mu x\neq \nu x$.
\item \textup{(The ``no local periodicity condition'' from \cite[Lemma~3.2(iii)]{RS2007})}\\
For every $v\in \Lambda^0$ and all $m\neq n\in\N^k$, there exists $x\in v\Lambda^\infty$ such that $\sigma^m(x)\neq\sigma^n(x)$.

\item\label{itemiii} \textup{(The finite-path reformulation from \cite[Lemma~3.2(iv)]{RS2007}; used in \cite{ACaHR})}\\ For every $v\in \Lambda^0$ and $m\neq n\in {\mathbb N}^k$ there exists $\lambda\in v\Lambda$ such that
$d(\lambda)\geq m\vee n$ and 
$
\lambda(m,m+d(\lambda)-(m\vee n))\neq \lambda(n,n+d(\lambda)-(m\vee n))
$.
\end{enumerate}
\item\label{painb}
Suppose that $\Lambda$ is locally convex. The following aperiodicity conditions on $\Lambda$ are equivalent:
\begin{enumerate}
\item \textup{(``Condition B''  from \cite[Theorem~4.3]{RSY03}, this is our \eqref{eq:aperiodic})}\\
For every $v \in \Lambda^0$, there exists  $x \in v\Lambda^{\leq \infty}$
such that $\alpha \neq \beta\in\Lambda$  implies $\alpha x \neq \beta x$.
\item \textup{(The ``no local periodicity condition'' from \cite[Definition~3.2]{RS2009})}\\
For every $v\in\Lambda^0$ and all $m\neq n\in\N^k$, there exists $x\in\Lambda^{\leq \infty}$ such that either $m-m\wedge d(x)\neq n-n\wedge d(x)$ or $\sigma^{m\wedge d(x)}(x)\neq \sigma^{n\wedge d(x)}(x)$. 
\end{enumerate}
\end{enumerate}
\end{lemma}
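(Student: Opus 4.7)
The plan is to establish both parts by invoking equivalences already in the literature, with a short direct verification for the locally convex case of part~(b).

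For part~(a), conditions (i), (ii), (iii) correspond, respectively, to Condition~B of \cite[Theorem~4.3]{RSY03}, the no-local-periodicity condition \cite[Lemma~3.2(iii)]{RS2007}, and the finite-path reformulation \cite[Lemma~3.2(iv)]{RS2007}. Since \cite[Lemma~3.2]{RS2007} proves that all of these are mutually equivalent for a row-finite $k$-graph without sources, part~(a) is immediate.

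For part~(b), conditions (i) and (ii) are Condition~B of \cite[Theorem~4.3]{RSY03} reformulated for boundary paths (this is what \eqref{eq:aperiodic} is) and the no-local-periodicity condition \cite[Definition~3.2]{RS2009}, respectively. Their equivalence for locally convex $\Lambda$ is established in \cite{RS2009} by mirroring the no-sources argument. The nontrivial implication (i)$\Rightarrow$(ii) proceeds by contraposition and crucially uses (i) at a vertex other than $v$: suppose both disjuncts of (ii) fail at $(v,m,n)$, so every $x\in v\Lambda^{\leq\infty}$ satisfies $m-m\wedge d(x)=n-n\wedge d(x)$ and $\sigma^{m\wedge d(x)}(x)=\sigma^{n\wedge d(x)}(x)$. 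Taking $\alpha\in v\Lambda^m$ (assuming $m\leq n$ for concreteness; the incomparable case requires additional case analysis) and any $z\in s(\alpha)\Lambda^{\leq\infty}$, the path $x:=\alpha z$ belongs to $v\Lambda^{\leq\infty}$, and unpacking $\sigma^m(x)=\sigma^n(x)$ translates to a nontrivial periodicity $\beta z=z$ for some loop $\beta\in s(\alpha)\Lambda^{n-m}$. Since this holds for every $z\in s(\alpha)\Lambda^{\leq\infty}$, applying (i) at $s(\alpha)$ produces an aperiodic path there that is nonetheless forced to satisfy $\beta z=z$, yielding the distinct finite paths $\mu=s(\alpha)$ and $\nu=\beta$ with $\mu z=\nu z$, contradicting aperiodicity. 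The converse (ii)$\Rightarrow$(i) is a standard diagonal construction: enumerate pairs $(\mu,\nu)$ of distinct finite paths with $s(\mu)=s(\nu)=v$ and successively extend an initial segment using (ii) at the appropriate vertex, with row-finiteness and local convexity ensuring the limit is a genuine element of $v\Lambda^{\leq\infty}$.

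The main obstacle is the higher-rank combinatorics in part~(b). On boundary paths, $\sigma^m$ is only partially defined, which is precisely why (ii) is formulated disjunctively: either the residual degrees $m-m\wedge d(x)$ and $n-n\wedge d(x)$ disagree (one shift runs off the end of $x$ in some coordinate while the other does not), or the shifted paths themselves differ. Converting this into clean periodicity of $z=\sigma^m(\alpha z)$ requires case analysis on the comparability of $m$ and $n$ and on whether $m,n\leq d(\alpha z)$ in each coordinate; moreover, the diagonal construction in (ii)$\Rightarrow$(i) must verify the boundary-path condition \eqref{eq:boundarypath} at each stage so that the limit is a boundary path rather than merely an infinite path in $\Omega_{k,m}$.
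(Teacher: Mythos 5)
Part~(a) is fine and matches the paper exactly: the paper's entire proof of this lemma is a citation (it refers the reader to \cite[Lemma~3.2]{RS2009} for the no-sources equivalences), and your appeal to \cite[Lemma~3.2]{RS2007} covers the same ground.

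Part~(b) is where there is a genuine gap. The paper does not prove the locally convex equivalence either by hand or via \cite{RS2009}; it cites \cite[Proposition~2.11]{Shotwell}. Your attribution to \cite{RS2009} is not supported: that paper introduces the no-local-periodicity condition (Definition~3.2) and shows it is preserved under desourcification, but it does not establish its equivalence with Condition~B of \cite[Theorem~4.3]{RSY03} for locally convex graphs --- which is exactly why the present paper has to invoke Shotwell. Your fallback direct argument does not close this hole, because the steps you defer are the actual content of the proof: (1) in the implication (i)$\Rightarrow$(ii) you ``take $\alpha\in v\Lambda^m$'', but when $\Lambda$ has sources $v\Lambda^m$ may be empty, so one must work with $v\Lambda^{\leq m}$ and interact with the first disjunct $m-m\wedge d(x)\neq n-n\wedge d(x)$; (2) the case of incomparable $m,n$ is explicitly left out; (3) in (ii)$\Rightarrow$(i) the ``standard diagonal construction'' must produce an element of $v\Lambda^{\leq\infty}$, i.e.\ the limit has to satisfy the boundary-path condition \eqref{eq:boundarypath}, and you do not say how local convexity guarantees this at each stage. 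As written, part~(b) is neither correctly reduced to the literature nor proved; either cite \cite[Proposition~2.11]{Shotwell} as the paper does, or carry out the deferred case analysis and the boundary-path verification in full.
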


\begin{proof} For the equivalences in \eqref{paina}, see \cite[Lemma~3.2]{RS2009}, and for the equivalences in \eqref{painb}, see
\cite[Proposition~2.11]{Shotwell}. 
\end{proof}

A locally convex, row-finite graph $k$-graph $\Lambda$ is said to be \emph{cofinal} in \cite[Definition~3.1]{RS2009}
if for every $x\in\Lambda^{\leq\infty}$ and every $v\in \Lambda^0$, there exists $n\in\N^k$  such that $n\leq d(x)$ and
 $v\Lambda x(n)\neq \emptyset$. When $\Lambda$ has no sources, the cofinality condition reduces 
to the one given in section~\ref{sec_cofinal}.

\begin{thm}
 Let $\Lambda$ be a locally convex, row-finite  $k$-graph and $R$ be a commutative ring with $1$.  Then
\begin{enumerate}
\item\label{item-bs} $\KP_R(\Lambda)$ is basically simple if and only if  $\Lambda$ is cofinal and aperiodic;

\item\label{item-s} $\KP_R(\Lambda)$ is simple if and only if  $\Lambda$ is cofinal and aperiodic, and $R$ is a field.
\end{enumerate}
\end{thm}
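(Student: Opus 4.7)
The plan is to transfer the corresponding results for $\tL$ from \cite{ACaHR} to $\Lambda$ using the Morita equivalence of Corollary~\ref{cor-Morita}, together with a purely graph-theoretic translation between $\Lambda$ and $\tL$.

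For~\eqref{item-bs}, Corollary~\ref{cor:basicsimple} gives that $\KP_R(\Lambda)$ is basically simple if and only if $\KP_R(\tL)$ is. Since $\tL$ is row-finite without sources, the corresponding basic-simplicity theorem from \cite{ACaHR} asserts that $\KP_R(\tL)$ is basically simple if and only if $\tL$ is cofinal and aperiodic. So it remains to establish the purely graph-theoretic equivalence that $\Lambda$ is cofinal and aperiodic if and only if $\tL$ is. I would prove this by exploiting the maps $\iota : \Lambda \to \tL$ and $\pi : \tL \to \iota(\Lambda)$ from Remark~\ref{rmk-structurelambda0}: the assignment $x \mapsto [x;0]$ induces a bijection from $v\Lambda^{\leq\infty}$ onto $\iota(v)\tL^\infty$, and every vertex of $\tL$ sits beneath some vertex in $\iota(\Lambda^0)$ via $\pi$, as in the proof of Lemma~\ref{lem:sat_pi}. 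Using these together with the finite-path reformulation in Lemma~\ref{lem-pain}, both cofinality and aperiodicity transfer in both directions between $\Lambda$ and $\tL$.

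For~\eqref{item-s}, simplicity a fortiori implies basic simplicity, so~\eqref{item-bs} forces $\Lambda$ to be cofinal and aperiodic; and $R$ must be a field, for if $J$ were a nonzero proper ideal of $R$ then the ideal of $\KP_R(\Lambda)$ generated by $\{rp_v : r \in J,\ v \in \Lambda^0\}$ would be nonzero by Theorem~\ref{thm-KP}\eqref{item-a} and proper because the quotient map $\KP_R(\Lambda) \to \KP_{R/J}(\Lambda)$ is nonzero but kills this ideal. Conversely, suppose $\Lambda$ is cofinal and aperiodic and $R$ is a field. By~\eqref{item-bs}, $\KP_R(\Lambda)$ is basically simple; but over a field every ideal $I$ of $\KP_R(\Lambda)$ is automatically basic, since $rp_v \in I$ with $r \neq 0$ implies $p_v = r^{-1}(rp_v) \in I$. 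Hence $\KP_R(\Lambda)$ is simple.

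The principal obstacle is the graph-theoretic equivalence between cofinality and aperiodicity for $\Lambda$ and $\tL$, especially the aperiodicity direction: the two conditions are phrased using different path spaces ($\Lambda^{\leq\infty}$ versus $\tL^\infty$), so care is needed to verify that distinguishing witnesses descend through $\pi$ from $\tL$ to $\Lambda$ and lift through $\iota$ from $\Lambda$ to $\tL$.
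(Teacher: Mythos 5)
Your proposal is correct, and it diverges from the paper in two ways worth noting. For part (a) the skeleton is the same as the paper's: Corollary~\ref{cor:basicsimple} reduces the question to $\KP_R(\tL)$, and then the basic-simplicity theorem of \cite{ACaHR} for row-finite graphs without sources applies. But where you propose to prove by hand that cofinality and aperiodicity pass between $\Lambda$ and $\tL$ via $\iota$ and $\pi$, the paper simply cites \cite[Propositions~3.5 and~3.6]{RS2009} for precisely these two equivalences, using Lemma~\ref{lem-pain} to reconcile the various formulations of aperiodicity. Be aware that this transfer is where the real work sits: your claimed correspondence between $v\Lambda^{\leq\infty}$ and $\iota(v)\tL^\infty$ (more precisely $x\mapsto\bigl(n\mapsto [x;(0,n)]\bigr)$, since $[x;0]$ is a vertex, not a path) is true, but establishing it and then pushing periodicity witnesses back and forth is essentially the content of those propositions, so carrying out your sketch amounts to reproving them rather than quoting them. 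For part (b) your route is genuinely different: the paper invokes the Morita context of Corollary~\ref{cor-Morita} (so that simplicity transfers between $\KP_R(\Lambda)$ and $\KP_R(\tL)$) and then quotes \cite[Theorem~6.1]{ACaHR}; you instead deduce (b) from (a) directly, showing $R$ must be a field via the nonzero $R$-algebra homomorphism onto $\KP_{R/J}(\Lambda)$ (nonzero by Theorem~\ref{thm-KP}\eqref{item-a} applied over $R/J$, and annihilating the ideal generated by $\{rp_v: r\in J,\ v\in\Lambda^0\}$), and conversely observing that over a field every ideal is basic, so basic simplicity upgrades to simplicity. That argument is valid and arguably more self-contained, since it avoids both the Morita machinery and \cite[Theorem~6.1]{ACaHR} for this part, at the cost of redoing the standard ring-theoretic step that \cite{ACaHR} packages into its simplicity theorem.
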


\begin{proof}  \eqref{item-bs}
By Corollary~\ref{cor:basicsimple}, $\KP_R(\Lambda)$ is basically simple if and only if $\KP_R(\tL)$ is basically simple. 
 Since $\tL$ is row-finite with no sources, by \cite[Theorem~5.14]{ACaHR}, $\KP_R(\tL)$ is basically simple if and only if $\tL$ is cofinal and aperiodic in the sense of condition \eqref{itemiii} of Lemma~\ref{lem-pain}.

By \cite[Proposition~3.5]{RS2009}, $\tL$ is cofinal if and only if $\Lambda$ is cofinal. By \cite[Proposition~3.6]{RS2009}, $\tL$ has
 no local periodicity  if and only if $\Lambda$ has no local periodicity.
Using   Lemma~\ref{lem-pain} it follows that $\KP_R(\Lambda)$ is basically simple if and only if $\Lambda$ is cofinal and aperiodic 
(in the sense of the equivalent conditions of Lemma~\ref{lem-pain}\eqref{painb}).

\eqref{item-s} By Corollary~\ref{cor-Morita}, $\KP_R(\Lambda)$ is Morita equivalent to  $\KP_R(\tL)$. Thus $\KP_R(\Lambda)$ is simple if and only if $\KP_R(\tL)$ is. By \cite[Theorem~6.1]{ACaHR}, $\KP_R(\tL)$ is simple if and only if $R$ is a field and $\tL$ is cofinal and aperiodic.  The same shenanigans as in \eqref{item-bs} now give that $\KP_R(\Lambda)$ is simple if and only if $R$ is a field and $\Lambda$ is cofinal and aperiodic.
\end{proof}


\section{The ideal structure}\label{sec-ideals}
Throughout this section $\Lambda$ is a locally convex, row-finite $k$-graph, and  $\tL$ is its desourcification. Recall that $B(\tilde\Lambda)$ is the subalgebra  of $\KP_R(\tilde{\Lambda})$ generated by $\{q_{\iota(v)}, t_{\iota(\lambda)}, t_{\iota(\mu)}:v\in\Lambda^0, \lambda,\mu\in\Lambda\}$, and is canonically isomorphic to $\KP_R(\Lambda)$ by Proposition~\ref{prop:Claire_Prop_11''}.  We now use the Morita context between $\KP_R(\tL)$ and $B(\tL)$ to show that there is a lattice isomorphism from the hereditary, saturated subsets of $\Lambda^{0}$ onto the basic, graded ideals of $\KP_R(\Lambda)$. This extends \cite[Theorem~5.1]{ACaHR} to locally convex, row-finite $k$-graphs.

\begin{lemma}\label{inversepi}
Let $\pi:\tL\to\iota(\Lambda)$ be the projection defined in Remark~\ref{rmk-structurelambda0}. Then $H\mapsto  \pi(H)$ is a lattice isomorphism of the hereditary, saturated subsets of $\tL^0$ onto the hereditary, saturated subsets of $\iota(\Lambda^0)$.  
\end{lemma}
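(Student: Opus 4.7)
The plan is to exhibit an explicit inverse $\Psi: H' \mapsto \pi^{-1}(H') \cap \tL^0$ to $\Phi: H \mapsto \pi(H)$, verify that both $\Phi$ and $\Psi$ preserve hereditary-saturatedness, and check that they are mutually inverse and inclusion-preserving. Since inclusion-preservation is immediate, this gives a lattice isomorphism. Mutual inversion is the easy direction: since $\pi$ fixes $\iota(\Lambda^0)$ pointwise, $\pi(H) = H \cap \iota(\Lambda^0)$ for any $H \subseteq \tL^0$; if $H$ is moreover hereditary and saturated, Lemma~\ref{lem:sat_pi} gives $v \in H \Leftrightarrow \pi(v) \in H \cap \iota(\Lambda^0) = \pi(H)$, so $H = \Psi(\Phi(H))$. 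A direct computation shows $\Phi(\Psi(H')) = H'$ for any $H' \subseteq \iota(\Lambda^0)$.

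For $\Phi$ to be well-defined, the hereditary property of $\pi(H) = H \cap \iota(\Lambda^0)$ in $\iota(\Lambda)$ follows because $\iota(\Lambda) \subseteq \tL$ and $H$ is hereditary in $\tL^0$. For saturation, I would take $v = \iota(w) \in \iota(\Lambda^0)$ with $s(v\iota(\Lambda)^{\leq e_i}) \subseteq \pi(H)$ and split on whether $w\Lambda^{e_i}$ is empty: if so, $v \in s(v\iota(\Lambda)^{\leq e_i}) \subseteq \pi(H)$ directly; if not, Lemmas~\ref{lem:0dlambda} and~\ref{lem:extending_bd_paths} together with the boundary-path condition~\eqref{eq:boundarypath} identify $v\tL^{e_i}$ with $\iota(w\Lambda^{e_i})$, so that $s(v\tL^{e_i}) \subseteq H$ and saturation of $H$ in $\tL^0$ yields $v \in \pi(H)$.

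Well-definedness of $\Psi$ is dual. Being hereditary follows from $\pi$ commuting with $r$ and $s$: if $v \in \Psi(H')$ and $\lambda \in v\tL$, then $\pi(s(\lambda)) = s(\pi(\lambda)) \in H'$ by the hereditary property of $H'$. For saturation of $\Psi(H')$---reduced, since $\tL$ has no sources, to ``$s(v\tL^{e_i}) \subseteq \Psi(H')$ for some $i$ implies $v \in \Psi(H')$''---write $v = [z;m]$ and $w = z(m \wedge d(z))$, so $\pi(v) = \iota(w)$, and split on the relation between $m_i$ and $d(z)_i$. When $d(z)_i \leq m_i$, the boundary-path condition~\eqref{eq:boundarypath} forces every representative $z'$ of $v$ to satisfy $d(z')_i = d(z)_i$, hence every $\mu \in v\tL^{e_i}$ projects to $\pi(s(\mu)) = \pi(v)$; nonemptiness of $v\tL^{e_i}$ then forces $\pi(v) \in H'$. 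When $d(z)_i > m_i$, $w\Lambda^{e_i}$ is nonempty, and by choosing representatives $z'$ appropriately one realizes every edge in $w\Lambda^{e_i}$ as some $\pi(\mu)$ with $\mu \in v\tL^{e_i}$; the hypothesis then translates to $s(\iota(w)\iota(\Lambda)^{\leq e_i}) \subseteq H'$, and saturation of $H'$ in $\iota(\Lambda^0)$ delivers $\pi(v) \in H'$.

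The main obstacle is this last case: for each $\alpha \in w\Lambda^{e_i}$, I need to construct a boundary-path representative $z'$ of $v$ with $z'(m \wedge d(z), m \wedge d(z) + e_i) = \alpha$ and the correct degree profile to satisfy both the vertex equivalences~(V1)--(V2) for $v$ and the path equivalences~(P1)--(P3) for $\mu := [z';(m, m+e_i)]$. This is a routine but notationally involved exercise in manipulating boundary-path representatives.
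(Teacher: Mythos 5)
Your proposal follows essentially the same route as the paper: the same map $H\mapsto\pi(H)$ with inverse $G\mapsto\pi^{-1}(G)$, Lemma~\ref{lem:sat_pi} doing the work of mutual inversion, and the same two-case argument for saturation of $\pi(H)$ (using that $v\tL^{e_i}=\iota(w\Lambda^{e_i})$ when $w\Lambda^{e_i}\neq\emptyset$). The only real difference is in the saturation of $\pi^{-1}(G)$: the paper compresses this into the asserted identity $s(\pi(v)\iota(\Lambda^{\leq e_i}))=\pi(s(v\tilde\Lambda^{e_i}))$ without proof, and that identity is exactly your case split plus the lifting step you defer. The deferred step is genuine but goes through more easily than you fear: write $v=[z;m]$, $w=z(m\wedge d(z))$, and suppose $d(z)_i>m_i$; for every coordinate $j$ with $m_j>d(z)_j$ the boundary condition \eqref{eq:boundarypath} gives $w\Lambda^{e_j}=\emptyset$, and the factorisation property then forces $s(\alpha)\Lambda^{e_j}=\emptyset$ for each $\alpha\in w\Lambda^{e_i}$ (an $e_j$-edge out of $s(\alpha)$ would, after refactoring $\alpha f$, produce one out of $w$); hence \emph{every} $y\in s(\alpha)\Lambda^{\leq\infty}$ automatically has $d(y)_j=0$ in those coordinates, so $z':=z(0,m\wedge d(z))\,\alpha\, y$ with $m':=m$ satisfies (V1)--(V2), and $\mu:=[z';(m,m+e_i)]\in v\tL^{e_i}$ has $\pi(\mu)=\iota(\alpha)$, as required.

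Two small corrections to intermediate claims. First, $\pi(H)=H\cap\iota(\Lambda^0)$ is false for arbitrary $H\subseteq\tL^0$ (only $\supseteq$ holds in general); it is true when $H$ is hereditary and saturated, by Lemma~\ref{lem:sat_pi}, which is the only case you use. Second, in the case $d(z)_i\leq m_i$ it is not true that every representative $(z';m')$ of $v$ satisfies $d(z')_i=d(z)_i$; what does hold is that $w\Lambda^{e_i}=\emptyset$ (boundary condition for $z$ at $m\wedge d(z)$), and since any representative $z'$ passes through $w$ at $m'\wedge d(z')$, this forces $d(z')_i\leq m'_i$, i.e.\ $(m'+e_i)\wedge d(z')=m'\wedge d(z')$, which is exactly what you need to conclude $\pi(s(\mu))=\pi(v)$ for every $\mu\in v\tL^{e_i}$. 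With these repairs your argument is correct and matches the paper's, while making explicit the representative-juggling the paper leaves implicit.
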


\begin{proof}
Let $H$ be a hereditary, saturated subset of $\tL^0$. To see that $\pi(H)$ is a hereditary subset of $\iota(\Lambda^0)$, let $v\in\pi(H)$ and suppose $\lambda\in v\iota(\Lambda)$. Since $r(\lambda)\in\pi(H)$ we have $r(\lambda)\in H$ by Lemma~\ref{lem:sat_pi}. Then  $s(\lambda)\in H$ because $H$ is hereditary. Since $\lambda\in\iota(\Lambda)$ and $\pi$ is a graph morphism, we have $s(\lambda)=s(\pi(\lambda))=\pi(s(\lambda))\in\pi(H)$. Thus $\pi(H)$ is hereditary.

To see that $\pi(H)$ is  saturated, let $v\in \iota(\Lambda^0)$, and suppose that 
$s(v\iota(\Lambda^{\leq e_i}))\subseteq \pi(H)$ for some $1\leq i\leq k$.  There are two cases. First, suppose that $v\Lambda^{\leq e_i}=\{v\}$. Then $\{v\}=s(v\iota(\Lambda^{\leq e_i}))\subseteq \pi(H)$ gives $v\in\pi(H)$ as required.  Second, suppose that $v\Lambda^{\leq e_i}=v\Lambda^{e_i}$. Then $v\iota(\Lambda^{\leq e_i})=v\tL^{e_i}$. Also $\pi(H)\subseteq H$ by Lemma~\ref{lem:sat_pi}. Thus $s(v\tilde\Lambda^{e_i})\subseteq H$, and since $H$ is saturated in $\tilde\Lambda^0$, we get $v\in H$.  Now $v=\pi(v)\in\pi(H)$ as required.  Thus $\pi(H)$ is a saturated subset of $\iota(\Lambda)$.

To see that $H\mapsto\pi(H)$ is injective, suppose $\pi(H)=\pi(K)$. Let $v\in H$. Then  $\pi(v)\in \pi(H)=\pi(K)$, and hence $v\in K$ by Lemma~\ref{lem:sat_pi}. Thus $H\subseteq K$, and the other set inclusion follows by symmetry. Thus $H=K$, and $H\mapsto\pi(H)$ is injective.

To see that $H\mapsto\pi(H)$ is onto, let $G$ be a hereditary, saturated subset of $\iota(\Lambda^0)$.  Since $\pi(\pi^{-1}(G))=G$, it suffices to show that $\pi^{-1}(G)$ is a hereditary, saturated subset of $\tilde\Lambda^0$. Let $v\in \pi^{-1}(G)$ and suppose $\lambda\in v\tL$. Then $r(\pi(\lambda))=\pi(r(\lambda))=\pi(v)\in G$.  Since $G$ is hereditary, $s(\pi(\lambda))=\pi(s(\lambda))\in G$.  Thus $s(\lambda)\in \pi^{-1}(G)$, and hence $\pi^{-1}(G)$ is hereditary. To see $\pi^{-1}(G)$ is saturated, let $v\in\tL^0$, and suppose $s(v\tilde\Lambda^{e_i})\subseteq \pi^{-1}(G)$ for some $1\leq i\leq k$.  
Then
\[
s(\pi(v)\iota(\Lambda^{\leq e_i}))=s(\pi(v)\pi(\tilde\Lambda^{e_i}))=\pi(s(v\tilde\Lambda^{e_i}))\subseteq G.
\]
But $G$ is saturated in $\iota(\Lambda^0)$, and hence $\pi(v)\in G$. Now $v\in\pi^{-1}(G)$ as needed, and $\pi^{-1}(G)$ is saturated.  It follows that $H\mapsto\pi(H)$ is onto.

Finally, $H\mapsto\pi(H)$ is a lattice isomorphism because $H_1\subseteq H_2$ if and only if $\pi(H_1)\subseteq \pi(H_2)$. 
\end{proof}

The next lemma is a generalisation of \cite[Lemma~5.4]{ACaHR} to graphs with possible sources.

\begin{lemma}\label{ideal-gen}  Let $G$ be a hereditary, saturated subset of $\Lambda^0$, and $J_G$ be the ideal of $\KP_R(\Lambda)$ generated by $\{p_v:v\in G\}$. 
 Then 
 \begin{equation}J_G=\lsp\{s_\alpha s_{\beta^*}:\alpha,\beta\in\Lambda, s(\alpha)=s(\beta)\in G\}.\label{eq-is-ideal}\end{equation}
\end{lemma}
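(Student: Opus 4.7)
The plan is to prove the two set inclusions, noting that only the hereditary property of $G$ is needed (not saturatedness). Let $I$ denote the right-hand side of \eqref{eq-is-ideal}.

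For the inclusion $I\subseteq J_G$, I would observe that each spanning element $s_\alpha s_{\beta^*}$ with $s(\alpha)=s(\beta)=v\in G$ can be rewritten via (KP2) as $s_\alpha p_v s_{\beta^*}$, which lies in $J_G$ since $p_v\in J_G$.

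For the reverse inclusion, the strategy is to show that $I$ is itself a two-sided ideal of $\KP_R(\Lambda)$ containing $\{p_v:v\in G\}$; since $J_G$ is by definition the smallest such ideal, this will give $J_G\subseteq I$. The containment $p_v=s_vs_{v^*}\in I$ for each $v\in G$ is immediate. Because $\KP_R(\Lambda)=\lsp\{s_\lambda s_{\mu^*}:\lambda,\mu\in\Lambda\}$ by Corollary~\ref{cor-span}, it suffices to check that $I$ absorbs left and right multiplication by an arbitrary spanning element $s_\lambda s_{\mu^*}$. For the left-multiplication, Proposition~\ref{prop:lspfamily} (applied to the middle product $s_{\mu^*}s_\alpha$ with $n=d(\mu)\vee d(\alpha)$) rewrites
\[
s_\lambda s_{\mu^*}s_\alpha s_{\beta^*}=\sum_{(\sigma,\tau)} s_{\lambda\sigma}s_{(\beta\tau)^*},
\]
where the sum runs over pairs with $\mu\sigma=\alpha\tau\in\Lambda^{\leq n}$. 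Unique factorisation forces $s(\sigma)=s(\tau)$, and because $r(\tau)=s(\alpha)\in G$ while $G$ is hereditary, each $s(\sigma)=s(\tau)$ lies in $G$, so every summand belongs to $I$. The right-multiplication case is symmetric: applying Proposition~\ref{prop:lspfamily} to $s_{\beta^*}s_\lambda$ produces terms $s_{\alpha\sigma}s_{(\mu\tau)^*}$ with $r(\sigma)=s(\beta)\in G$, and hereditariness again places the common source in $G$.

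The only obstacle is minor bookkeeping: checking that compositions such as $\lambda\sigma$ and $\beta\tau$ are well-defined (which follows from $s(\lambda)=s(\mu)=r(\sigma)$ forced by (KP1)-(KP2), and analogously on the other side), and confirming that the common-source condition is satisfied in each summand. No deeper argument is needed; the result is essentially a direct application of Proposition~\ref{prop:lspfamily} combined with the definition of hereditariness.
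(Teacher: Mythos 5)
Your argument is correct and follows essentially the same route as the paper: both show the spanning set lies in $J_G$ via $s_\alpha s_{\beta^*}=s_\alpha p_{s(\alpha)}s_{\beta^*}$, and both establish the reverse inclusion by proving the right-hand side is an ideal containing the $p_v$, using the minimal-common-extension expansion (the paper quotes Corollary~\ref{cor-altKP3'}, which is just Proposition~\ref{prop:lspfamily} with $n=d(\mu)\vee d(\alpha)$) together with hereditariness of $G$. Your observation that saturatedness is not needed also matches the paper's proof, which likewise only invokes the hereditary property.
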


\begin{proof} Denote the right-hand side of \eqref{eq-is-ideal} by $J$. For $v\in G$, taking $\alpha=\beta=v$ shows $p_v\in J$. Thus $J_G$ is contained in the ideal generated by $J$.   But each $s_\alpha s_{\beta^*}\in J$ is in $J_G$ because $s_\alpha s_{\beta^*}=s_\alpha p_{s(\alpha)}s_{\beta^*}$.  Thus \eqref{eq-is-ideal}  will follow if  $J$ is an ideal.  To see this, let $s_\mu s_{\nu^*}\in\KP_R(\Lambda)$ and $s_\alpha s_{\beta^*}\in J$ such that $s_\mu s_{\nu^*}s_\alpha s_{\beta^*}\neq 0$. Then $r(\alpha)=r(\nu)$ and by Corollary~\ref{cor-altKP3'}, $s_\mu s_{\nu^*}s_\alpha s_{\beta^*}=\sum_{(\gamma,\delta)\in\Lambda^{\min}(\nu,\alpha)}s_{\mu\gamma}s_{(\beta\delta)^*}$.  For any nonzero summand, $r(\delta)=s(\alpha)\in H$ implies $s(\delta)=s(\gamma)\in G$ since $G$ is hereditary.  Thus each $s_{\mu\gamma}s_{(\beta\delta)^*}\in J$, and hence $s_\mu s_{\nu^*}s_\alpha s_{\beta^*}\in J$ as well.  Similarly $s_\alpha s_{\beta^*}s_\mu s_{\nu^*}\in J$, and it follows that $J$ is an ideal. 
\end{proof}

\begin{prop}\label{MOrita-graded}  
Let  $L$ be 
the lattice isomorphism from the ideals of $\KP_R(\tL)$ to the ideals of 
$B(\tilde\Lambda)$ induced by the Morita context of   Theorem~\ref{thm:m_context}. 
\begin{enumerate} \item\label{MOrita-graded-a} Then $I$ is a graded ideal of $\KP_R(\tL)$ if and only if $L(I)$ is a graded ideal of  $B(\tilde\Lambda)$.
\item\label{MOrita-graded-b} Let $H$ be a hereditary, saturated subset of $\tL^0$. Then
$L(I_H)=J_{\pi(H)}.$
\end{enumerate}
\end{prop}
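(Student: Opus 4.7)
My plan is to handle the two parts separately, relying heavily on the homogeneity of the bimodules $M$ and $N$. For part~\eqref{MOrita-graded-a}, the first thing I would observe is that $M$ and $N$ are $\Z^k$-graded $R$-submodules of $\KP_R(\tL)$, since each spanning element $t_\lambda t_{\mu^*}$ is homogeneous of degree $d(\lambda)-d(\mu)$. Given a graded ideal $I$, decomposing elements of $N$, $I$ and $M$ into homogeneous components shows that $L(I)=NIM$ is spanned by homogeneous elements and hence is graded. For the converse, I would point out that since both $\KP_R(\tL)$ and $B(\tilde\Lambda)$ admit local units built from finite sums of vertex projections, $\KP_R(\tL)\cdot I\cdot\KP_R(\tL)=I$, which forces the inverse of $L$ to be $J\mapsto MJN$; the same homogeneity argument then gives that $L(I)$ graded implies $I=ML(I)N$ is graded.

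For the first inclusion in part~\eqref{MOrita-graded-b}, given $v\in H$ written as $v=[z;m]$, I would work with the path $\lambda:=[z;(m\wedge d(z),m)]\in\tL$, which runs from $\pi(v)$ to $v$ and satisfies $\pi(\lambda)=\pi(v)$. Since $r(\lambda)=\pi(v)\in\iota(\Lambda^0)$, we have $t_\lambda\in N$ and $t_{\lambda^*}\in M$; Lemma~\ref{lem:Claire_13_14} gives $\pi(v)\tL^{d(\lambda)}=\{\lambda\}$, and so (KP4) in $\tL$ yields $q_{\pi(v)}=t_\lambda t_{\lambda^*}$. Consequently
\[
q_{\pi(v)} = t_\lambda q_v t_{\lambda^*} \in N\cdot I_H\cdot M = L(I_H),
\]
and since $L(I_H)$ is an ideal containing every generator of $J_{\pi(H)}$, the inclusion $J_{\pi(H)}\subseteq L(I_H)$ follows.

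The reverse inclusion is where I expect the bulk of the work. Starting from $I_H=\lsp\{t_\alpha t_{\beta^*}:s(\alpha)=s(\beta)\in H\}$ (Lemma~\ref{ideal-gen} applied in $\KP_R(\tL)$, which has no sources), a typical spanning element of $NI_HM$ is $(t_\gamma t_{\delta^*})(t_\alpha t_{\beta^*})(t_\epsilon t_{\phi^*})$ with $r(\gamma),r(\phi)\in\iota(\Lambda^0)$ and $s(\alpha)=s(\beta)\in H$. Two applications of Proposition~\ref{prop:lspfamily}---first to $t_{\delta^*}t_\alpha$, then to $t_{\beta^*}t_\epsilon$---rewrite this as a sum of terms of the form $t_{\gamma\sigma\sigma'}t_{(\phi\tau')^*}$, where the intermediate paths come from minimal common extensions. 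Hereditariness of $H$, traced through the chain $r(\tau)=s(\alpha)\in H\Rightarrow s(\tau)=r(\sigma')\in H\Rightarrow s(\sigma')=s(\tau')\in H$, ensures that $s(\gamma\sigma\sigma')=s(\phi\tau')\in H$ in every nonzero summand. Because the ranges $r(\gamma\sigma\sigma'),r(\phi\tau')\in\iota(\Lambda^0)$, the identity $t_\mu t_{\nu^*}=t_{\pi(\mu)}t_{\pi(\nu)^*}$ from the proof of Proposition~\ref{lem:KPiotaL} rewrites each such term inside $B(\tilde\Lambda)$ with $s(\pi(\mu))=\pi(s(\mu))\in\pi(H)$, and Lemma~\ref{ideal-gen} applied in $\KP_R(\Lambda)$ via the iso of Proposition~\ref{prop:Claire_Prop_11''} then places it in $J_{\pi(H)}$. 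The main obstacle I anticipate is this hereditariness bookkeeping---tracking sources through the two applications of Proposition~\ref{prop:lspfamily} and the collapse via $\pi$---though each individual step is routine.
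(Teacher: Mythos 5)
Your proposal is correct and follows essentially the same route as the paper: for part~\eqref{MOrita-graded-a} homogeneous decomposition of $N$, $I$, $M$ together with the inverse map $J\mapsto MJN$, and for part~\eqref{MOrita-graded-b} expanding $NI_HM$ via Proposition~\ref{prop:lspfamily}, tracking sources through hereditariness of $H$, collapsing with the identity $t_\mu t_{\nu^*}=t_{\pi(\mu)}t_{\pi(\nu)^*}$ and invoking Lemma~\ref{ideal-gen}. The only cosmetic differences are that you obtain $J_{\pi(H)}\subseteq L(I_H)$ by checking generators (the paper gets it as the trivial inclusion in its span computation, which it carries out with three rather than two applications of the expansion), and your second application of Proposition~\ref{prop:lspfamily} is really to $t_{(\beta\tau)^*}t_\epsilon$ rather than to $t_{\beta^*}t_\epsilon$, as your own hereditariness chain $r(\sigma')=s(\beta\tau)=s(\tau)$ already indicates.
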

\begin{proof}
\eqref{MOrita-graded-a} Here $\KP_R(\tilde\Lambda)$ and $B(\tL)$ are graded by the subgroups
\begin{gather*}\KP_R(\tilde\Lambda)_j:= \lsp\{t_\alpha t_{\beta^{*}}: \alpha,\beta\in\tL, d(\alpha)-d(\beta)=j\}\text{\ and }\\
B(\tL)_j:= \lsp\{t_\alpha t_{\beta^{*}}: \alpha,\beta\in\iota(\Lambda), d(\alpha)-d(\beta)=j\}=B(\tL)\cap \KP_R(\tilde\Lambda)_j,
\end{gather*}
respectively. Let  $m\in M$ and $n\in N$. Since $M$ and $N$ are submodules of $\KP_R(\tilde\Lambda)$,  we can write  $m=\sum m_i$ and $n=\sum n_j$ where each $m_j, n_j\in \KP_R(\tilde\Lambda)_j$.  

Now suppose that $I$ is a graded ideal of $\KP_R(\tilde\Lambda)$.  To check that $L(I)=NIM$ is graded it suffices to check that every element of $L(I)$ is a sum of elements in $\cup_j(L(I)\cap B(\tL)_j)$. Let $x\in I$, and write $x=\sum x_l$ where each $x_l\in I\cap \KP_R(\tilde\Lambda)_l$.  For an  element $y=nxm\in L(I)$ we have
$
y=\sum_{i,j,l} m_ix_ln_j;
$
each $m_ix_ln_j\in I\cap \KP_R(\tilde\Lambda)_{i+j+l}= I\cap B(\tL)_{i+j+l}$. Thus $L(I)$ is a graded ideal of $B(\tL)$.  The other direction follows in the same way.

\eqref{MOrita-graded-b}  By \cite[Lemma~5.4]{ACaHR}, 
$
I_H=\lsp\{t_\alpha t_{\beta^{*}}: \alpha,\beta\in\tL, s(\alpha)=s(\beta)\in H\}
$, and then
\begin{align}
L(I_H)&=NI_HM\notag\\
&=\lsp\{t_\lambda t_{\mu^*}t_\alpha t_{\beta^{*}}t_\sigma t_{\tau^{*}}: \lambda, \mu, \alpha,\beta, \sigma,\tau\in\tL,s(\alpha)=s(\beta)\in H, r(\lambda),r(\tau)\in\iota(\Lambda^0)\}.\label{eq-mess}
\end{align}
Consider a term $t_\lambda t_{\mu^*}t_\alpha t_{\beta^{*}}t_\sigma t_{\tau^{*}}$ as in \eqref{eq-mess}.  By three applications of Corollary~\ref{cor-altKP3'},
\begin{align*}
t_\lambda (t_{\mu^*}t_\alpha)( t_{\beta^{*}}t_\sigma) t_{\tau^{*}}&=\sum_{(\gamma,\delta)\in\Lambda^{\min}(\mu,\alpha)}
\ \sum_{(\xi,\eta)\in\Lambda^{\min}(\beta,\sigma)} t_\lambda t_\gamma (t_{\delta^*}t_\xi) t_{\eta^*}t_{\tau^{*}}\\
&=\sum_{(\gamma,\delta)\in\Lambda^{\min}(\mu,\alpha)}
\ \sum_{(\xi,\eta)\in\Lambda^{\min}(\beta,\sigma)} \ \sum_{(\rho,\epsilon)\in\Lambda^{\min}(\delta,\xi)}t_\lambda t_\gamma t_{\rho}t_{\epsilon^*} t_{\eta^*}t_{\tau^{*}}
\end{align*}
For each summand   $t_\lambda t_\gamma t_{\rho}t_{\epsilon^*} t_{\eta^*}t_{\tau^{*}}$, we have $r(\delta\rho)=r(\xi\epsilon)=s(\alpha)\in H$. Since $H$ is hereditary, $s(\rho)=s(\epsilon)\in H$. Thus, from \eqref{eq-mess}, 
\[
L(I_H)=\lsp\{t_\lambda t_{\tau^*}:\lambda,\tau\in\tL, r(\lambda),r(\tau)\in\iota(\Lambda^0), s(\lambda)=s(\tau)\in H\}
\]
(the above shows the inclusion $\subseteq$, and the reverse inclusion is trivial).  But $B(\tilde\Lambda)$ 
is the subalgebra  of $\KP_R(\tilde{\Lambda})$ generated by $\{q_{\iota(v)}, t_{\iota(\mu)}, t_{\iota(\nu)^*}:v\in\Lambda^0, \lambda,\mu\in\Lambda\}$. Thus 
\[
L(I_H)=\lsp\{t_\lambda t_{\tau^*}:\lambda,\tau\in\iota(\Lambda), s(\lambda)=s(\tau)\in H\cap \iota(\Lambda^0)\}.
\]
Using Lemma~\ref{lem:sat_pi}, we have $H\cap \iota(\Lambda^0)=\pi(H)$, and by Lemma~\ref{inversepi}, $\pi(H)$ is a hereditary subset of $\iota(\Lambda^0)$.  Thus $L(I_H)=J_{\pi(H)}$ by Lemma~\ref{ideal-gen}.
\end{proof}

\begin{thm}\label{thm-ideals}
Let $\Lambda$ be a locally convex, row-finite $k$-graph. Then $G\mapsto J_G$ is a lattice isomorphism from the hereditary, saturated subsets of $\Lambda^0$ to the basic graded ideals of $\KP_R(\Lambda)$.
\end{thm}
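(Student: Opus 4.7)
The plan is to assemble the desired lattice isomorphism by composing several already-established bijections: the lattice isomorphism for the source-free graph $\tL$ from \cite[Theorem~5.1]{ACaHR}, the Morita-induced correspondence $L$ on ideals, the algebra isomorphism $\Phi:=\pi_{q\circ\iota,t\circ\iota}$ from $\KP_R(\Lambda)$ onto $B(\tL)$, and the $\pi$-correspondence of Lemma~\ref{inversepi}.

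Explicitly, since $\tL$ is row-finite without sources, \cite[Theorem~5.1]{ACaHR} gives a lattice isomorphism $H\mapsto I_H$ from the hereditary, saturated subsets of $\tL^0$ onto the basic graded ideals of $\KP_R(\tL)$. By Propositions~\ref{MOrita-graded}(a) and~\ref{prop:basicideals}, the map $L$ restricts to a lattice isomorphism from the basic graded ideals of $\KP_R(\tL)$ onto the basic graded ideals of $B(\tL)$. The graded isomorphism $\Phi:\KP_R(\Lambda)\to B(\tL)$ of Proposition~\ref{prop:Claire_Prop_11''} sends $p_v$ to $q_{\iota(v)}$ and so induces a lattice isomorphism from the basic graded ideals of $\KP_R(\Lambda)$ onto those of $B(\tL)$. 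Finally, $\iota:\Lambda\to\iota(\Lambda)$ is a $k$-graph isomorphism, so combining this with the inverse of Lemma~\ref{inversepi} yields a lattice isomorphism $G\mapsto \pi^{-1}(\iota(G))$ from the hereditary, saturated subsets of $\Lambda^0$ onto those of $\tL^0$.

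Composing these gives a lattice isomorphism from the hereditary, saturated subsets of $\Lambda^0$ onto the basic graded ideals of $\KP_R(\Lambda)$, and it remains only to verify that the composition equals $G\mapsto J_G$. Given a hereditary, saturated $G\subseteq\Lambda^0$, set $H:=\pi^{-1}(\iota(G))$, so that $\pi(H)=\iota(G)$; then by Proposition~\ref{MOrita-graded}(b) we have $L(I_H)=J_{\pi(H)}=J_{\iota(G)}$, which is the ideal of $B(\tL)$ generated by $\{q_{\iota(v)}:v\in G\}$. Pulling back through $\Phi^{-1}$ yields the ideal of $\KP_R(\Lambda)$ generated by $\{p_v:v\in G\}$, namely $J_G$. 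The main obstacle is the careful bookkeeping required to track generators through each of the four bijections and to confirm that the chain of correspondences genuinely composes to the map $G\mapsto J_G$; no new ideas are needed beyond the propositions and lemmas already established.
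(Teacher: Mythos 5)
Your proposal is correct and follows essentially the same route as the paper: both compose the lattice isomorphism $H\mapsto I_H$ for the source-free graph $\tL$ from \cite[Theorem~5.1]{ACaHR}, the Morita-induced correspondence $L$ (using Propositions~\ref{prop:basicideals} and~\ref{MOrita-graded}, with part~(b) identifying $L(I_H)=J_{\pi(H)}$), the graded isomorphism of Proposition~\ref{prop:Claire_Prop_11''}, and the correspondence of Lemma~\ref{inversepi} to obtain $G\mapsto J_G$. Your bookkeeping with $\iota$ (writing $\pi^{-1}(\iota(G))$) is just a slightly more explicit version of the paper's chain $G\mapsto\pi^{-1}(G)\mapsto I_{\pi^{-1}(G)}\mapsto J_{\iota(G)}\mapsto J_G$.
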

\begin{proof}
 Since $\tilde\Lambda$ is row-finite without sources, $H\mapsto I_H$ is a lattice isomorphism from the hereditary, saturated subsets of $\tL^0$ onto the basic, graded ideals of $\KP_R(\tL)$ by \cite[Theorem~5.1]{ACaHR}.   By Theorem~\ref{thm:m_context}, $\KP_R(\tL)$ is Morita equivalent to its subalgebra $B(\tilde\Lambda)$, and the induced lattice isomorphism  sends $I_H$ to $J_{\pi(H)}$ by Proposition~\ref{MOrita-graded}\eqref{MOrita-graded-b}.  By  Proposition~\ref{prop:basicideals} and Proposition~\ref{MOrita-graded}\eqref{MOrita-graded-a}, respectively, the Morita equivalence preserves basic, graded ideals.  Thus $I_H\mapsto J_{\pi(H)}$  maps onto the basic, graded ideals of $B(\tilde\Lambda)$.
 The canonical isomorphism of $\KP_R(\Lambda)$ onto $B(\tilde\Lambda)$ of Proposition~\ref{prop:Claire_Prop_11''} is graded, and hence the ideal $J_{\iota^{-1}(\pi(H))}$ of  $\KP_R(\Lambda)$ corresponding to $J_{\pi(H)}$ is basic and graded, and all basic, graded ideals of $\KP_R(\Lambda)$ arise this way.
 Composing with the lattice isomorphism $G\mapsto \pi^{-1}(G)$ from the hereditary, saturated subsets of $\Lambda^0$  to those of $\tL^0$ of Lemma~\ref{inversepi} gives the lattice isomorphism 
 \[
 G\mapsto \pi^{-1}(G)\mapsto I_{\pi^{-1}(G)}\mapsto J_{\iota(G)}\mapsto J_G.\qedhere
 \]
\end{proof}

\end{document}